\numberwithin{equation}{section}
\let\originalleft\left
\let\originalright\right
\renewcommand{\left}{\mathopen{}\mathclose\bgroup\originalleft}
\renewcommand{\right}{\aftergroup\egroup\originalright}
\newlength{\bibitemsep}
\newlength{\bibparskip}\setlength{\bibparskip}{0pt}
\let\oldthebibliography\thebibliography
\renewcommand\thebibliography[1]{\oldthebibliography{#1}
	\setlength{\parskip}{\bibitemsep}
	\setlength{\itemsep}{\bibparskip}}
\DeclareMathOperator{\Tr}{Tr}
\DeclareMathOperator{\supp}{supp}
\DeclareMathOperator{\Arg}{Arg}
\renewcommand{\epsilon}{\varepsilon}
\renewcommand{\leq}{\leqslant}
\renewcommand{\geq}{\geqslant}
\newcommand{\di}{\textrm{d}}
\renewcommand{\P}{\mathbb{P}}
\newcommand{\R}{\mathbb{R}}
\newcommand{\C}{\mathbb{C}}
\newcommand{\N}{\mathbb{N}}
\newcommand{\Z}{\mathbb{Z}}
\newcommand{\cP}{\mathcal{P}}
\renewcommand{\ln}{\log}
\def\x{\mathbf{x}}
\def\y{\mathbf{y}}
\def\a{\mathbf{a}}
\def\xm{x_{\max}}
\newcommand{\vertiii}[1]{{\left\vert\kern-0.25ex\left\vert\kern-0.25ex\left\vert #1 
		\right\vert\kern-0.25ex\right\vert\kern-0.25ex\right\vert}}
\newcommand{\wt}[1]{\widetilde{#1}}
\theoremstyle{plain} 
\newtheorem{theorem}{Theorem}[section]
\newtheorem{lem}[theorem]{Lemma}
\newtheorem{cor}[theorem]{Corollary}
\newtheorem{proposition}[theorem]{Proposition}
\newtheorem*{theorem*}{Theorem}
\newtheorem{definition}[theorem]{Definition}
\newtheorem{remark}[theorem]{Remark}
\theoremstyle{definition}
\newtheorem{assump}{Assumption}
\newtheorem{rhp}[theorem]{RHP}
\newtheorem{problem}[theorem]{Model Problem}
\setlist[enumerate]{ align=left}
\author{Jonathan Husson\footnote{LMBP, Université Clermont Auvergne, 3 place Vasarely, 63178 Aubière, France. jonathan.husson@uca.fr}, Guido Mazzuca\footnote{Department of mathematics, Tulane University, 6823 St Charles Ave, New Orleans, LA 70118, USA. gmazzuca@tulane.edu}, \& Alessandra Occelli\footnote{LAREMA, Université d'Angers, 2 Boulevard Lavoisier, 49045 Angers, France. alessandra.occelli@univ-angers.fr}}
\title{Discrete and Continuous Muttalib--Borodin Process: Large Deviations and Limit Shape Analysis}
\date{\today}
\begin{document}
	\maketitle
    
\abstract{In this paper, we study the asymptotic behaviour of plane partitions distributed according to a $q^{\text{Volume}}$-weighted Muttalib--Borodin ensemble and its associated discrete point process. We establish a Large Deviation Principle for the process, explicitly characterizing the rate function. A defining feature of our model is the emergence of a strict upper bound on the macroscopic particle density, which translates the asymptotic analysis into a non-trivial constrained minimization problem. Through a rigorous Riemann--Hilbert analysis, we derive exact, closed-form formulas for the limit shape of the partitions across all parameter regimes. To the best of our knowledge, this represents the first time a constrained Riemann--Hilbert problem has been formulated and analytically solved for a bi-orthogonal ensemble. Our analysis allows to track the system through a macroscopic phase transition, computing the minimizer in both the subcritical and supercritical regimes. As a byproduct of our analysis, we obtain an explicit expression for the arctic curve that  separates the ``frozen'' and ``liquid'' regions of the limit shape. Furthermore, we reveal that the equilibrium measure exhibits a continuously varying exponent at the hard edge departing from the universal fixed exponents typically observed in classical random matrix theory.}

\section{Introduction}

In this paper, we  study the asymptotic behaviour of plane partitions, in particular we show that they satisfy a large deviation principle and we obtain an explicit expression of their asymptotic shape. We achieve that by considering the related discrete point process, called Muttalib--Borodin process, which is a particular bi-orthogonal ensemble. The main technical tool we use is the Riemann--Hilbert analysis, which allows us to solve a constrained minimization problem and to obtain an explicit expression for the limit shape of the plane partitions.

Plane partitions are ubiquitous in mathematics; they are not just a central object in combinatorics, but they also have several connections with the theory of integrable systems, random matrices and large deviations.
To study the asymptotic behaviour of integer partitions, one can usually consider a measure on these partitions, a natural one being the Plancherel measure; the typical goal is to understand their limit shape and the fluctuations around this limit. A milestone in the field was reached in 1999, when Baik, Deift and Johansson \cite{Baik1999} proved that the fluctuations of the length of the longest increasing subsequence in a random permutation (the same as the first entry of a uniformly distributed integer partition) are described by the GUE Tracy--Widom distribution \cite{TracyWidom}. 

Bi-orthogonal ensembles have been recently applied  to study the same phenomena for plane partitions \cite{Borodin1998,mut95, OR03}.
They arise as a natural extension of the well-studied orthogonal ensembles in mathematical physics and random matrix theory. Bi-orthogonal ensembles generalize this framework while maintaining some of its appealing features. They inherit the determinantal structure of the correlation functions, but, instead of being expressed through orthogonal polynomials, the kernels are constructed using bi-orthogonal polynomials \cite{Konhauser}. 
The determinantal structure of bi-orthogonal ensembles was first rigorously proved by Muttalib \cite{mut95}; he also introduced these ensembles in the context of random matrix theory and remarked their interest in physics. Some of their applications include: (modeling eigenvalues statistics of) disordered systems, such as systems with non-Hermitian Hamiltonians; interacting particle systems with less rigid symmetry constraints; in the context of quantum transport theory, they model transport properties of systems with correlated random potentials.
The main difference between bi-orthogonal ensembles and the classical orthogonal ones is that the first lack of a simple explicit Christoffel--Darboux formula for the bi-orthogonal polynomials.

\medskip

 Our contributions extend this picture to discrete and continuous Muttalib--Borodin processes, a particular bi-orthogonal ensembles arising from a weighted version of $q^{\text{Volume}}$-measure on plane partitions, with key findings including:
\begin{itemize}
    \item \textit{Large deviations principle (LDP)}: We establish a LDP for the discrete Muttalib--Borodin process, characterizing the rate function and identifying the minimizer under various regimes.
    \item \textit{Innovative Riemann--Hilbert problem (RHP) Analysis}: We address technical challenges in solving a constrained RHP, which is pivotal for understanding discrete bi-orthogonal ensemble.
    \item \textit{Novel limit shape analysis}: By relating the discrete Muttalib–Borodin process to plane partitions, we derive limit shapes under different parameter regimes, see Figure \ref{fig:plot_shape_partition}. A key finding is the characterization of the behavior near zero: unlike the fixed density exponents typically seen in random matrix ensembles, the exponent here varies across a wide range of values.
    \item \textit{Identification of the arctic curve}: We obtain an explicit expression for the \textit{arctic curve} of the discrete Muttalib--Borodin process; this curve naturally separate the point process in two region: a \textit{frozen region}, where the particle are as dense as possible, and a \textit{liquid region}, where the particles are ``free'' to move, see Figure \ref{fig:plot_density_beta_0}.
\end{itemize}
These results highlight the interplay between the geometry of plane partitions and the probabilistic properties of the Muttalib--Borodin process. Furthermore, our work suggests potential extensions, including fluctuation studies around the limit shape and large gap asymptotic.

\medskip
	We adopt the language of (plane and ordinary) partitions to state our results. We often encounter the q-Pochhammer symbols \cite[Ch. 5]{NIST:DLMF} of length $n\in\N\cup\{\infty\}$, defined as:
\begin{equation}
(x; q)_n=\prod_{0\leq i< n}(1-xq^i),
\end{equation}
where $q\in[0, 1)$.
\paragraph{Plane Partitions.} A plane partition $\Lambda$ is a matrix $(\Lambda_{i,j})_{1 \leq i \leq M \,1 \leq j \leq N}$ of non-negative integers satisfying the conditions:
$$\Lambda_{i,j} \geq \Lambda_{i,j+1} \quad \text{and} \quad \Lambda_{i,j} \geq \Lambda_{i+1,j}.$$
This arrangement can be visualized as stacks of cubes in a three-dimensional space, where the array corresponds to the number of cubes placed at each coordinate point of an $M\times N$ rectangular base. Plane partitions have applications in combinatorics, statistical mechanics, and representation theory.

They are are equivalently described by a sequence of interlacing integer partitions:
$$\{\lambda^{(t)}\}_{t=-M+1}^{N-1}\,: \quad \lambda^{(t-1)} \prec \lambda^{(t)}, \text{as } t\geq0, \quad \lambda^{(t)} \prec \lambda^{(t-1)}, \text{as } t<0,$$
where the interlacing condition $\lambda^{(s)} \prec \lambda^{(t)}$ means $\quad \lambda_1^{(s)} \geq \lambda_1^{(t)} \geq \lambda_2^{(s)} \geq \lambda_2^{(t)} \geq \cdots$.
This representation connects plane partitions with lozenge tilings, Schur functions, and determinantal point processes \cite{Gorin2021,Borodin2019}.

Given real parameters $ a\geq 0, 1>q$ and $\eta, \theta \geq 0$, we consider the following weight associated with a plane partition $\Lambda$:
\begin{equation}\label{eq:MBdist}\P(\Lambda)\propto  \left(aq^{\frac{\eta+\theta}{2}}\right)^\text{CentralVol} q^{\eta \cdot \text{LeftVol} + \theta \cdot \text{RightVol}},\end{equation}
where LeftVol, CentralVol, and RightVol represent the volumes of cubes in different regions of the plane partition. These weights are linked to $q$-deformations\cite{Macdonald} and the combinatorial geometry of partitions.
\vspace{1.5em}

\paragraph{Muttalib--Borodin Ensembles.} The Muttalib--Borodin ensemble (MBE) generalizes $\beta$-ensembles by introducing an additional interaction parameter $\theta>0$: the interacting potential $\Delta(x)$ is replaced by $\Delta(x)\Delta(x^\theta)$. The ensemble generated from the measure $\eqref{eq:MBdist}$ represents a slight generalization introducing two parameters $\eta,\theta>0$; one can think of it as a system with two-particle interactions, one between type $x^\eta_i$'s, one between type $x_i^\theta$'s. The probability density for $L_t$ points $0<x_1<\dots<x_{L_t}$ is given by:
\begin{equation}
\label{eq:MB_ensemble}
			\P(\x^{(t)} = \x) dx_1 \dots dx_{L_t}= \frac{1}{Z_c} \prod_{1 \leq i < j  \leq L_t} (x_j^\eta-x_i^\eta) (x_j^\theta-x_i^\theta) \prod_{1 \leq i \leq L_t} w_c(x_i) dx_i
\end{equation}
where $w_c(x)$ is a potential, and $Z_c$ is a normalization constant; $L_t$ is the length of the partition at time $t$. The interaction term  distinguishes MBEs from classical $\beta$-ensembles, making them suitable for modeling disordered conductors. 
The ensemble $(x(t))_t$ can be constructed as scaling limit of a discrete measure on plane partitions, called discrete Muttalib--Borodin processes (see \cite{fr05,BetOcc}).
These processes (MBPs) arise naturally in the study of plane partitions: each time slice of a plane partition corresponds to a discrete MBE, described by
\begin{equation}\label{eq:MBdistribution}
			\P(l^{(t)} = l) =\frac{1}{Z_d}\prod_{1 \leq i < j  \leq L_t} (Q^{l_j}-Q^{l_i}) (\tilde{Q}^{l_j} - \tilde{Q}^{l_i}) \prod_{1 \leq i \leq L_t} w_d(l_i)
\end{equation}
where $Z_d=\prod_{1\leq i\leq M}\prod_{1\leq j\leq N}(1-aQ^{i-\frac12}\tilde Q^{j-\frac12})^{-1}$ is the partition function and $Q = q^{\eta}$, $\tilde{Q} = q^{\theta}$ and $w_d(l_i)$ represent discrete weights derived from the volume contributions of the partitions, with
\begin{equation}\label{eq:weights_d}
			w_d(x) = \begin{cases}
				a^{x} (Q \tilde{Q})^{\frac{x}{2}} Q^{|t|{x}} (\tilde{Q}^{x-|t|+1}; \tilde{Q})_{N-(M-|t|)} & \text{if } t \leq 0, \\
				a^{x} (Q \tilde{Q})^{\frac{x}{2}} \tilde{Q}^{t{x}} (\tilde{Q}^{x+1}; \tilde{Q})_{N-t-M} & \text{if } t > 0 \text{ and } N-t \geq M, \\
				a^{x} (Q \tilde{Q})^{\frac{x}{2}} \tilde{Q}^{t{x}} (Q^{x+N-t-M+1}; Q)_{M-(N-t)} & \text{if } t > 0 \text{ and } N-t < M.
			\end{cases}
\end{equation}
In the limit 
\begin{equation}\label{eq:scaling}
q=e^{-\epsilon},\qquad a=e^{-\alpha\epsilon},\qquad \lambda_i(t)=-\frac{\ln x_i(t)}{\epsilon},\qquad\epsilon\to0+,\end{equation}
the discrete-space Muttalib–Borodin process $(l(t))_{-M+1\leq t\leq N-1}$ converges, in the sense of weak convergence of
finite dimensional distributions, to the process $(x(t))_{-M+1\leq t\leq N-1}$ supported in [0, 1].

As already observed by Muttalib~\cite{mut95}, each slice $l^{(t)}$ is a determinantal bi-orthogonal ensemble; moreover, as stated in \cite{BetOcc}, the whole time-extended (discrete and continuous) processes are determinantal. 

\begin{remark}
It is worth noticing that in view of the deformed interaction potential $\Delta(x^\eta)\Delta(x^\theta)$, one cannot derive the formula of the correlation Kernel and the asymptotic shape of the partition by means of classical Schur function theory as it happens, for example, in \cite{OR03}.
\end{remark}

\paragraph{Connection to Last Passage Percolation.} Plane partitions are intimately related to directed last passage percolation (LPP) models via the celebrated Robinson-Schensted-Knuth (RSK) correspondence \cite{Stanley1999}. In LPP models, random weights are assigned to the vertices of a lattice, and the length of a path is defined as the sum of these weights. We look for the longest path $L$ from a starting vertex to an endpoint. Note that the end point is not deterministic, but the path length is almost surely (a.s.) finite.
In the discrete setting, geometric random variables are often assigned to the lattice points, with weights $\omega_{i,j}\sim \text{Geom}(a Q^{i-1/2} \tilde Q^{j-1/2})$ where $a>0$ and $Q,\tilde Q$ control the inhomogeneity of the environment. The longest path length $L$ exhibits asymptotic fluctuations interpolating between Gumbel and Tracy--Widom distributions, depending on the parameters. This behavior was characterized in \cite{BetOcc} and previously encountered in a deformed GUE ensemble in \cite{JohanssonGumbelAiry}.

In the continuous setting, power-law distributed weights $\widehat{\omega}_{i,j}\sim\text{Pow}(\alpha + \eta(i-\frac12) +\theta(j-\frac12))$
replace the geometric weights. The longest path in this setting is asymptotically described by the hard-edge kernel of the Muttalib--Borodin process. For a more detailed description, see \cite{BetOcc}.

\tikzset{%
  pics/young diagram/.style={%
    code={%
      \foreach \k [count=\j from 0] in {#1} {%
        \foreach \i in {1, ..., \k}{%
          \draw (\i-1, -\j) rectangle ++(1, -1);
        }
      }
    }
  }
}

\tikzset{
  pics/pCube/.style={
    code={%
      \draw[fill=blue!50, canvas is yz plane at x=0] (-1, 0) rectangle ++(1, 1);
      \draw[fill=blue, canvas is zx plane at y=0] (0, -1) rectangle ++(1, 1);
      \draw[fill=blue!20, canvas is xy plane at z=1] (-1, -1) rectangle ++(1, 1);
      \filldraw[canvas is xy plane at z=1]
      (-1/2, -1/2) circle (.08);
    }
  },
  pics/pPartition/.style={%
    code={
      \foreach \partition [count=\i from 1] in {#1} {%
        \foreach \p [count=\j from 1] in \partition {%
          \foreach \k in {1, ..., \p} {%
            \path (\i, \j, \k -1) pic {pCube};
          }
        }
      }
    }
  },
  pics/pPartition+/.style={%
    code={
      \tikzmath{%
        integer \i, \j, \maxI, \maxJ, \M, \maxP;
        \maxI = 0;
        \maxJ = 0;
        \maxP = 0;
      }
      \foreach \partition [count=\i from 1] in {#1} {%
        \xdef\maxI{\i}
        \foreach \p [count=\j from 1] in \partition {%
          \ifnum\j>\maxJ
          \xdef\maxJ{\j}
          \fi
          \ifnum\i<2
          \ifnum\j<2
          \xdef\maxP{\p}
          \fi
          \fi  
        }
      }
      \ifnum\maxI>\maxJ
      \xdef\M{\maxI}
      \else
      \xdef\M{\maxJ}
      \fi
      \fill[blue!10, canvas is xy plane at z=0]
      (0, 0) rectangle ++(\maxI, \maxJ);   
      \draw[canvas is xy plane at z=0] (0, 0) grid ++(\maxI, \maxJ);
      \foreach \x in {0.5,1.5,2.5,3.5,4.5,5.5} {
     	\foreach \y in {0.5,1.5,2.5,3.5,4.5,5.5} {
      		\fill[black] (\x,\y) circle (0.09); 
      	}
      }
      \foreach \partition [count=\i from 1] in {#1} {%
        \foreach \p [count=\j from 1] in \partition {%
          \foreach \k in {1, ..., \p} {%
            \path (\i, \j, \k -1) pic {pCube};
          }
        }
      }
    }
  }
}

\begin{figure}[!t]
  \centering
  \tdplotsetmaincoords{60}{45}
  \scalebox{0.65}{

\begin{tikzpicture}[evaluate={\a=7;}]
	\tdplotsetmaincoords{60}{120}  
	\begin{scope}[tdplot_main_coords]
		\draw[black, ->] (0, 0, 0) -- (\a, 0, 0) node[pos=1.05] {$x$};
		\draw[black, ->] (0, 0, 0) -- (0, \a, 0) node[pos=1.05] {$y$};
		\draw[black, ->] (0, 0, 0) -- (0, 0, 1.3*\a) node[pos=1.05] {$z$};
		
		\path (0, 0, 0) pic[]
		{pPartition+={{8,5,4,4,2,1},{6,5,3,3,2,1},{4,3,2,2,1},{4,2,1,1},{3,1},{1,1}}};    
	\end{scope}	
\end{tikzpicture}
}\quad
\begin{tikzpicture}[evaluate={%
      integer \Nleft, \Nright;
      \Nleft = 6;
      \Nright = 6;
    }, scale=.40]
    \draw[->] (-7,0)--(-7,14);
    \foreach \k in {0,...,14}{
      \node[left] at (-7,\k) {\footnotesize $\k$};
    }
    \foreach \k [parse=true] in {-\Nleft,...,\Nright}{%
      \draw[dashed, color=gray] (\k,0)--(\k,14);
      \node[below] at (\k,0) {\footnotesize $\k$};
    }
    \node at (-5,6) [circle,fill,inner sep=1.5pt]{};
    \foreach \k in {5,8}{%
        \node at (-4,\k) [circle,fill,inner sep=1.5pt]{};
        }
    \foreach \k in {3,5,9}{%
        \node at (-3,\k) [circle,fill,inner sep=1.5pt]{};
        }
    \foreach \k in {2,3,6,9}{%
        \node at (-2,\k) [circle,fill,inner sep=1.5pt]{};
        }
    \foreach \k in {1,2,4,7,11}{%
        \node at (-1,\k) [circle,fill,inner sep=1.5pt]{};
        }
    \foreach \k in {0,1,3,5,9,13}{%
        \node at (0,\k) [circle,fill,inner sep=1.5pt]{};
        }
    \foreach \k in {1,2,5,7,10}{%
        \node at (1,\k) [circle,fill,inner sep=1.5pt]{};
        }
    \foreach \k in {2,4,7,9}{%
        \node at (2,\k) [circle,fill,inner sep=1.5pt]{};
        }
    \foreach \k in {3,6,9}{%
        \node at (3,\k) [circle,fill,inner sep=1.5pt]{};
        }
    \foreach \k in {5,7}{%
        \node at (4,\k) [circle,fill,inner sep=1.5pt]{};
        }
    \foreach \k in {6}{%
        \node at (5,\k) [circle,fill,inner sep=1.5pt]{};
        }
 \end{tikzpicture}

\caption{A plane partition $\displaystyle \Lambda=\begin{pmatrix}
  8 & 5 & 4 & 4 & 2 & 1\\
  6 & 5 & 3 & 3 & 2 & 1\\
  4 & 3 & 2 & 2 & 1 & 0\\
  4 & 2 & 1 & 1 & 0 & 0\\
  3 & 1 & 0 & 0 & 0 & 0\\
  1 & 1 & 0 & 0 & 0 & 0
  \end{pmatrix}$ 
  with base in an $ M \times N$ rectangle for $(M,N)=(6, 6)$. We have LeftVol=$\sum_{i=-M}^{-1} |\lambda^{(i)}|=26$, CentralVol=$|\lambda^{(0)}|=15$, RightVol=$\sum_{i=1}^{N} |\lambda^{(i)}|=28$. To the right the corresponding particle configuration $\ell^{(t)}$.
  } 
  \label{fig:pp_mb}
\end{figure}
\paragraph{Connection to Particle Systems.} The sequence of diagonal partitions of $\Lambda$ defines a point process on \newline $\{ -M+1, \dots, -1, 0, 1, \dots, N -1\} \times \N$ where we place $L_t$ points at each time $-M \leq t \leq N$ \cite{Gorin2021}. The particle positions $l^{(t)}$ are given by the deterministic shift
	\begin{equation}
		l^{(t)}_i = \lambda^{(t)}_i + M - i, \quad 1 \leq i \leq L_t.
	\end{equation}
    We remark that each partition has length at most 
    \begin{equation}
    \label{eq:upper_partition}
         L_t=\begin{cases} M-|t| & -M\leq t\leq0,\\
    \min(M,N-t) & 0<t\leq N.
    \end{cases}
    \end{equation}
	As noted in Figure~\ref{fig:pp_mb}, the ensemble $(l^{(t)})_{t}$ is obtained by a shift of the positions of all horizontal lozenges in the plane partition.
	Obviously, also the particle system is determinantal, with the probability of finding particles at positions 
\((t_1, k_1), \dots, (t_n, k_n)\) given by:

\begin{equation}
\P \left( \bigcap_{i=1}^n \{ \text{Particle at } (t_i, k_i) \} \right) = \det \left[ K_d(t_i, k_i; t_j, k_j) \right]_{i,j=1}^n
\end{equation}
with an explicit correlation kernel $K_d(s, k; t, k')$.
Under the scaling \eqref{eq:scaling} we obtain a particle system on $(x(t))_t$ on $\{-M+1,\dots,N-1\}\times[0,1]$ (see Figure \ref{fig:pp_ips}) whose multi-point distribution is still described by an explicit kernel\cite{BetOcc} $K_c(s,x;t,y)$ as
\begin{equation}
\P \left( \bigcap_{i=1}^n \{ \text{Slice } t_i \text{ has a particle at } (x_i, x_i+\di x_i) \} \right) \prod_{i=1}^n \di x_i = \det \left[ K_c(t_i, x_i; t_j, x_j) \right]_{i,j=1}^n \prod_{i=1}^n \di x_i.
\end{equation}
Due to the discrete geometry of the lattice, the particle positions $l_i^{(t)}$ are subjected to a hard packing constraint (the discrete exclusion principle). In the macroscopic limit, under the exponential mapping, this constraint manifests as a strict upper bound on the continuous particle density $\mu(x)\leq (\beta \kappa x)^{-1}$. The spatial separation between regions where this constraint is active (the \textit{frozen region}, where particles are tightly packed) and inactive (the \textit{liquid region}) is delineated by the \textit{arctic curve} \cite{Kenyon2007}. One of the main novelties of our result is the exact analytic characterization of this phase transition for the Muttalib--Borodin ensemble. To the best of our knowledge, this is the first time a constrained variational problem exhibiting this phenomenon has been rigorously solved for a bi-orthogonal ensemble.

\subsection{Main results and techniques: Large Deviation Principles and Riemann--Hilbert Analysis}

In this paper, we consider the Muttalib--Borodin ensemble \eqref{eq:MBdistribution} in the regime 

\begin{equation}
    \label{eq:transformation}
    q=e^{-\epsilon},\qquad a=e^{-\alpha\epsilon},\qquad x_i^{(t)}=e^{-\epsilon l_i^{(t)}},\qquad\epsilon\to0^+.
\end{equation}
We are interested in the regime as the length of the partition approaches infinity, so we consider $M=\gamma^2 N$ and $t=\xi N$. By considering the weights of the marginal distribution~\eqref{eq:weights_d}, we realize that we must consider  three different regimes of $t$:
\begin{equation}
t=\xi N\in\begin{cases}
	[-M+1,0]\\(0,N-M]\\(N-M,N]
\end{cases}\,,
\end{equation}
which corresponds to three different regimes for $\xi$
\begin{equation}
    \xi\in\begin{cases}
	(-\gamma^2,0]\\(0,1-\gamma^2]\\(1-\gamma^2,1]\end{cases}\,.
\end{equation}
This allows us to rewrite the various regimes of $L_t$ \eqref{eq:upper_partition} as:
\begin{equation}
L_\xi=\begin{cases}
	\gamma^2N-|\xi|N, & \xi\in(-\gamma^2,0]\\\gamma^2N, &\xi\in(0,1-\gamma^2]\\N(1-\xi), &\xi\in(1-\gamma^2,1]
\end{cases}\,.
\end{equation}
Let us define the empirical measure for the discrete and the ``continuous'' model:
\begin{equation}
\mu_N^{(\xi)}=\frac{1}{L_\xi}\sum_{i=1}^{L_\xi}\delta_{\frac{l_i^{(\xi)}}{N}},\quad \widehat\mu_N^{(\xi)}=\frac{1}{L_\xi}\sum_{i=1}^{L_\xi}\delta_{x_i^{(\xi)}}.
\end{equation} 
In view of the definition of $l_i^{(\xi)}$, we  notice that for all $i$, $l_i^{(\xi)} \geq M - L_\xi$ which means that approximately $l_i^{(\xi)}/ N  \geq \gamma^2 - \kappa$, with $\kappa = \kappa(\xi)= L_\xi/N$, therefore, setting $\varepsilon=\frac{\beta}{N} + o(N^{-1})$,  $x_i^{(\xi)}  \leq e^{ - \beta( \gamma^2 - \kappa) + o(N^{-1})}$. So, $\widehat{\mu}_N^{(\xi)}$ has a support in $[0, e^{ - \beta( \gamma^2 - \kappa)}]$ asymptotically. Let us define, for $h\in(0,1)$,

 \[ \mathcal{P}^{\beta}([0,h]) = \left\{ \mu \in \mathcal{P}([0,h])\; : \; \frac{d \mu}{ dx }(x) \leq \frac{1}{\beta x}\right \}.  \]

\noindent \textbf{Our first result} is to show that the measures $\widehat \mu_N^{(\xi)}=\frac{1}{L_\xi}\sum_{i=1}^{L_\xi}\delta_{x_i^{(\xi)}}$ satisfy a Large deviation principle, and they concentrate on a measure $\mu(dx)\in\mathfrak{P}=\mathcal{P}^{\beta {  \kappa}}([0,{e^{ - \beta( \gamma^2 - \kappa)}}])$.

\begin{theorem}\label{thm:mainLDP}
Consider the measures $\widehat{\mu}_N^{(\xi)}$, they satisfy a large deviation principle in $\mathcal{P}(\R^+)$ with speed $N^2$ and good rate function $J^{(\xi)}=I^{(\xi)} - \inf I^{(\xi)}$, where the rate function $I^{(\xi)}$ is defined as 

\begin{equation}
	\label{eq:functional}
	 I^{(\xi)}( \mu) = -H^{(\xi)}(\mu) - K^{(\xi)}(\mu) - { M^{(\xi)}(\mu)},\end{equation}
where $\kappa=\kappa(\xi)=L_\xi/N$, and $H^{(\xi)}(\mu)$, $K^{(\xi)}(\mu)$, $M^{(\xi)}(\mu)$ have the following forms:
\begin{equation}
	H^{(\xi)}(\mu)=\frac{\kappa^2}{2} \int \int\left(\ln | x^{\theta} - y^{\theta}| + \ln | x^{\eta} - y^{\eta}|\right) d \mu(x) d\mu(y)\, ;
\end{equation}
\begin{itemize}
	\item[(i)] if $\xi\in (-\gamma^2,0]$, $\kappa=\gamma^2-|\xi|$ and
	\begin{equation}K^{(\xi)}(\mu)={\kappa}\int\int_{\gamma^2-1}^{|\xi|}\ln(1-x^{\theta}e^{{ \beta}\theta u})du\, d\mu(x)\,, \qquad 
	M^{(\xi)}(\mu)= \kappa \eta |\xi| \int  \ln(x)\, d\mu(x);
	\end{equation}
	
	\item[(ii)] if $\xi\in(0,1-\gamma^2]$, $\kappa=\gamma^2$ and
	\begin{equation}K^{(\xi)}(\mu)={\kappa}\int\int_{0}^{1-\gamma^2-\xi}\ln(1-x^{\theta}e^{-{ \beta}\theta u})du\, d\mu(x)\,, \qquad M^{(\xi)}(\mu)= \kappa \theta \xi \int  \ln(x)\, d\mu(x);\end{equation}
	
	\item[(iii)] if $\xi\in(1-\gamma^2,1]$, $\kappa=1-\xi$ and
	\begin{equation}K^{(\xi)}(\mu)={\kappa}\int\int_{{(1-\gamma^2-\xi)}}^{0}\ln(1-x^{\eta}e^{-\beta\eta u})du\, d\mu(x)\,, \qquad M^{(\xi)}(\mu)= \kappa \theta \xi \int  \ln(x)\, d\mu(x).\end{equation}
\end{itemize}
Furthermore, the rate function $I^{(\xi)}$ is strictly convex and has a unique minimizer $\mu(dx)\in \mathfrak{P}$.
\end{theorem}

\begin{remark}
    \label{rem:concentration}
    We observe that the rate function $I^{(\xi)}(\mu)$ is infinite if the measure $\mu$ has a discrete component, therefore we deduce that the equilibrium measure, i.e. the minimizer of the rate function, is absolutely continuous with respect to the Lebesgue measure.
\end{remark}

\begin{remark}
\label{rem:simplifications}
	One can get rid of $\theta$ by considering $\widehat{\nu}_N$ the pushforward of 
	$\widehat{\mu}_N$ by $x \to x^{1/\theta}$. This is equivalent to consider  $y_i^{(\xi)} = e^{ - \epsilon'_N l_i^{(\xi)}}$ with $\epsilon'_N = \epsilon_N\theta$, therefore $\widehat{\nu}_N = L_\xi^{-1} \sum \delta_{y_i^{(\xi)}}$. This new measure satisfies the same large deviations principle with the same rate function, but we replace $\theta$ by $1$, $\eta$ by $\eta/\theta$ and $\beta$ by $\theta \beta$. An analogous result holds for $\eta$ in place of $\theta$. We use this property to find an explicit minimizer of \eqref{eq:functional}.
 \end{remark}

\noindent We notice that the space of equilibrium measures is subject to the constraint $\mu(x)\leq (\beta \kappa x)^{-1}$. This feature naturally divides the phase space into two distinct regions depending on the values of the parameter $\beta$. A \textit{subcritical} region, where the upper constraint is not active, meaning that the strict inequality holds for $x\in \supp(\mu(x))$; and a \textit{supercritical} region, where there are intervals where the upper constraint is active, meaning that there are intervals $(c,d)\subseteqq \supp(\mu(x))$ such that $\mu(x)=\frac{1}{\beta \kappa x}$ for $x\in(c,d)$. Through Riemann--Hilbert analysis, we are able to explicitly compute the minimizer of $I^{(\xi)}$ in the subcritical case \textit{and} in the supercritical case. As a byproduct of our analysis, borrowing the term from dimer models \cite{Kenyon2007}, we are able to fully describe  what we call the \textit{arctic curve}, a curve in the $(\xi,x)$--plane which divide the space in two regions: the frozen region and the liquid region.

One of the main objects that makes this analysis possible is the function $J_{c_0,c_1}(s)$

\begin{equation}
\label{eq:J_c0_c1}
			J_{c_0,c_1}(s) = (c_1s+c_0) \left(\frac{s+1}{s}\right)^\frac{1}{\nu}\,, \qquad \nu\geq 1
\end{equation}
which has the following properties 
\begin{lem}
		\label{lem:J_easy}
		Consider the mapping $J_{c_0,c_1}(s)$ \eqref{eq:J_c0_c1}, if the branch-cut is chosen is such a way that $J_{c_0,c_1}(s)$ is analytic in $\mathbb{C}\setminus[-1,0]$ and $J_{c_0,c_1}(s)\sim c_1s$ as $s\to\infty$, then the following holds
		\begin{enumerate}
			\item $J_{c_0,c_1}(s)$ has two critical points $s_a\leq-1, s_b\geq0$
			
			\begin{equation}
   \label{eq:crit_points}
			\begin{split}
			& s_a = -\frac{\nu-1}{2\nu} - \frac{1}{2\nu c_1}\sqrt{4c_0c_1\nu + c_1^2(\nu-1)^2} \\
			& s_b = -\frac{\nu-1}{2\nu} + \frac{1}{2\nu c_1}\sqrt{4c_0c_1\nu + c_1^2(\nu-1)^2}
			\end{split}
			\end{equation}
			 which are mapped to  $a=J_{c_0,c_1}(s_a)$ and $b=J_{c_0,c_1}(s_b)$; 
			 
			\item $J_{c_0,c_1}(s)$ is real for $s\in (-\infty,s_a]\cup [s_b,+\infty)$ and along two complex conjugate arcs $\sigma_+,\sigma_-$ joining $s_a$ and $s_b$;
			\item both $\sigma_+$ and $\sigma_-$ are bijectively mapped in $[a,b]$;
			\item defining $\mathbb{H}_\nu = \{ z\in \mathbb{C}\,\vert\, -\frac{\pi}{\nu} \leq \Arg(z) \leq \frac{\pi}{\nu}\}$ and let $D$ be the area enclosed by the union of $\sigma_+,\sigma_-$, then $J_{c_0,c_1}\,:\, D\setminus[-1,0] \to \mathbb{H}_\nu\setminus[a,b]$ and $J_{c_0,c_1}\,:\, \mathbb{C} \setminus \overline D \to \mathbb{C}\setminus[a,b] $ are two bijections.
		\end{enumerate}
	\end{lem}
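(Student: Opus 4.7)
The plan is to first prove (1) by direct differentiation. Writing $f(s) = (s+1)/s$, one computes
\[
J'(s) = \frac{f(s)^{1/\nu - 1}}{\nu s^2}\left[c_1 \nu s^2 + c_1(\nu - 1)s - c_0\right],
\]
so the critical points are the two roots of the quadratic $c_1 \nu s^2 + c_1(\nu-1)s - c_0 = 0$, given explicitly by the quadratic formula and matching \eqref{eq:crit_points}. Since the product of roots $-c_0/(c_1 \nu)$ is negative, one root is positive (this is $s_b$) and one is negative; evaluating the quadratic at $s = -1$ gives $c_1 - c_0$, so under the standing hypothesis $c_0 \geq c_1$ (which holds in the parameter regime of interest) we obtain $s_a \leq -1 \leq 0 \leq s_b$.

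For parts (2) and (3), I would exploit the real-analytic structure of $J$. The coefficients are real and the branch cut is conjugation-symmetric, so $J^{-1}(\R)$ is symmetric about the real axis, and $J$ is manifestly real on $\R \setminus [-1, 0]$. Near each critical point $s_c$, Taylor expansion yields $J(s) - J(s_c) = \tfrac{1}{2}J''(s_c)(s-s_c)^2 + O((s-s_c)^3)$ with $J''(s_c) \in \R \setminus \{0\}$; the condition $J(s) \in \R$ therefore selects the four tangent directions $\pm 1, \pm i$ to leading order. Two directions continue along the real axis, while the other two initiate arcs $\sigma_\pm$ off the real line, exchanged by conjugation. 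Since there are no further critical points in the interior of $\sigma_+$, the implicit function theorem gives a smooth simple extension, and $J$ is strictly monotone along $\sigma_+$. Ruling out the three alternative fates of $\sigma_+$---returning to $s_a$ (inconsistent with strict monotonicity), escaping to $\infty$ (the expansion $J(s) \sim c_1 s$ forces $s$ back onto the real axis for real limits), or terminating on $(-1, 0)$ (where $J$ takes complex boundary values with argument $\pm \pi/\nu$)---one concludes that $\sigma_+$ must terminate at $s_b$, and the bijection $J|_{\sigma_+}\colon \sigma_+ \to [a, b]$ follows from monotonicity; the statement for $\sigma_-$ follows by conjugation.

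For (4), the simple closed curve $\Gamma = \sigma_+ \cup \sigma_- \cup \{s_a, s_b\}$ partitions $\C$ into a bounded component $D$ (which contains $[-1, 0]$) and an unbounded component $\C \setminus \overline{D}$. On $\C \setminus \overline{D}$, the restriction of $J$ is holomorphic, critical-point-free in the interior, and proper at infinity (with $J(s) \sim c_1 s$), hence a holomorphic covering onto its image; the asymptotic $J(s) \sim c_1 s$ fixes the degree to $1$, giving a biholomorphism onto $\C \setminus [a, b]$. On $D \setminus [-1, 0]$, I would trace the boundary: $\sigma_\pm$ each map onto $[a, b]$, while on the two edges of the cut, $(s+1)/s$ is real negative and $c_1 s + c_0 > 0$ (using $c_0 \geq c_1$), so the principal branch sends them to the rays $\{\arg z = \pm \pi/\nu\}$, with modulus spanning $(0, \infty)$ as $s$ traverses $(-1, 0)$. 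This identifies the boundary image with $\partial(\mathbb{H}_\nu \setminus [a, b])$; combined with the critical-point-free interior and a degree-one count via a test point, this yields the second biholomorphism.

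The main obstacle is the global topological control of $\sigma_+$: showing that the off-axis level curve emanating from $s_a$ actually reaches $s_b$ (rather than spiraling, self-intersecting, or approaching a different limit set in the one-point compactification). This requires the case analysis above together with properness at infinity and the local structure at the critical points. Once the global structure of $\sigma_\pm$ is in hand, the bijections in (4) follow relatively cleanly from the boundary correspondence and a degree argument.
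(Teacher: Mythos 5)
The paper does not prove this lemma: immediately after the statement it defers to \cite{Claeys2014,Charlier2022}, so there is no in-paper argument to compare against. Your reconstruction follows the same route as those references --- computing $J'$ to locate the critical points as roots of $c_1\nu s^2+c_1(\nu-1)s-c_0$, analysing the level set $\{J\in\R\}$ locally at $s_a,s_b$ and globally by ruling out the alternative fates of $\sigma_+$, and then obtaining the two bijections from the boundary correspondence plus a degree count normalised by $J(s)\sim c_1 s$ --- and the sketch is sound, with the genuinely delicate step (the global structure of $\sigma_\pm$) correctly identified rather than glossed over. One substantive point you caught that the lemma statement omits: the inequalities $s_a\leq -1$ and the positivity of $c_1 s+c_0$ on the cut both require $c_0\geq c_1>0$, a hypothesis the paper only makes explicit later (in Proposition \ref{prop:ClaeysRomano generalization}); your proof correctly flags this as a standing assumption. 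A minor loose end worth tightening if this were written out in full: when ruling out termination of $\sigma_+$ on $(-1,0)$ you invoke boundary values of argument $\pm\pi/\nu$, which for $\nu=1$ are real, so that case needs the separate (easy) observation that for $\nu=1$ the level set near a regular real point is locally just the real axis, and similarly the endpoints $s=0$, $s=-1$ must be excluded by the local behaviour $J\sim c_0 s^{-1/\nu}$ and $J\sim (c_0-c_1)(s+1)^{1/\nu}$ there.
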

	\noindent The proof of this lemma can be found in \cite{Claeys2014,Charlier2022}, see also Fig \ref{fig:RHP}, from this moment on we call $\sigma = \sigma_+ \cup \sigma_-$. 
    
    The minimization problem \eqref{eq:functional} is not in standard form, one can reduce to it by mapping $x\to x^\zeta$ for $\zeta=\theta,\eta$. By doing this, we are naturally lead to consider the two following model problems  

    \begin{problem}
\label{mod_prob_1_intro}
    Let $\nu >1$, consider the functional $\mathcal{I}_\nu[\omega_{\nu}]$ defined as

    \begin{equation}
    	\label{eq:simplified_functional_1_intro}
       \mathcal{I}_\nu[\omega_{\nu}] =  - \frac{1}{2} \int_0^1 \int_0^1\left(\ln (| x^{\nu} - y^{\nu}|) + \ln( | x - y|)  \right)\omega_{\nu}(dx) \omega_{\nu}(dy)\,  - \frac{1}{\kappa}\int_0^1\int_{n_1}^{n_2} \ln(1 - x^\nu e^{-\beta \alpha u}) d u \omega_{\nu}(dx)- m_1 \int_0^1 \ln(x) \omega_{\nu}(dx) 
    \end{equation}
where $\alpha\,,\rho>0,m_1\geq 0, n_2\geq n_1$ and assume $n_1\alpha = -\rho(\gamma^2-\kappa)$, find $\omega_{\nu}(dx)\in \mathcal{P}^{\beta\rho\kappa}([0,e^{-\rho\beta(\gamma^2-\kappa)}])$, such that it minimize the previous functional. 
\end{problem}

\begin{problem}
\label{mod_prob_2_intro}
    Let $\nu >1$, consider the functional $\mathcal{I}_1[\omega_1]$ defined as

    \begin{equation}
    	\label{eq:simplified_functional_2_intro}
       \mathcal{I}_1[\omega_1] =  - \frac{1}{2} \int_0^1 \int_0^1\left(\ln (| x^{\nu} - y^{\nu}|) + \ln( | x - y|)  \right)\omega_1(dx) \omega_1(dy)\,  - \frac{1}{\kappa}\int_0^1\int_{n_1}^{n_2} \ln(1 - x e^{-\beta \alpha u}) d u \omega_1(dx)- m_1 \int_0^1 \ln(x) \omega_1(dx) 
    \end{equation}
where $\alpha\,,\rho>0,m_1\geq 0, n_2\geq n_1$ and assume $n_1\alpha = -\rho(\gamma^2-\kappa)$, find $\omega_1(dx)\in \mathcal{P}^{\beta\rho\kappa}([0,e^{-\rho\beta(\gamma^2-\kappa)}])$, such that it minimize the previous functional. 
\end{problem}

To explicitly express the minimizers for these two model problems, we rely on the conformal map $J_{c_0,c_1}(s)$ introduced in \eqref{eq:J_c0_c1}, its critical points $s_a, s_b$ \eqref{eq:crit_points}, and its conformal inverse $I_+(x)$ evaluated along the upper branch cut $\sigma_+$. 
For a given set of parameters $(\nu, \alpha, \beta, \kappa, \rho, m_1, n_1, n_2)$, we first introduce the algebraic constants and spectral roots. 

\noindent For \textbf{Model Problem \ref{mod_prob_1_intro}} ($\omega_\nu$), we define:
\begin{equation}\label{eq:intro_constants_1}
\begin{split}
    A &= \exp\left[ \frac{\alpha\beta\kappa}{\nu} \left( \nu + 1 + m_1 + \frac{\nu}{\kappa}(n_2-n_1) \right) \right], \quad B = \exp\left[ \frac{\alpha\beta\kappa}{\nu} (1 + m_1) \right], \\
    s_1 &= A\left(\frac{B-1}{A-B}\right),\qquad s_2 = \frac{B-1}{A-B}, \\
    K_1 &= \left[ e^{n_1\alpha\beta} \left(\frac{A(B-1)}{B(A-1)} \right) \right]^{\frac{1}{\nu}},\qquad K_2 = \left[ e^{n_2\alpha\beta} \left( \frac{B-1}{A-1} \right) \right]^{\frac{1}{\nu}}.
\end{split}
\end{equation}

\noindent  For \textbf{Model Problem \ref{mod_prob_2_intro}} ($\omega_1$), the parameters change to:
\begin{equation}\label{eq:intro_constants_2}
\begin{split}
    A &= \exp\left[ \alpha\beta\kappa \left( \nu + 1 + m_1 + \frac{1}{\kappa}(n_2-n_1) \right) \right], \quad B = \exp\left[ \alpha\beta\kappa\left(1 + m_1 + \frac{n_2-n_1}{\kappa}\right) \right], \\
    s_1 &= A\left(\frac{B-1}{A-B}\right),\qquad s_2 = \frac{B-1}{A-B}, \\
    K_1 &= e^{n_2\alpha\beta} \left(  \frac{A(B-1)}{B(A-1)} \right)^{\frac{1}{\nu}},\qquad K_2 = e^{n_1\alpha\beta} \left( \frac{B-1}{A-1} \right)^{\frac{1}{\nu}}.
\end{split}
\end{equation}
In both cases, the constants $c_0$ and $c_1$, that define the  map $J_{c_0,c_1}(s)$, are fixed by the system:
\begin{equation}
    c_1 = \frac{K_1 - K_2}{s_1 - s_2}, \qquad c_0 = \frac{K_2 s_1 - K_1 s_2}{s_1 - s_2}\,,
\end{equation}
which consequently fixes the spectral edges $a = J_{c_0,c_1}(s_a)$ and $b = J_{c_0,c_1}(s_b)$.

The equilibrium measure exhibits a phase transition depending on whether the hard upper constraint $x_{\max} = e^{-\rho \beta(\gamma^2-\kappa)}$ becomes active. This happens exactly when $s_1=s_b$. For this reason we define the subcritical and supercritical regime as follows

\begin{itemize}
\item[] \textbf{Subcritical Regime:} for the Model Problem \ref{mod_prob_1_intro} the subcritical regime refers to the values of $\beta$ such that $s_1 \leq s_b$, for the Model Problem \ref{mod_prob_2_intro} to the values of $\beta$ such that $s_2\geq s_b$.
\item[] \textbf{Supercritical Regime:} for the Model Problem \ref{mod_prob_1_intro} the supercritical regime refers to the values of $\beta$ such that $s_1 > s_b$, for the Model Problem \ref{mod_prob_2_intro} to the values of $\beta$ such that $s_2 < s_b$.

\end{itemize}

\begin{remark}
    It is worth noticing that one can express the phase transition also in terms of $\beta$; specifically, we can define the critical value $\beta_c$ as the \textbf{unique} value of $\beta$ such that $s_1=s_b$ for Model Problem \ref{mod_prob_1_intro} and $s_2=s_b$ for Model Problem \ref{mod_prob_2_intro}. Then for $0\leq \beta < \beta_c$ we are in the subcritical regime, while for $\beta > \beta_c$ we are in the supercritical regime. 
\end{remark}

We summarize the explicit limit shapes in the following theorem (proved in Section \ref{sec:rhp}).

\begin{theorem}[Explicit Limit Shapes]\label{thm:intro_main_shapes}
Let $\omega \in \{\omega_\nu, \omega_1\}$ be the unique equilibrium measure minimizing Model Problem \ref{mod_prob_1_intro} or \ref{mod_prob_2_intro}. 
\begin{enumerate}[label=\roman*.]
    \item \textbf{Subcritical Regime:} The upper constraint is globally inactive. The measure is supported on a single band $(a, b)$ and admits the density:
    \begin{equation}
        \omega(x) = \frac{1}{\pi \beta \rho\kappa x}\Arg\left(\frac{s_1-I_+(x)}{s_2-I_+(x)}\right) \mathds{1}_{x \in (a, b)}\,.
    \end{equation}
    
    \item \textbf{Supercritical Regime:} The measure saturates the upper constraint on the interval $(b, x_{\max})$. 
    The density takes the form:
    \begin{equation}
        \omega(x) = \begin{cases}
            \frac{1}{\pi \beta \rho\kappa x}\Arg\left(\frac{s_1-I_+(x)}{s_2-I_+(x)}\right)  \quad & x\in (a,b) \\
            \frac{1}{\beta\rho\kappa x} \quad & x\in (b, x_{\max})
        \end{cases}\,.
    \end{equation}
\end{enumerate}
\end{theorem}

Finally, the equilibrium measure $\mu(x)$ of the generalized Muttalib--Borodin process across the full temporal evolution $\xi$ is recovered by evaluating the appropriate model problem with the parameter identification outlined below:

\begin{cor}[Parameter Identification]
\label{cor:intro_identification}
The equilibrium measure $\mu(x)$ is given by $\eta x^{\eta-1}\omega_\nu(x^\eta)$ when applying Model Problem \ref{mod_prob_1_intro}, or by $\eta x^{\eta-1}\omega_1(x^\eta)$ when applying Model Problem \ref{mod_prob_2_intro} (symmetrically with $\theta$ if $\eta > \theta$). The parameters map exactly as follows:
\begin{enumerate}
    \item[\textbf{Case 1:}] $\theta > \eta$ 
    \begin{enumerate}[label=(\roman*)]
        \item  $\xi\in(-\gamma^2,0]$: Use MP--\ref{mod_prob_1_intro} with $\nu=\frac{\theta}{\eta}$, $\kappa=\gamma^2 - |\xi|$, $m_1=\frac{|\xi|}{\kappa}$, $n_1=-|\xi|$, $n_2=1-\gamma^2$, $\alpha=\theta$, $\rho=\eta$. Then $\mu(x)=\eta x^{\eta-1}\omega_\nu(x^\eta)$.
        \item  $\xi\in(0,1-\gamma^2]$: Use MP--\ref{mod_prob_1_intro} with $\nu=\frac{\theta}{\eta}$, $\kappa=\gamma^2$, $m_1=\frac{\theta \xi}{\eta \kappa}$, $n_1=0$, $n_2=1-\gamma^2-\xi$, $\alpha=\theta$, $\rho=\eta$. Then $\mu(x)=\eta x^{\eta-1}\omega_\nu(x^\eta)$.
        \item  $\xi\in(1-\gamma^2,1]$: Use MP--\ref{mod_prob_2_intro} with $\nu=\frac{\theta}{\eta}$, $\kappa=1-\xi$, $m_1=\frac{\theta\xi}{ \eta\kappa}$, $n_1=1-\gamma^2-\xi$, $n_2=0$, $\alpha=\eta$, $\rho=\eta$. Then $\mu(x)=\eta x^{\eta-1}\omega_1(x^\eta)$.
    \end{enumerate}
    \item[\textbf{Case 2: }] $\eta > \theta$ 
    \begin{enumerate}[label=(\roman*)]
        \item  $\xi\in(-\gamma^2,0]$: Use MP--\ref{mod_prob_2_intro} with $\nu=\frac{\eta}{\theta}$, $\kappa=\gamma^2 - |\xi|$, $m_1=\frac{|\xi|\eta}{\kappa\theta}$, $n_1=-|\xi|$, $n_2=1-\gamma^2$, $\alpha=\theta$, $\rho=\theta$. Then $\mu(x)=\theta x^{\theta-1}\omega_1(x^\theta)$.
        \item $\xi\in(0,1-\gamma^2]$: Use MP--\ref{mod_prob_2_intro} with $\nu=\frac{\eta}{\theta}$, $\kappa=\gamma^2$, $m_1=\frac{\xi}{\kappa}$, $n_1=0$, $n_2=1-\gamma^2-\xi$, $\alpha=\theta$, $\rho=\theta$. Then $\mu(x)=\theta x^{\theta-1}\omega_1(x^\theta)$.
        \item  $\xi\in(1-\gamma^2,1]$: Use MP--\ref{mod_prob_1_intro} with $\nu=\frac{\eta}{\theta}$, $\kappa=1-\xi$, $m_1=\frac{\xi}{ \kappa}$, $n_1=1-\gamma^2-\xi$, $n_2=0$, $\alpha=\eta$, $\rho=\theta$. Then $\mu(x)=\theta x^{\theta-1}\omega_\nu(x^\theta)$.
    \end{enumerate}
\end{enumerate}
\end{cor}

Up to our knowledge, this is the first time that an explicit solution for the equilibrium measure of a Muttalib--Borodin ensemble with an upper constraint is found. The explicit nature of the solution allows us to describe the shape of the plane partition across its full temporal evolution $\xi$ both in the subcritical and in the supercritical regimes. In Figure \ref{fig:plot_density} we plot the shape of the equilibrium measure $\mu(x)$ for several values of the parameters.

  \begin{figure}
        \centering
        \includegraphics[width=\linewidth]{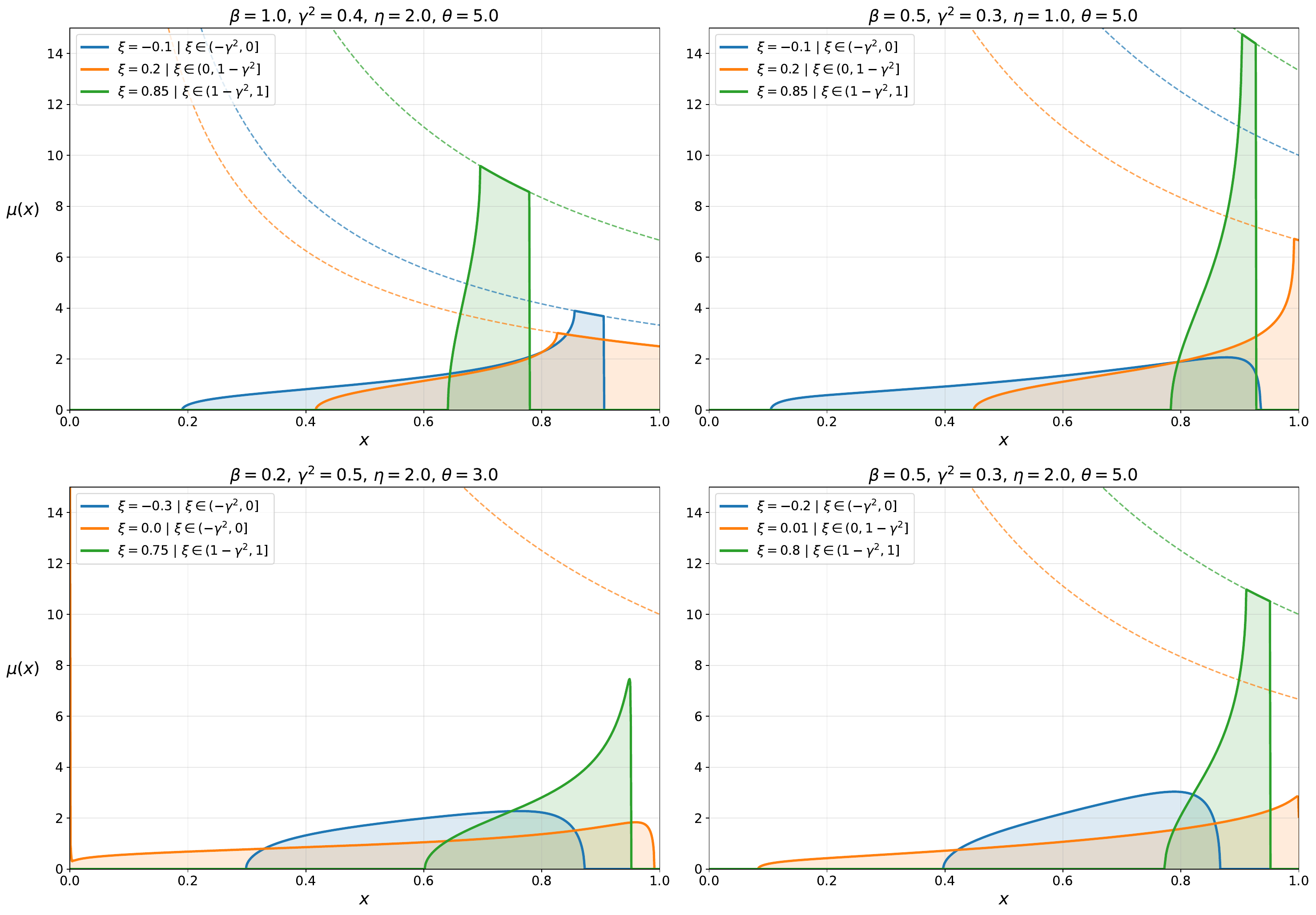}
        \caption{Several plots of the density functions $\mu(x)$.}
        \label{fig:plot_density}
    \end{figure}

    \begin{remark}
    In the subcritical regime, we notice that assuming $\xi\neq 0,1-\gamma^2$ then

    \begin{align}
        &J_{c_0,c_1}(s) = a + \frac{J''(s_a)}{2}(s-s_a)^2 + o((s-s_a)^3) \quad s\to s_a \\
         &J_{c_0,c_1}(s) = b + \frac{J''(s_a)}{2}(s-s_b)^2 + o((s-s_b)^3) \quad s\to s_b
    \end{align}
    therefore $\omega(x)$ decays as a square-root nearby the endpoints.
    In all regimes, if $\xi=0$, then $s_a=-1$, $a=0$ and there exists a constant $C_0$ such that
    \begin{equation}
        \omega(x) \sim C_0 x^{- \frac{1}{\nu +1}}\,, \quad x\to 0^+\,,
    \end{equation}
    this is the same behavior found in \cite{Claeys2014}, where the author notices that this is not the behaviour of the equilibrium measure of random matrix ensemble where the typical exponent is $1/2$.  
    In particular, this implies that if $\xi=0$ then there exists a constant $\widetilde C_0$ such that

    \begin{equation}
        \mu(x) \sim \wt C_0 x^{\frac{\theta\eta}{\theta+\eta} -1}\,, \quad x\to 0^+\,.
    \end{equation}
    We notice that, since $\theta,\eta>0$, $\mu(x)$ is always integrable and the exponent $\frac{\theta\eta}{\theta+\eta} -1 \in (-1,+\infty)$. This behaviour is different from the classical random matrix ensembles, where the decay is typically $1/2$. In a more general setting, one can have equilibrium measures with rational $\pm \frac{p}{q}$ decay \cite{Bergre2009UniversalSL}, but our exponent ranges over the interval $(-1,\infty)$.  
    \end{remark}

    Given the previous result, we are able to fully describe the density plot of the generalized Muttalib--Borodin process throughout its full temporal evolution $\xi$ by numerically computing the equilibrium measure $\omega$ for the appropriate model problem and then applying the parameter identification of Corollary \ref{cor:intro_identification}, see Figure \ref{fig:plot_density_beta_0}.

    \begin{figure}
        \centering
        \includegraphics[width=\linewidth]{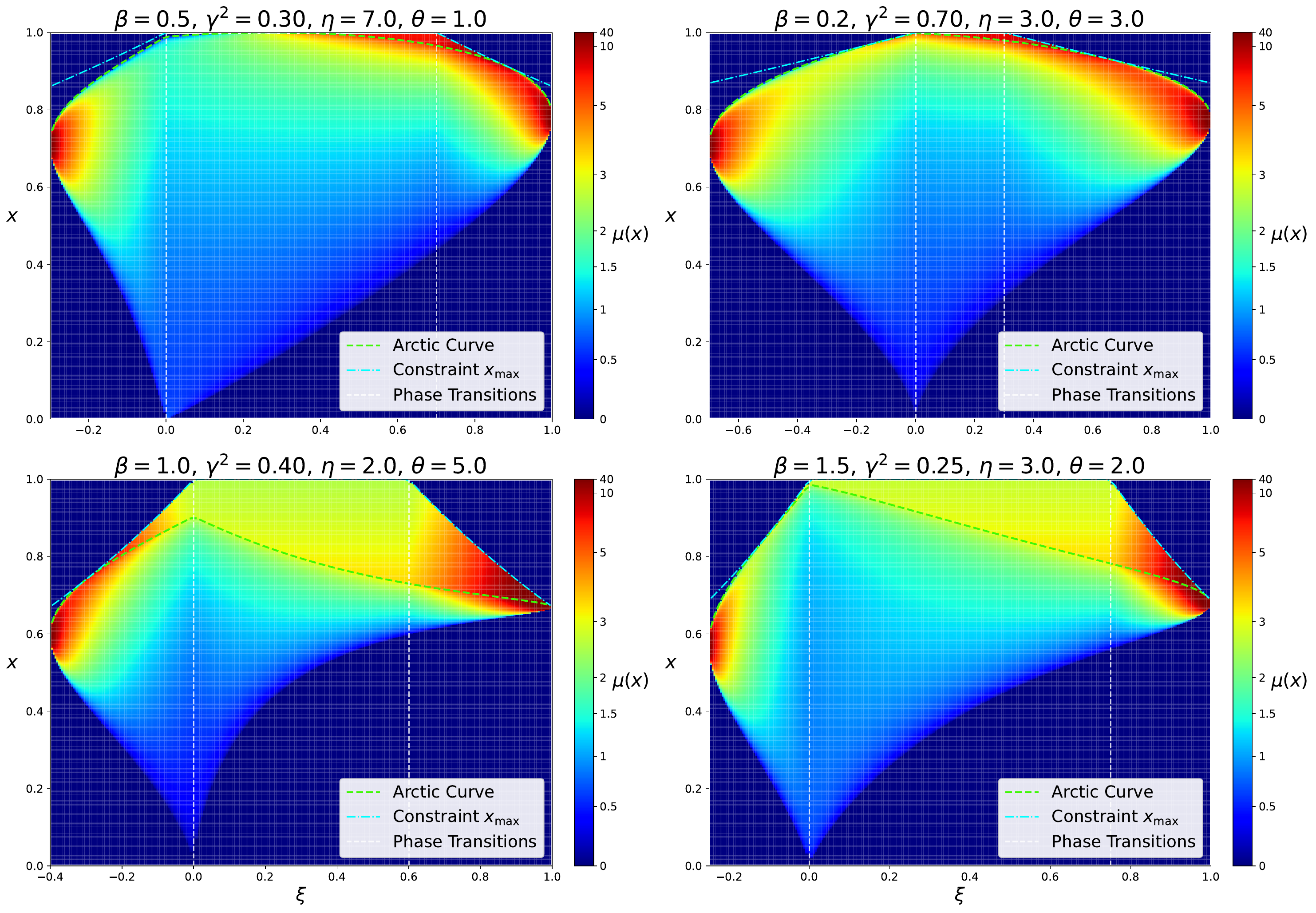}
        \caption{Generalized Muttalib--Borodin process across its full temporal evolution. The arctic curve is plotted in green, above this curve the particles are ``frozen'', meaning that they are as dense as possible, while below this curve they are ``free''.}
        \label{fig:plot_density_beta_0}
    \end{figure}

    \begin{remark}
        We notice that one can obtain the asympotic shape of the plane partition by inverting the relation \eqref{eq:transformation}, deducing that 

        \begin{equation}
            \frac{1}{N}\sum_{j=1}^N \delta_{\frac{\ell^{(\xi)}_j}{N}} \xrightarrow{N\to\infty} \nu(\lambda) \,d\lambda= \beta\mu(e^{-\beta \lambda})e^{-\beta \lambda}\,d\lambda\,.
        \end{equation}
        We plot this asymptotic shape in Figure \ref{fig:plot_shape_partition} for different values of the parameters.
    \end{remark}

    \begin{figure}
        \centering
        \includegraphics[width=\linewidth]{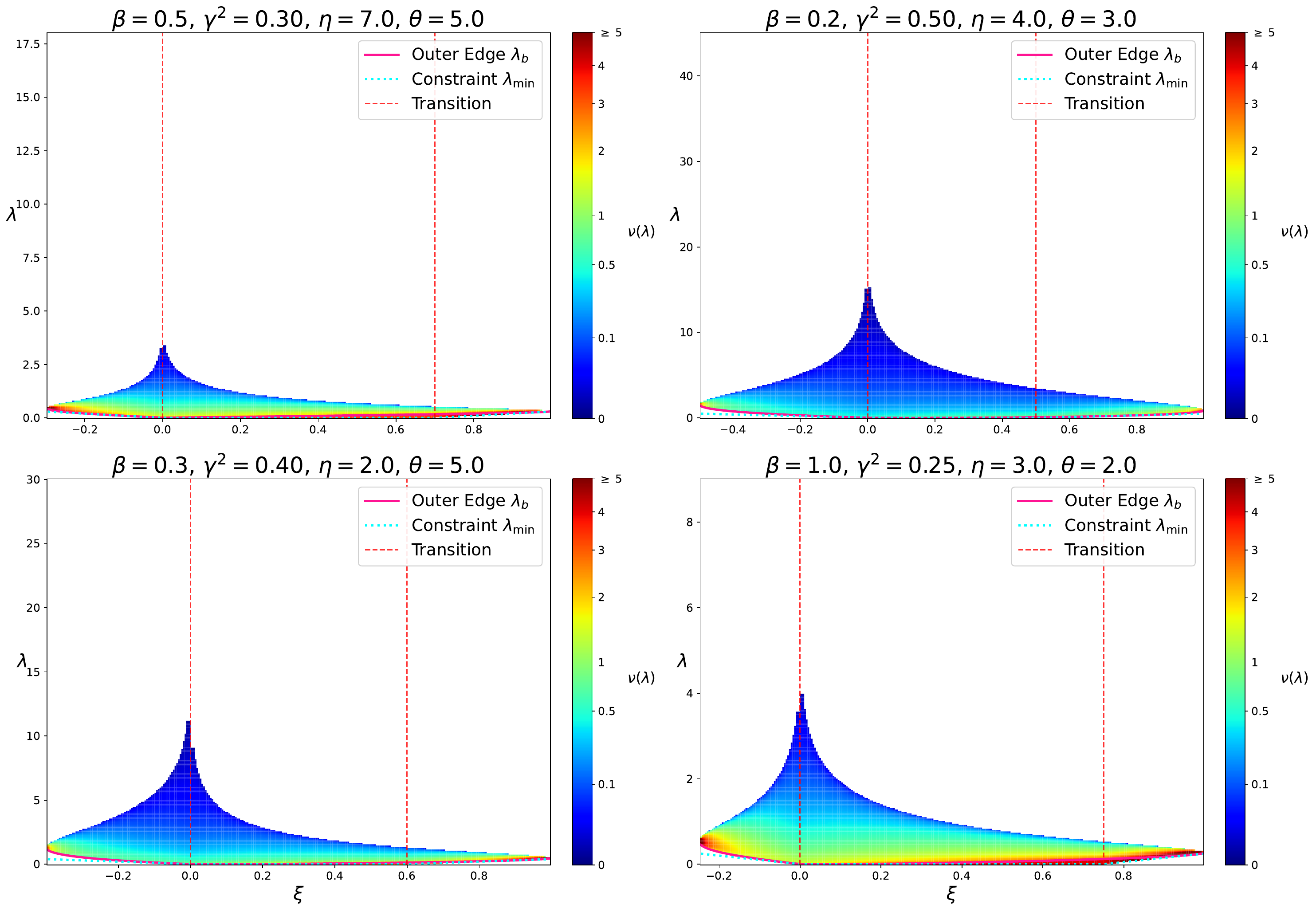}
        \caption{Several plots of the asymptotic shape $\nu(\lambda)$.}
        \label{fig:plot_shape_partition}
    \end{figure}

    Finally, one can also obtain the explicit expression of the arctic curve, which is the curve in the $(\xi,x)$ plane that divides the frozen region (where the upper constraint is active) from the liquid region (where the upper constraint is inactive). This curve is given, up to the parameter identification in Corollary \ref{cor:intro_identification},  by the curve $(\xi, J_{c_0,c_1}(s_b))$, see Figure \ref{fig:plot_density_beta_0}.

    We now briefly discuss the techniques used to obtain the results of this paper and what is the general strategy of the proofs.

\paragraph{Large deviation techniques.}
The distribution of the discrete Muttalib-Borodin ensemble closely resembles the distribution of $\beta$-ensembles. 
Those ensembles are  $N$-tuples $x=(x_1,\dots,x_N)$ points on the real line distributed according to the distribution 
\[\frac{1}{Z}\Delta(x)^{\beta} e^{ - N \sum_{i=1}^N V(x_i) } d x_1 \cdots d x_N \]
where $\Delta(x) = \prod_{ 1 \leq i < j \leq N} |x_i -x_j|$, $V$ is a potential and $Z$ a normalization constant. When one considers $\beta =1,2$, this ensemble represents the eigenvalue distribution of a random matrix whose law is $Z^{-1} e^{ - N \Tr V((H))} dH$ where $dH$ is the Lebesgue measure on the set of $N \times N$ real symmetric matrices ($\beta=1$) or complex Hermitian matrices ($\beta =2$). 
To study the limit behaviour of such ensembles, one can use the theory of large deviations \cite{Dembo2010}. More precisely, introducing for every $N \in \N$ the (random) empirical measure $ \displaystyle \hat{\mu}_N = \frac{1}{N}\sum_{i=1}^N \delta_{x_i}$, and $I_V$ the functional on $\mathcal{P}(\R)$ defined as

\[ I_V[\mu] = - \frac{\beta}{2} \int \int \ln(|x-y|) d \mu(x) d \mu(y) + \int V(x) d \mu(x) +C \in [0, + \infty] \] 
saying the sequence of such measures satisfies a large deviation with speed (usually) $N^2$ and some rate function $I_V$ means informally that for every probability measure $\mu$ on $\R$

\[ \P[ \hat{\mu}_N \approx \mu] = e^{ - N^2 I_V(\mu)} \]

If the function $I_V$ has a unique minimizer $\mu_{eq}$ (also called \emph{equilibrium measure}), such a large deviation principle gives in fact a law of large numbers with $\mu_{eq}$ as a limit.

Such results have been proven for confining potential $V$ (meaning that the measure $\mu_{eq}$ has compact support) in \cite{BAGuionnet1997} and for non-confining compact support in \cite{Hardy2012}. Similar results for the eigenvalues of Haar-distributed unitary matrices were also proved in \cite{HiaiPetz2000} and for the eigenvalues of Ginibre matrices in \cite{BA1998}. There are two main differences between the the discrete Borodin-Muttalib ensemble in this paper and the classical $\beta$-ensembles:
\begin{itemize}
	\item The term 
	$\Delta(x)$ will be replaced by $\Delta(x^{\theta})\Delta(x^{\eta})$ where $\eta>0,\theta >0$ and $x^{\theta}= (x_1^{\theta},\dots, x_N^{\theta})$.
	\item The particles $x_1,\dots, x_N$ do not lie on the whole real line, but on a discrete subset which will have roughly the form $\{ e^{-\beta \ell /N} : \ell \in \N^* \}$  for a given $\beta >0$ and the Lebesgue measure $dx$ is replaced by the counting measure on this subset. 
\end{itemize}
The first difference, which is a consequence of the the bi-orthogonal structure of our model (with $x_1,\dots,x_N$ still lying on the real line), was investigated by \cite{ESS2011,Butez2017}. One then still gets a large deviation principle by replacing the logarithmic term  $  \displaystyle \int \int \ln(|x-y|) d \mu(x) d \mu(y) $ in $I_V$ by $ \displaystyle \int \int \ln(|x^{\theta}-y^{\theta}|) d \mu(x) d \mu(y) $ and $  \displaystyle \int \int \ln(|x^{\eta}-y^{\eta}|) d \mu(x) d \mu(y) $. In fact, further generalizations were made for more general settings (see for instance, \cite{Chafai2014} for particles in $\R^N$ for general two-particles interactions and \cite{GZ2019,Berman2018} for generalizations to particles lying in more abstract topological spaces).

Regarding the discrete aspect, similar models were investigated for particles lying in $\{\ell/ N: \ell \in \Z \}$ (see for instance \cite{BGG2017} as well as \cite{Joh00,Feral2008} for large deviation principles). A feature of those models is that the limit points of $\hat{\mu}_N$ have to be measures that have a density with respect to the Lebesgue measure that is bounded by $\ell^{-1}$. The model we consider has analogous features, but we must consider a different discretization. 

Finally, one can also mention that large deviation principles also exists directly for the profile (or in other words the height function) of some plane partition models. We refer for instance to  \cite{CoLaPr1998} for plane partitions in a given box and to Lecture 22 and 23 in \cite{Gorin2021} for plane partitions on a $N\times N $ square weighted according to $q^{Volume}$. This last model is the one that is closer to our own. We nevertheless chose to study the slices of those partitions since then we can use Riemann-Hilbert techniques to get a description of the limit profile.

\paragraph{Riemann--Hilbert problem analysis.} Riemann--Hilbert problems (RHPs) provide a fundamental framework for deriving explicit formulas for some relevant quantities in various applications. Generally, an RHP is a boundary value problem in which one seeks a (matrix-valued) complex function that satisfies prescribed boundary conditions along a contour, with a normalization condition \cite{Ablowitz2003}. They have been fruitfully applied in the theory of integrable systems. Specifically, using this tool, one can get precise asymptotic for Orthogonal Polynomials and Discrete Orthogonal polynomials, see \cite{Deift2000,DiscreteOPbook,Kuijlaars2003,Deift2006RiemannHilbertMI} and the reference therein, and explicit solutions to (stochastic) integrable  PDE such as the Nonlinear Schr\"odinger equation, the Korteweg-De Vries equations, the Modified Korteweg-De Vries equation and the Kardar-Parisi-Zhang equation \cite{Babelon2003,Deift2002,Cafasso2021,Ablowitz2003nls,gkogkou2025,gkogkou2026painleveuniversalityclassesmaximal}. Other fields where the Riemann--Hilbert approach was extensively applied are Random matrix theory \cite{Deift2000,Deift1999} and determinantal point processes \cite{Borodin03}. In these contexts, RHPs are used to find explicit formulas for the equilibrium measure of classical random matrix ensembles \cite{bleher2008,eynard2015random} - which in most cases is equivalent to finding a minimizer of some logarithmic potential \cite{Saff2024} - and to compute some relevant probabilistic quantity, such as the gap probability and the largest eigenvalue/particle distribution \cite{Baik1999,Deift2000}. In connection to our work, RHPs were also applied to Muttalib--Borodin ensembles. In \cite{Kuijlaars2018TheLU,Molag2021}, the authors obtained the asymptotic behaviour of the correlation kernel in the case $\nu=\frac{1}{r}$, $r\in \N$, to do so, the authors rephrase this problem as a $(r+1)\times(r+1)$ RHP.  In \cite{Claeys2014,Charlier2022,Wang2022} the authors used this technique to obtain an explicit expression for the equilibrium measure of the Laguerre and Jacobi Muttalib--Borodin ensemble in the non-constrained one-cut regime, meaning that the equilibrium measure is supported on one segment $(a,b)$. Specifically, the authors find the minimizer $\mu(dx)\in \cP([0,1])$ - the space of probability measures in $(0,1)$-  of 

\[
I_V[\mu] = \frac{1}{2}\int\int_{\mathcal{I}^2} \ln(|x^\nu - y^\nu|)\mu(dx)\mu(dy) + \frac{1}{2}\int\int_{\mathcal{I}^2} \ln(|x - y|)\mu(dx)\mu(dy) + \int_\mathcal{I} V(x)\mu(dx)\,,
\]
where in \cite{Claeys2014} $\mathcal{I} = (0,+\infty)$, $V(x)$ satisfying some specific properties and $\nu\geq 1$, while in \cite{Charlier2022} $\mathcal{I} = (0,1)$, $V(x)=0$ and $\nu>0$. Following a standard procedure, they showed that the previous minimization problem is equivalent to a RHP involving two distinct functions

\[ g(z) = \int_{\mathcal{I}}\ln(|x -y|)\mu(dx)\,, \quad g_\nu(z) = \int_{\mathcal{I}}\ln(|x^\nu -y^\nu|)\mu(dx)\,.\]
To solve this problem in the case  $\nu\geq1$, the authors of \cite{Claeys2014} introduced the map $J_{c_0,c_1}(s)$ \eqref{eq:J_c0_c1} to transform the RHP for the function $g(z),g_\nu(z)$ into a RHP for only one function $M(z)$, this allowed them to find the explicit expression of $g(z),g_\nu(z)$ and $\mu(dx)$. In \cite{Charlier2022}, the author generalized this approach to the case $0<\nu<1$.
More recently, in \cite{wang2025biorthogonalpolynomialsrelatedquantum,Wang2025}, the authors considered a more general version of the Muttalib--Borodin ensemble. They obtained an explicit expression for the equilibrium measure via a vector-valued RHP and studied the transition regime between hard and soft-edge. 
In this paper, we enforce the RHP analysis to get an explicit expression of the equilibrium measure for a Jacobi-like Muttalib--Borodin ensemble in the non-constrained and constrained one-cut case, see Theorem \ref{thm:intro_main_shapes}.

The remaining part of the paper is organized as follows: in Section \ref{sec:ldp} we prove Theorem \ref{thm:mainLDP} and in section \ref{sec:rhp} we prove Theorem \ref{thm:intro_main_shapes} and Corollary \ref{cor:intro_identification}.
\begin{figure}[!t]
  \centering

  \begin{tikzpicture}[evaluate={%
      integer \Nleft, \Nright;
      \Nleft = 6;
      \Nright = 6;
    }, scale=.40]
    \draw[->] (-7,0)--(-7,14);
    \foreach \k in {0,...,14}{
      \node[left] at (-7,\k) {$\k$};
    }
    \foreach \k [parse=true] in {-\Nleft,...,\Nright}{%
      \draw[dashed,thick, color=gray] (\k,0)--(\k,14);
      \node[below] at (\k,0) {$\k$};
    }
    \node at (-5,6) [circle,fill,inner sep=1.5pt]{};
    \foreach \k in {5,8}{%
        \node at (-4,\k) [circle,fill,inner sep=1.5pt]{};
        }
    \foreach \k in {3,5,9}{%
        \node at (-3,\k) [circle,fill,inner sep=1.5pt]{};
        }
    \foreach \k in {2,3,6,9}{%
        \node at (-2,\k) [circle,fill,inner sep=1.5pt]{};
        }
    \foreach \k in {1,2,4,7,11}{%
        \node at (-1,\k) [circle,fill,inner sep=1.5pt]{};
        }
    \foreach \k in {0,1,3,5,9,13}{%
        \node at (0,\k) [circle,fill,inner sep=1.5pt]{};
        }
    \foreach \k in {1,2,5,7,10}{%
        \node at (1,\k) [circle,fill,inner sep=1.5pt]{};
        }
    \foreach \k in {2,4,7,9}{%
        \node at (2,\k) [circle,fill,inner sep=1.5pt]{};
        }
    \foreach \k in {3,6,9}{%
        \node at (3,\k) [circle,fill,inner sep=1.5pt]{};
        }
    \foreach \k in {5,7}{%
        \node at (4,\k) [circle,fill,inner sep=1.5pt]{};
        }
    \foreach \k in {6}{%
        \node at (5,\k) [circle,fill,inner sep=1.5pt]{};
        }
   
  \end{tikzpicture}
    \hspace{4em}
      \begin{tikzpicture}[evaluate={%
      integer \Nleft, \Nright;
      \Nleft = 6;
      \Nright = 6;
    }, scale=.40]
    \draw[-] (-7,0)--(-7,14);
    \draw[thick, color=gray] (-7.1,0)--(-6.9,0); 
    \node[left] at (-7,0) {$0$};
    \draw[thick, color=gray] (-7.1,7)--(-6.9,7); 
    \node[left] at (-7,7) {$0.5$};
    \draw[thick, color=gray] (-7.1,14)--(-6.9,14); 
    \node[left] at (-7,14) {$1$};
    \foreach \k [parse=true] in {-\Nleft,...,\Nright}{%
      \draw[dashed,thick, color=gray] (\k,0)--(\k,14);
      \node[below] at (\k,0) {$\k$};
    }
    \node at (-5,0.3679*14) [circle,fill,inner sep=1.5pt]{};
    \foreach \k in {0.4346*14,0.2636*14}{%
        \node at (-4,\k) [circle,fill,inner sep=1.5pt]{};
        }
    \foreach \k in {0.6065*14,0.4346*14,0.2231*14}{%
        \node at (-3,\k) [circle,fill,inner sep=1.5pt]{};
        }
    \foreach \k in {0.7165*14,0.6065*14,0.3679*14,0.2231*14}{%
        \node at (-2,\k) [circle,fill,inner sep=1.5pt]{};
        }
    \foreach \k in {0.8465*14,0.7165*14,0.5134*14,0.3114*14,0.16*14}{%
        \node at (-1,\k) [circle,fill,inner sep=1.5pt]{};
        }
    \foreach \k in {0.1145*14,0.2231*14,0.4346*14,0.6065*14,0.8465*14,1*14}{%
        \node at (0,\k) [circle,fill,inner sep=1.5pt]{};
        }
    \foreach \k in {0.8465*14,0.7165*14,0.4346*14,7,0.1889*14}{%
        \node at (1,\k) [circle,fill,inner sep=1.5pt]{};
        }
    \foreach \k in {0.7165*14,0.5134*14,0.3114*14,0.2231*14}{%
        \node at (2,\k) [circle,fill,inner sep=1.5pt]{};
        }
    \foreach \k in {0.6065*14,0.3679*14,0.2231*14}{%
        \node at (3,\k) [circle,fill,inner sep=1.5pt]{};
        }
    \foreach \k in {0.4346*14,0.3114*14}{%
        \node at (4,\k) [circle,fill,inner sep=1.5pt]{};
        }
    \foreach \k in {0.3679*14}{%
        \node at (5,\k) [circle,fill,inner sep=1.5pt]{};
        }
  \end{tikzpicture}
  \caption{To the left: the particle configuration $(l(t))_t$ on $\{-M+1,\dots,N-1\}\times\N$ corresponding to the plane partition in Figure \ref{fig:pp_mb}. To the right: the rescaled particle configuration $(x(t))_t$ on $\{-M+1,\dots,N-1\}\times[0,1]$.
  } 
  \label{fig:pp_ips}
\end{figure}

\paragraph{Acknowledgments.}
The authors want to thank  Dan Betea, Mattia Cafasso, and Tom Claeys for the fruitful discussions.
A.O. wants to thank Daniel Naie for his help with the pictures. A.O. and J.H. have met at SLMath (former MSRI) during the thematic semester ``Universality in random matrix theory and interacting particle systems'', where they have started to discuss the topic of large deviations for bi-orthogonal ensembles.
A.O. and G.M. thank the Institute Mittag-Leffler for offering an opportunity of an in-person discussion during the trimester ``Random Matrices and Scaling Limits''.
G.M. was partially supported by the Swedish Research Council under grant no. 2016-06596 while the author was in residence at Institut Mittag-Leffler in Djursholm, Sweden during the fall semester of 2024. A.O. was partially supported by the ULIS project (2023-09915) funded by  Region Pays de la Loire and by the ERC-2019-ADG Project: 884584 (LDRAM).

\section{Large deviation principle of the plane partition}
\label{sec:ldp}

In this section, we prove Theorem \ref{thm:mainLDP}.
Let us first analyze the two terms composing the rate function $I^{(\xi)}(\mu)$ \eqref{eq:functional}, one coming from the ``particle'' interaction term, one from the single ``particle'' potential (see interpretation of the plane partition as a particle system).

Taking the logarithm of the double product in~\eqref{eq:MBdistribution}, we get (to simplify the notation, we suppress the apex $(\xi)$ in $\ell_i^{(\xi)}$ and in $\widehat\mu^{(\xi)}_N$)

\begin{equation}
\ln\prod_{ 1 \leq i,j \leq L_\xi \atop i\neq j}(q^{\eta\ell_j}-q^{\eta\ell_i})(q^{\theta\ell_j}-q^{\theta\ell_i})
=\sum_{ 1 \leq i,j \leq L_\xi\atop i\neq j}\ln(q^{\eta\ell_j}-q^{\eta\ell_i})+\ln(q^{\theta\ell_j}-q^{\theta\ell_i})
\end{equation}
In the $q\to1^-$ limit~\eqref{eq:transformation}, this is equal to 
\begin{equation}
\sum_{ 1 \leq i,j \leq L_\xi \atop i\neq j}\ln(x_j^{\eta}-x_i^{\eta})+\ln(x_j^{\theta}-x_i^{\theta})
\approx N^2\left(\frac{L_\xi}{N}\right)^2\frac12 \left[\iint\limits_{x\neq y}\ln(x^\eta-y^\eta)d\widehat\mu_N(x)d\widehat\mu_N(y)+\iint\limits_{x\neq y}\ln(x^\theta-y^\theta)d\widehat\mu_N(x)d\widehat\mu_N(y)\right]\,.
\end{equation}
Now let us look at the contribution coming from the potential term \eqref{eq:weights_d}. We first observe that the common factor $a^{\ell_i}q^{(\eta+\theta)\ell_i}$ is negligible if compared to the other terms in the rate function.
The term $Q^t$ (or $\tilde Q^t$) contributes only with a linear factor, resulting in the term $M^{(\xi)}(\mu)$ in \eqref{eq:functional}.
So the non-trivial terms left to analyze are  the q-Pochhammer symbols \cite[Chapter 17.2]{NIST:DLMF}.
\begin{itemize}
\item[(i)] When $\xi\leq0$, we have
\begin{equation}
	\begin{aligned}
		&(q^{\theta(\ell_i+1-|\xi|N)};q^\theta)_{N(1-\gamma^2+|\xi|)}=\prod_{j=1}^{N(1-\gamma^2+|\xi|)}(1-q^{\theta(\ell_i-|\xi|N)}q^{\theta j})\\
		=&\prod_{j=1}^{N(1-\gamma^2+|\xi|)}\left(1-\exp\left(-\epsilon\left(\theta\left(-\frac{\ln x_i}{\epsilon}-|\xi|N+j\right)\right)\right)\right)=\prod_{j=1}^{N(1-\gamma^2+|\xi|)}\big(1-x_i^\theta e^{-\frac{\beta}{N}[-|\xi|N+j]}\big).
	\end{aligned}
\end{equation}
Once we take the logarithm
\begin{equation}
	\begin{aligned}
		\sum_{j=1}^{N(1-\gamma^2+|\xi|)}\ln\big(1-x_i^\theta e^{{\theta}|\xi|}e^{-\theta\frac{1+j}{N}}\big)\approx	N\int_0^{1-\gamma^2+|\xi|}\ln\big(1-x_i^\theta e^{-\beta|\xi|}e^{-{\beta\theta}s}\big)ds.
	\end{aligned}
\end{equation}

By the change of variable $u=|\xi|-s$ then, summing over $i$ and dividing by $N^2$ we recover the term $K^{(\xi)}$ in the result.
\item[(ii)] When $\xi\in(0,1-\gamma^2]$, we have
\begin{equation}
	\begin{aligned}
		&(q^{\theta(\ell_i+1});q^\theta)_{N(1-\gamma^2-\xi)}=\prod_{j=1}^{N(1-\gamma^2-\xi)}(1-q^{\theta(\ell_i+j)})\\
		=&\prod_{j=1}^{N(1-\gamma^2-\xi)}\Big(1-x_i^\theta e^{-\epsilon\theta j}\Big).
	\end{aligned}
\end{equation}
Once we take the logarithm
\begin{equation}
	\begin{aligned}
		\sum_{j=1}^{N(1-\gamma^2-\xi)}\ln\big(1-x_i^\theta e^{-\beta\theta\frac{j}{N}}\big)\approx	{N}\int_0^{1-\gamma^2-\xi}\ln\big(1-x_i^\theta e^{-{\beta\theta}s}\big)ds.
	\end{aligned}
\end{equation}

\item[(iii)] When $\xi\in(1-\gamma^2,1]$, we have
\begin{equation}
	\begin{aligned}
		&(q^{\eta(\ell_i+N+1-\xi N-\gamma^2 N)},q^\eta)_{N(\gamma^2-1+\xi)}=\prod_{j=1}^{N(\gamma^2-1+\xi)}(1-q^{\eta(\ell_i+j+N(1-\gamma^2-\xi))}q^{\theta j})\\
		=&\prod_{j=1}^{N(\gamma^2-1+\xi)}\Big(1-x_j^\eta e^{-\epsilon\eta(N(1-\gamma^2-\xi)+j)}\Big).
	\end{aligned}
\end{equation}
Once we take the logarithm
\begin{equation}
	\begin{aligned}
		\sum_{j=1}^{N(\gamma^2+\xi-1)}\ln\big(1-x_i^\eta e^{-\eta\beta(1-\xi-\gamma^2)}e^{-\beta\eta\frac{j}{N}}\big)\approx	{N}\int_0^{\gamma^2+\xi-1}\ln\big(1-x_i^\eta e^{-\eta(1-\xi-\gamma^2)}e^{-\beta\eta s}\big)ds.
	\end{aligned}
\end{equation}
We conclude by the change of variable $u=1-\xi-\gamma^2-s$.
\end{itemize}
This heuristic already shows that the functional $J^{(\xi)}$ in Theorem \ref{thm:mainLDP} is a good candidate for the rate function of the model. We first show that it is indeed a ``good'' rate function.
\begin{proposition}
	Consider $J^{(\xi)}$ in Theorem \ref{thm:mainLDP}, it is a good rate function, i.e. it is lower semi-continuous and its level sets $\{\mu : J^{(\xi)}(\mu)\leq C\}$ are compact. Furthermore, it is strictly convex, so it has a unique minimizer.
\end{proposition}
\begin{proof}
	We need to prove only that $I^{(\xi)}(\mu)$ \eqref{eq:functional} is lower semi-continuous. Indeed, since $\mathfrak{P}$ is compact for the weak topology, it follows automatically that the level sets are compact.
	
	Strict convexity of $I^{(\xi)}(\mu)$ comes from the fact that we  can write 
	\[ I^{(\xi)}(\mu) = \frac{\kappa^2}{2} \Big(\mathcal{E}(p_{\eta}* \mu) + \mathcal{E}(p_{\theta}* \mu) \Big)+K^{(\xi)}(\mu) +M^{(\xi)}(\mu)   \]
	where $p_a (x) = x^a$. We notice that $K^{(\xi)}(\mu)$ and $M^{(\xi)}(\mu)$ are linear terms and $\mathcal{E}(\mu):= - \int \ln| x- y| d \mu(x) d\mu(y)$,  is strictly convex (see the proof of Lemma 2.6.2 and in particular equation 2.6.19 in \cite{AGZ}).
	
	To prove the lower semi-continuity, we follow a standard argument (see once again the proof of Lemma 2.6.2 in ~\cite{AGZ}) and we approximate $I^{(\xi)}(\mu)$ by a continuous analogue denoted $I^{(\xi)}_{\texttt{M}}(\mu)$ obtained replacing in $I^{(\xi)}(\mu)$ $- \ln$ by $(-\ln )\wedge \texttt{M}$ for $\texttt{M}\geq 0$, here $x\wedge y = \min(x,y)$. Then, $I^{(\xi)}(\mu)=\sup_{\texttt{M}} I^{(\xi)}_{\texttt{M}}(\mu)$ and $I^{(\xi)}(\mu)$ is lower semi-continuous. 
    \end{proof}
	Let us denote $\N_t := [ M- L_\xi, + \infty[\cap \N$. We will not directly work with the measure $\P$ on the set of strictly increasing $N$-tuple $\ell$ but with the unrenormalized measure $\overline{\P}$ on $\N_t^{L_\xi}$ defined by

	\begin{equation} \overline{\P}( \ell) =  \prod_{1 \leq i,j \leq N }  | Q ^{\ell_j} - Q ^{\ell_i}|^{1/2} | \tilde{Q} ^{\ell_j} - \tilde{Q} ^{\ell_i}|^{1/2} \prod_{ 1 \leq i \leq N} w_d(\ell_i) \end{equation}

	Here a $L_\xi$-tuple sampled according to $\overline{\P}$ is not increasingly ordered a priori, but by symmetry of the formula, it is easy to see that sampling $\ell$ according to $\overline{\P}$ and reordering it is equivalent (up to renormalizing) to sampling $\ell$ by 
	$\P$. So sampling $\widehat{\mu}_N$ through $\overline{\P}$ is the same as sampling it through $\P$.
	For this measure, we will prove large deviation upper and lower bounds with rate function $I^{(\xi)}(\mu)$, which are stated in Lemmas \ref{lem:UB} and \ref{lem:LB}. From this we obtain that $1/N^2 \ln Z_d$ converges to $-\inf I^{(\xi)}(\mu)$ and consequently the large deviation priciple for $J^{(\xi)}$. 

\begin{remark}
One can rule deviations outside $\mathfrak{P}$. Indeed, if $ \mu \notin \mathfrak{P}$, then there is an interval $ I = ]a,b[$ such that $a > 0$ and $ b <1$ and such that $\mu( I) > \int_a^b (\beta x)^{-1} dx = (\ln b - \ln a)/ \beta$. However, if we call $(l_i)_{i\leq L_\xi}$ the increasing reordering of $(\ell_i)_{i \leq L_\xi}$ since $\widehat{\mu_N} = (L_\xi)^{-1} \sum_{i=1}^N \delta_{x_i}$ where $x_i = e^{ - \epsilon_N l_i}$, $\ell_{i+1} > l_{i}$ and $l_i\in \mathbb{N}_t$, then 
\[ \widehat{\mu}_N(I)= L_\xi^{-1} \#\{ i \in [1,N] :  -\frac{ \ln b}{ \epsilon_N} < l_i <  -\frac{ \ln a}{ \epsilon_N} \}  \leq \frac{1}{L_\xi} \Big( \frac{ \ln b - \ln a }{\epsilon_N} +1 \Big) \]
Since $\lim_{N \to \infty} N \epsilon_N = \beta $ there exists $c > 0$ such that for $N$ large enough, $\P[ \widehat{\mu}_N(I)  > \mu(I) - c ] = 0$, which implies that for any distance $d$ on $\mathfrak{P}$ which indices a metric in the weak topology, there is $c >0$ such that for $N$ large enough $\P[ d( \widehat{\mu}_N, \mu) \leq c] = 0$.
\end{remark}
  
To show that $I^{(\xi)}(\mu)$ is the LDP rate function for the sequence of measures $\widehat \mu_N$, we must show the so-called \textit{Large deviation upper and lower bounds}. Specifically, we must prove that for any $\mu \in \mathfrak{P},$ and $\delta > 0$ 
	\begin{equation}
    \label{eq:sup_inf_lim}
	    \lim_{\delta \to 0} \limsup_{N \to \infty}\frac{1}{N^2}  \ln \overline{\P} [ d( \widehat{\mu}_N, \mu) \leq \delta ]  \leq  - I^{(\xi)}(\mu)\,,\qquad  \lim_{\delta\to0}\liminf_{N\to\infty} \frac{1}{N^2} \ln \overline{\P}  [ d( \widehat\mu_N, \mu) \leq \delta]  \geq - I^{(\xi)}( \mu).
	\end{equation}
  Since the proof for the case {$\beta=0$} is more involved, we postpone it to the end of the section. Here we consider the case $\beta > 0 $.
{\begin{remark} At several points during the proof, we will for convenience's sake abuse the notations and identify $t/N,L_\xi/N, N\epsilon, M/N $ to their respective limits, $\xi,\kappa,\gamma^2$ and $\beta$. Since all those limits are positive and finite, this has not consequence on the proof as it only introduces errors of order $\exp(o(N^2))$. When we consider the case $\beta =0$, we detail the necessary adaptations.
    \end{remark}}
  
    We split the proof of the inequalities \eqref{eq:sup_inf_lim} in the following lemmas.

\begin{lem}[Large deviation upper bound]\label{lem:UB}
	For any $\mu \in \mathfrak{P},$ and $\delta > 0$ 
	\[ \lim_{\delta \to 0} \limsup_{N \to \infty}\frac{1}{N^2}  \ln \overline{\P} [ d( \widehat{\mu}_N, \mu) \leq \delta ]  \leq  - I^{(\xi)}(\mu) \]
\end{lem}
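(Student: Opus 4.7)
The plan is to follow the classical Coulomb-gas upper-bound scheme, adapted to the present discrete biorthogonal weights. After the substitution $x_i = e^{-\epsilon \ell_i}$ with $\epsilon\sim\beta/N$, the heuristic computation performed immediately before the lemma already shows that
\[
\log\overline{\P}(\ell) \;=\; \tfrac{1}{2}\sum_{i\neq j}\bigl[\log|x_i^\eta-x_j^\eta|+\log|x_i^\theta-x_j^\theta|\bigr] \;+\; \sum_{i=1}^{L_\xi}\log w_d(\ell_i).
\]
The double sum equals $L_\xi^2\iint_{x\neq y}[\,\cdot\,]\,d\widehat\mu_N d\widehat\mu_N$ and thus, modulo the removed diagonal, produces $N^2 H^{(\xi)}(\widehat\mu_N)$. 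The weight factor splits into a linear piece giving $N^2 M^{(\xi)}(\widehat\mu_N)+o(N^2)$ and a log--Pochhammer piece which, by the Riemann-sum identification of the heuristic, matches $N^2 K^{(\xi)}(\widehat\mu_N)+o(N^2)$.

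Because $\log$ is unbounded below both on the diagonal and at the upper endpoint of the support of $\widehat\mu_N$ (where the argument of the Pochhammer approaches $1$), I would next introduce the standard regularization $\log_M:=\log\vee(-M)$ and the truncated functional $I^{(\xi)}_M$ obtained by replacing $\log$ by $\log_M$ everywhere in $H^{(\xi)}$ and $K^{(\xi)}$. Since $\log\le\log_M$ pointwise, the substitution yields an upper bound on $\log\overline{\P}(\ell)$; adding back the missing diagonal now costs only $L_\xi M=O(N)$, which is negligible at speed $N^2$. In the same step one verifies, using the Lipschitz character of $\log_M$, that the discrete log--Pochhammer is uniformly $o(1)$-close to the integral defining $K^{(\xi)}_M$. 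Combined with the (weak) continuity of $H^{(\xi)}_M,K^{(\xi)}_M,M^{(\xi)}$ on $\mathfrak{P}$, this gives the deterministic estimate
\[
\log\overline{\P}(\ell)\;\le\; N^2\bigl[H^{(\xi)}_M+K^{(\xi)}_M+M^{(\xi)}\bigr](\widehat\mu_N) + o(N^2)
\]
uniformly over tuples $\ell$ with $\widehat\mu_N\in B(\mu,\delta)$.

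The number of such $\ell$ is at most $\binom{CN}{L_\xi}=e^{o(N^2)}$ since every coordinate lies in a range of size $O(N)$ determined by the support $[0,e^{-\beta(\gamma^2-\kappa)}]$ of the empirical measure. Summing this cardinality bound against the energy bound, taking $\tfrac{1}{N^2}\log$, then $\limsup_{N\to\infty}$ and $\delta\to 0$ in that order, yields
\[
\lim_{\delta\to 0}\limsup_{N\to\infty}\frac{1}{N^2}\log\overline{\P}\bigl[d(\widehat\mu_N,\mu)\le\delta\bigr]\;\le\; -I^{(\xi)}_M(\mu),
\]
and the claim follows by letting $M\to\infty$ together with the monotone convergence $I^{(\xi)}_M\nearrow I^{(\xi)}$ recorded in the preceding proposition.

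The main obstacle I expect is the uniform control of the log--Pochhammer near its singular endpoint: the constraint $d\mu/dx\le(\beta x)^{-1}$ built into $\mathfrak{P}$ is exactly what prevents a catastrophic accumulation of mass there, but it must be combined carefully with the truncation $\log_M$ to turn the pointwise Riemann-sum convergence into a uniform estimate and to secure the required upper semi-continuity of the truncated functional. Once that uniform estimate is in place, the rest of the argument (diagonal bookkeeping, cardinality bound, and the standard order of the three limits in $N,\delta,M$) is routine.
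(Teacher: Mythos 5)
Your overall scheme is the same as the paper's: truncate the logarithmic interaction from below (the paper's $f_M=f\wedge M$ is your $\log_M$), pay an $O(NM)$ price to restore the diagonal, compare the discrete potential sums with the integrals defining $K^{(\xi)}$ and $M^{(\xi)}$, use (semi)continuity of the truncated functional on the ball $B(\mu,\delta)$, and send $N\to\infty$, $\delta\to 0$, $M\to\infty$ in the standard order. Two of your steps differ only cosmetically from the paper: the paper does not truncate $K^{(\xi)}$ at all, but instead uses the monotonicity of $-\ln$ to bound the Riemann sum by the integral in the one direction needed for an upper bound (this is cleaner than your ``uniformly $o(1)$-close'' claim, which is stronger than necessary), and it phrases the final limit as an optimization over levels $L<I^{(\xi)}(\mu)$ rather than a monotone limit in $M$.

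There is, however, one genuine gap: your counting step. You bound the number of admissible tuples $\ell$ by $\binom{CN}{L_\xi}$ on the grounds that ``every coordinate lies in a range of size $O(N)$ determined by the support.'' But the support constraint $x_i\le e^{-\beta(\gamma^2-\kappa)}$ only gives a \emph{lower} bound $\ell_i\gtrsim N(\gamma^2-\kappa)$; nothing prevents individual particles from having arbitrarily large $\ell_i$ (i.e.\ $x_i$ arbitrarily close to $0$) while $\widehat\mu_N$ stays in $B(\mu,\delta)$, so the index set $\N_t^{L_t}$ over which you sum is infinite and the cardinality bound is false. The paper resolves exactly this by \emph{not} absorbing the common factor $a^{\ell_i}(Q\tilde Q)^{\ell_i/2}$ of the weight into the energy functional: it is kept outside $W_N$, the energy bound is applied to the rest, and the remaining sum factorizes into $L_t$ convergent geometric series, $\sum_{\ell\in\N_t^{L_t}}\prod_i a^{\ell_i}(Q\tilde Q)^{\ell_i/2}=\bigl((aQ\tilde Q)^{M-L_t}/(1-aQ\tilde Q)\bigr)^{L_t}=\exp(O(N\ln N))$. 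You need this (or an equivalent tail estimate ruling out mass escaping to $\ell_i=\infty$) to make the summation step legitimate; with it, the rest of your argument goes through. A minor additional imprecision: $M^{(\xi)}(\mu)\propto\int\ln x\,d\mu$ is not weakly continuous on $\mathfrak P$ (only semicontinuous in one direction), but semicontinuity in the correct direction is all that is needed, and the paper invokes the same.
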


\begin{proof}

To simplify the notation we drop the apex $(\xi)$.
We let 
\begin{equation} f(x,y) = - \frac{1}{2} \Big( \ln|x^{\theta} - y^{\theta}| + \ln|x^{\eta} - y^{\eta}| \Big)  
\end{equation}
and for $\texttt{M} > 0$ 
\begin{equation} f_{\texttt{M}}(x,y) = f(x,y) \wedge \texttt{M}  \,.\end{equation}
Using this notation, we have that 
\begin{equation} H^{(\xi)}(\mu) = \kappa^2 \int\int f(x,y) d \mu(x) d\mu(y).\end{equation}
For $\texttt{M} >0$ and $\mu \in \mathfrak{P}$, we let 
\begin{equation} H^{(\xi)}_{\texttt{M}}(\mu) :=  \kappa^2  \int\int f_{\texttt{M}}(x,y) d \mu(x) d\mu(y). \end{equation}
For any $N \in \N_t$ every $\mu \in \frak{P}$ an  $\ell \in \N_t^{L_\xi}$, we denote $\mu_{\ell} = L_\xi^{-1} \sum_{i=1}^{L_\xi} \delta_{x_i}$. Then using the definition of $\overline{\P}$ we can write down
\begin{equation}  \overline{\P}[ d(\widehat\mu_N,\mu) \leq \delta] = \sum_{ l \in \N_t^{L_\xi}} \exp (-  N^2 W_N(\ell) ) \mathds{1}_{\{ d(\mu_{\ell}, \mu) \leq \delta\}} \prod_{1 \leq i \leq L_\xi} a^{\ell_i} (Q\tilde{Q})^{\ell_i/2} \end{equation} 
with $W_N( \ell) = \infty$ if $\ell_i = \ell_j$ for some $i \neq j$ and otherwise:
\begin{equation}
	W_N(\ell) = W_N^{(1)}(\ell) + W_N^{(2)}(\ell), 
\end{equation}
where
\begin{equation}  W_N^{(1)}( \ell) =  - \frac{1}{2 N^2} \sum_{\substack{1 \leq i ,  j \leq L_\xi \\ i \neq j } }  \ln | q^{l_i\theta} - q^{l_j\theta}| + \ln| q^{l_i\eta} - q^{l_j\eta}|\,, \qquad
	W_N^{(2)}(\ell)=- \frac{1}{N^2} \ln {\frac{w_d(\ell)}{a^\ell(Q\tilde Q)^\frac{\ell}{2}}} \,.
\end{equation}
After the change of variable $\ell_i=-\frac{N}{\beta}\ln x_i$, we obtain the previous expressions in terms of $x_i$ as (with a slight abuse of notation)
\begin{align}  
&W_N^{(1)}(x) =  - \frac{1}{2 N^2} \sum_{\substack{1 \leq i ,  j \leq L_\xi \\ i \neq j } }  \ln | x_i^{\theta} - x_j^{\theta}| + \ln| x_i^{\eta} - x_j^{\eta}| ,
\\
\label{eq:W2N}
&W_N^{(2)}(x)=- \frac{\eta |\xi|}{N} \sum_{i=1}^{\kappa N}\ln x_i - \frac{1}{N^2} \sum_{i=1}^{\kappa N}\sum_{j=1}^{N(1-\gamma^2-|\xi|)}\ln (1-x_i^\theta e^{-\beta |\xi|} e^{-\beta\theta\frac{j}{N}})
\end{align}
for case $(i)$, and analogously for the other two cases.
For any $\texttt{M} >0$ we have that 

\begin{equation}
	W^{(1)}_N(x) \geq H_{\texttt{M}}( \mu_{\ell}) - L_\xi^{-1} \texttt{M} .
\end{equation} 
Since $-\ln x$ is decreasing in $x$, we can bound from above the Riemann sums in \eqref{eq:W2N} by the integral and obtain that 
\begin{equation}
	W^{(2)}_N(x) \geq K(\mu_{\ell})+M(\mu_{\ell}).
\end{equation}
Therefore, we can then write 

\begin{equation}  \label{eq:UB1}
	\overline{\P}[ d(\widehat\mu_N,\mu) \leq \delta] \leq  \sum_{ l \in \N_t^{L_\xi} : d(\mu_{\ell},\mu) \leq \delta} \exp ( - N^{2} H_{\texttt{M}}(\mu_\ell) + \texttt{M} L_\xi +  K(\mu_{\ell})+M(\mu_{\ell}))   \prod_{1 \leq i \leq {L_\xi}} a^{\ell_i}(Q\tilde Q)^{\frac{\ell_i}{2}} 
\end{equation} 
Now choose $L>0$ such that $ L < I(\mu)$. Since $H(\mu) = \sup_{\texttt{M}} H_{\texttt{M}}(\mu) $, there is $\texttt{M} >0$ such that $L < H_{\texttt{M}}(\mu)+K(\mu)+M(\mu)$. 
 Then using the continuity of $H_{\texttt{M}}$ and the lower semi-continuity of $K$ and $M$, there is $\delta >0$ such that $H_{\texttt{M}}(\mu ') + K(\mu')+M(\mu')> L $ for any $\mu'$ such that $d(\mu',\mu) \leq \delta$.  So, putting everything together we get 
\begin{equation}  \label{eq:UB2}
	\overline{\P}[ d(\widehat\mu_N,\mu) \leq \delta] \leq  \sum_{ l \in \N_t^{L_\xi}} \exp ( - N^{2} L + \texttt{M} L_\xi)   \prod_{1 \leq i \leq {L_\xi}} a^{\ell_i}(Q\tilde Q)^{\frac{\ell_i}{2}}.
\end{equation}
Finally we use that 
\begin{equation} \label{eq:UB3} \sum_{ l \in \N_t^{L_\xi}} \prod_{1 \leq i \leq {L_\xi}} a^{\ell_i}(Q\tilde Q)^{\frac{\ell_i}{2}} = \left(\sum_{i=M-L_\xi}^{+\infty}  a^{i}(Q\tilde Q)^{\frac{i}{2}}\right)^{L_\xi} = \left( \frac{(a Q \tilde{Q})^{ M -L_\xi}}{ 1 - a Q \tilde{Q} }\right)^{L_\xi} = \exp ( O( N \ln N)).  \end{equation} 
In the end we have 

\begin{equation} \limsup_{N \to \infty} \frac{1}{N^2} \ln \overline{\P}[ d(\widehat\mu_N,\mu) \leq \delta] \leq - L. \end{equation} 
Since this is valid for every $L < I(\mu)$, we get our upper bound.

\end{proof}

\begin{lem}[Large deviation lower bound]\label{lem:LB}
	For any $\delta > 0$ and $\mu \in \mathfrak{P}$ we have for any distance $d$ that metricizes the weak topology on $\mathfrak{P}$
	
	\begin{equation}  \liminf_N \frac{1}{N^2} \ln \overline{\P}  [ d( \widehat\mu_N, \mu) \leq \delta]  \geq - I^{(\xi)}( \mu). \end{equation} 
\end{lem}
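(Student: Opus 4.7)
The plan is to produce, for a suitable regularization of $\mu$, a large collection $\Omega_N$ of configurations $\ell$ whose empirical measure $\mu_\ell$ lies within $\delta$ of $\mu$ and whose combined unnormalized weight under $\overline{\P}$ is at least $\exp(-N^2 I^{(\xi)}(\mu) + o(N^2))$. The overall architecture mirrors the classical lower bound for $\beta$-ensembles (e.g.\ Lemma 2.6.2 in \cite{AGZ}), with modifications to accommodate the discrete spacing constraint $\ell_{i+1}\geq \ell_i+1$ and the two Vandermonde-like interactions $\ln|x^\eta - y^\eta|$, $\ln|x^\theta - y^\theta|$.

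First I would reduce to $\mu$ satisfying the strict constraint $\frac{d\mu}{dx}(x) \leq (1-\eta_0)(\beta x)^{-1}$ on its support for some $\eta_0>0$, and supported in $[\delta_0, e^{-\beta(\gamma^2-\kappa)}-\delta_0]$ for some $\delta_0>0$. Any $\mu\in\mathfrak{P}$ can be approximated by such measures via convex combinations with a slightly shrunk uniform-density measure with $\frac{d\nu}{dx} = (\beta x)^{-1}\mathbbm{1}_{[\delta_0,e^{-\beta(\gamma^2-\kappa)}-\delta_0]}$, and one checks using the lower semi-continuity of $I^{(\xi)}$ and continuity of the linear terms $K^{(\xi)}, M^{(\xi)}$ that $I^{(\xi)}(\mu_n)\to I^{(\xi)}(\mu)$. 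For such regularized $\mu$, define the quantile points $x_i^* = \inf\{x : \mu([x, e^{-\beta(\gamma^2-\kappa)}]) \leq (i-\tfrac12)/L_\xi\}$ and set $\ell_i^* = \lfloor -\ln(x_i^*)/\epsilon\rfloor$. The strict constraint in Step 1 guarantees a uniform minimum gap $\ell_{i+1}^*-\ell_i^*\geq 1+c$ for some $c=c(\eta_0)>0$. Let $\Omega_N$ be the set of configurations with $\ell_i \in \{\ell_i^*, \ell_i^*+1, \ldots, \ell_i^*+\lfloor c N^{1/2}\rfloor\}$; then $|\Omega_N| = \exp(o(N^2))$ and every $\ell\in\Omega_N$ satisfies $d(\mu_\ell,\mu)<\delta$ for $N$ large.

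For each $\ell\in\Omega_N$, I would estimate the weight as follows. The interaction $W_N^{(1)}(\ell)$ is a Riemann sum approximation of $H^{(\xi)}(\mu)$; because of the uniform minimum spacing guaranteed by Step 1, the near-diagonal pairs $i\simeq j$ contribute at most $O(L_\xi\ln L_\xi) = o(N^2)$ to $N^2 W_N^{(1)}$, and the remaining off-diagonal pairs give $H^{(\xi)}(\mu) + o(1)$ by uniform continuity of $(x,y)\mapsto \ln|x^\eta-y^\eta|+\ln|x^\theta-y^\theta|$ away from the diagonal on the compact support. For $W_N^{(2)}(\ell)$, since $-\ln x$ and $\ln(1-x^\theta e^{-\beta\theta u})$ are monotone in the relevant ranges, the Riemann sums in \eqref{eq:W2N} converge (from above and below) to $K^{(\xi)}(\mu)+M^{(\xi)}(\mu)$; the residual factor $\prod_i a^{\ell_i}(Q\tilde Q)^{\ell_i/2}$ is $\exp(O(N\ln N))$ as in \eqref{eq:UB3}, so it is absorbed into the $o(N^2)$ term. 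Summing over $\Omega_N$ yields
\begin{equation}
\overline{\P}[d(\widehat\mu_N,\mu)\leq\delta] \geq |\Omega_N|\exp\bigl(-N^2 I^{(\xi)}(\mu) + o(N^2)\bigr),
\end{equation}
from which the claim follows after taking $\liminf N^{-2}\ln$ and then letting the regularization parameters $\eta_0,\delta_0\to 0$.

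The main obstacle is the interplay between the discrete spacing constraint and the logarithmic singularity: without the strict bound $\frac{d\mu}{dx}\leq (1-\eta_0)(\beta x)^{-1}$, the quantile points $\ell_i^*$ could be forced to coincide (or merely differ by $1$), which would make the interaction sum diverge and prevent Riemann-sum approximation. This is precisely what the regularization step is designed to circumvent, and justifying that such regularizations exist with $I^{(\xi)}(\mu_n)\to I^{(\xi)}(\mu)$ requires a small dedicated lemma exploiting the structure of $\mathfrak{P}$. The case $\beta=0$ (postponed in the paper) is genuinely different because the admissible-measure constraint degenerates and the counting term $|\Omega_N|$ must be replaced by a volume term of the appropriate order.
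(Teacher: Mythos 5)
Your overall architecture coincides with the paper's: regularize $\mu$ (compact support, controlled density), place particles at quantiles of the regularized measure, control the logarithmic interaction through the minimum spacing forced by the density constraint, and treat the potential term by monotone Riemann sums. Two of your intermediate steps, however, would fail as literally written. The set $\Omega_N$ obtained by letting each $\ell_i$ range independently over $\{\ell_i^*,\dots,\ell_i^*+\lfloor cN^{1/2}\rfloor\}$ contains many configurations with $\ell_i=\ell_j$: adjacent quantile gaps are $O(1)$, far smaller than $N^{1/2}$, so collisions are unavoidable, those configurations have weight zero, and "every $\ell\in\Omega_N$" does not satisfy your weight estimate. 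Since $|\Omega_N|=e^{O(N\ln N)}=e^{o(N^2)}$ the collection buys nothing at speed $N^2$ anyway; in this discrete setting a single configuration carries positive mass, and the paper indeed lower-bounds $\overline{\P}[d(\widehat\mu_N,\mu)\le\delta]$ by the weight of the single quantile configuration $\tilde{\ell}^N$.

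The second issue is the regularization. A convex combination $(1-\eta_0)\mu+\eta_0\nu$ with $\nu$ of density exactly $(\beta\kappa x)^{-1}$ (note the constraint in $\mathfrak{P}$ is $(\beta\kappa x)^{-1}$, not $(\beta x)^{-1}$) still satisfies only the non-strict bound; and even granting a strict bound $(1-\eta_0)(\beta\kappa x)^{-1}$, consecutive quantile spacings in logarithmic coordinates are only $\ge \epsilon/(1-\eta_0)$, which yields integer gaps $\ell_{i+1}^*-\ell_i^*\ge 1$ but not $\ge 1+c$ for small $\eta_0$. More importantly, the strict bound is unnecessary: the paper handles saturated densities directly via the quantile comparison of Lemma 2.16 in \cite{DasDim2022},
\begin{equation}
\sum_{1\le i<j\le L_t}\ln\big(\epsilon(\tilde{\ell}_j^N-\tilde{\ell}_i^N)\big)\;\ge\;\sum_{1\le i<j\le L_t}\ln\big(y_{L_t-i+1}-y_{L_t-j+1}\big)\;\ge\; L_t^2\int\int_{x>y}\ln|x-y|\,d\lambda(x)\,d\lambda(y)-O(N\ln N),
\end{equation}
which requires only the non-strict bound on the density of $\lambda=(-\ln)_*\mu$. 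So the "small dedicated lemma" you defer (existence of strict regularizations with converging rate function, plus the uniform gap) is where the real work of your variant would lie, and the paper's route avoids it entirely; the only approximation it needs is the compact-support truncation of Proposition \ref{prop:CompactSupport}. The rest of your sketch (near/far diagonal split for $W_N^{(1)}$, monotone Riemann sums for $W_N^{(2)}$, absorbing $\prod_i a^{\ell_i}(Q\tilde Q)^{\ell_i/2}$ into $e^{o(N^2)}$) matches the paper and is sound once the spacing issue is settled.
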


\begin{proof}
To simplify the notation, we drop the apex ${(\xi)}$. It is sufficient to find a sequence $(\tilde{\ell}^N)_{N \in \N}$ such that $\tilde{\ell}^N \in \N_t^{L_\xi}$ such that the sequence $\nu_N = L_\xi^{-1}  \sum_{i=1}^{L_\xi} \delta_{ e^{ - \epsilon \tilde{\ell}^N_i}}$ converges weakly toward $\mu$ and:
\begin{equation}  \liminf_N \frac{1}{N^2} \ln \overline{\P}[ \tilde{\ell}^N ] \geq - I( \mu). \end{equation}

{First, we can assume that $\mu( \{ 0 \}) = 0$ (if not we have $I( \mu) = + \infty$ and the result is obvious). Then we consider the measure $\lambda$ defined on $\R^+$ as $\lambda([a,b]) = \mu([ e^{-b}, e^{ -a}])$. In particular, $\lambda$ is a probability measure on $[ \beta(\gamma^2 - \kappa), + \infty[$ such that $\lambda \ll \text{Leb}_{\R^+}$ and its density is less that $(\beta \kappa)^{-1}$. Let us call $\mathfrak{Q}$ the set of such measures. The bijection that to such a measure $\mu$ associate the measure $\lambda$ and its inverse are both continuous for the weak topology on $\mathfrak{P} \setminus \{ \mu: \mu(\{0 \}) >0 \}$ and $\mathfrak{Q}$ (it is indeed the push-forward by the function $- \ln$, which is a continuous function from $]0, e^{ - \beta(\gamma^2 - \kappa)}]$ to $[ \beta(\gamma^2 - \kappa), + \infty[$).}

We then have using a change a variables that
\[ I(\mu) = \tilde{I}( \lambda) = - \tilde{H}( \lambda) - \tilde{K}( \lambda) - \tilde{M}( \lambda), \]
where

\[\tilde{H}( \lambda) =  \frac{\kappa^2}{2} \int \ln | e^{ - \theta x} - e^{ - \theta y}| d \lambda(x) d \lambda(y) + \frac{\kappa^2}{2} \int \ln | e^{ - \eta x} - e^{ - \eta y}| d \lambda(x) d \lambda(y) 
\]
and where if $\xi \leq 0$,

	\begin{equation}\tilde{K}(\lambda)=\kappa\int\int_{\gamma^2-1}^{|\xi|}\ln(1-e^{(-x + \beta u)\theta})du\, d\lambda(x);\end{equation}
 \begin{equation} \tilde{M}(\lambda)= \kappa \eta |\xi| \int  x \, d\lambda(x);
 \end{equation}
and with similar definitions for $0 <\xi \leq 1 - \gamma^2$ and $\xi > 1- \gamma^2$. 
Let us assume that the lower bound holds for $\mu \in\mathfrak{P}$ such that $\lambda$ is compactly supported in some interval $[\beta(\gamma^2 - \kappa), \texttt{M}]$ for some $\texttt{M} >0$. We will verify this statement at the end of the proof in \ref{lem:useless_lemma}.
Also, let us assume that the following proposition is true:
\begin{proposition}\label{prop:CompactSupport}
	Let $\lambda \in \mathfrak{P}$ such that $\tilde{I}( \lambda) < + \infty$. There exists a family of compactly supported measure $(\lambda_{\texttt{M}})_{\texttt{M} >0}$ with densities bounded above by $( \beta \kappa)^{-1}$ such that $\lambda_{\texttt{M}}$ converges weakly toward $\lambda$ and $\tilde{I}( \lambda_{\texttt{M}})$ converges toward $\tilde{I}( \lambda)$ when $\texttt{M}$ goes to $\infty$.
\end{proposition} 
Let $\lambda \in \mathfrak{P}$ such that $\tilde{I}^{(\xi)}( \lambda) < + \infty$. 
Let $\varepsilon >0$ and $\delta >0$. Using the Proposition above, we can find $\lambda' \in \mathcal{P}$ that is compactly supported and  such that $\tilde{I}^{(\xi)}( \lambda') \leq \tilde{I}^{(\xi)}( \lambda) + \varepsilon$. 
Furthermore, if we denote $\mu'$ the measure defined by $\mu'([a,b])= \lambda'([- \ln a, -\ln b])$ since the function $\lambda \mapsto \mu$ is continuous, we can also assume that $\lambda'$ is such that $d( \mu',\mu) \leq \delta /2 $.

Then, there exists a sequence $(\tilde{\ell}^N)_{N \in \N}$ such that 
$\tilde{\ell}^N \in \N_t^{L_\xi}$ such that the sequence $\nu_N = L_\xi^{-1}  \sum_{i=1}^{L_\xi} \delta_{ e^{ - \epsilon \tilde{\ell}^N_i}}$ converges weakly toward $\mu'$. For $N$ large enough we have that $d(\nu_N,\mu')\leq \delta/2$ which implies $d(\nu_N,\mu)\leq \delta$. It follows then that :
\begin{eqnarray*}
\frac{1}{N^2} \ln \overline{\P}[ d(\hat{\mu}_N, \mu) \leq \delta  ] & \geq & \frac{1}{N^2} \ln \overline{\P}[ d(\hat{\mu}_N, \mu) \leq \delta  ] \\
& \geq & \frac{1}{N^2} \ln \overline{\P}[ \hat{\mu}_N = \nu_N ]  \\
& \geq & \frac{1}{N^2} \ln \overline{\P}[ \tilde{\ell}^N]
\end{eqnarray*}
Taking the the $\liminf$ in the inequality above, we have that 
\begin{eqnarray*}
\liminf_{N \to \infty}\frac{1}{N^2} \ln \overline{\P}[ d(\hat{\mu}_N, \mu) \leq \delta  ] & \geq & -I( \mu') \\
& \geq & -I( \mu) - \varepsilon
\end{eqnarray*}
Optimizing in $\varepsilon >0$ gives us the result. 

\end{proof}

Now we prove the two claimed statements.

\begin{lem}\label{lem:useless_lemma} The lower bound holds for $\mu \in\mathfrak{P}$ such that $\lambda$ is compactly supported in some interval $[0, \texttt{M}]$ where $\texttt{M}>0$. 
	\end{lem}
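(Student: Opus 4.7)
The plan is to produce an explicit deterministic configuration $\tilde\ell^N\in\N_t^{L_t}$ whose rescaled empirical measure $\nu_N := L_t^{-1}\sum_i\delta_{e^{-\epsilon_N\tilde\ell^N_i}}$ converges weakly to $\mu$ and such that $N^{-2}\ln\overline{\P}[\tilde\ell^N]\to -I^{(\xi)}(\mu)$; since $\overline{\P}[d(\widehat\mu_N,\mu)\leq\delta]\geq\overline{\P}[\tilde\ell^N]$ for $N$ large enough, this yields the desired lower bound. I will build $\tilde\ell^N$ by quantile discretization of $\lambda$. Let $F_\lambda$ denote the CDF of $\lambda$ and set $y_i^N := F_\lambda^{-1}((i-\tfrac12)/L_t)$ for $1\leq i\leq L_t$. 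The density constraint $d\lambda/dy\leq(\beta\kappa)^{-1}$ forces $y_{i+1}^N-y_i^N\geq\beta/N+o(1/N)$, so after rescaling by $\epsilon_N^{-1}$ the quantiles are separated by at least $1+o(1)$. Rounding (with finitely many local shifts to guarantee strict monotonicity and membership in $\N_t$) gives a strictly increasing sequence $\tilde\ell^N_1<\cdots<\tilde\ell^N_{L_t}$ with $\epsilon_N\tilde\ell^N_i = y_i^N + O(1/N)$ uniformly in $i$; weak convergence $\nu_N\to\mu$ is then immediate.

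Writing $\overline{\P}[\tilde\ell^N]=\exp(-N^2 W_N(\tilde\ell^N))\prod_i a^{\tilde\ell^N_i}(Q\tilde Q)^{\tilde\ell^N_i/2}$ and noting that $\prod_i a^{\tilde\ell^N_i}(Q\tilde Q)^{\tilde\ell^N_i/2}=e^{O(N)}=e^{o(N^2)}$, it suffices to show $W_N(\tilde\ell^N)\to I^{(\xi)}(\mu)$. For the potential term $W_N^{(2)}$, the integrands entering $K^{(\xi)}$ and $M^{(\xi)}$ (namely $\ln(1-x^\theta e^{-\beta\theta u})$-type expressions and $\ln x$) are continuous on the compact region of integration determined by $\operatorname{supp}(\lambda)\subset[\beta(\gamma^2-\kappa),M]$, with only an integrable boundary singularity in the critical regimes; the quantile construction ensures standard two-variable Riemann-sum convergence and hence $W_N^{(2)}(\tilde\ell^N)\to -K^{(\xi)}(\mu)-M^{(\xi)}(\mu)$.

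The main obstacle is the interaction term $W_N^{(1)}(\tilde\ell^N)=-\frac{1}{2N^2}\sum_{i\neq j}\bigl(\ln|q^{\eta\tilde\ell^N_i}-q^{\eta\tilde\ell^N_j}|+\ln|q^{\theta\tilde\ell^N_i}-q^{\theta\tilde\ell^N_j}|\bigr)$, whose kernel is singular on the diagonal. I will split the double sum at scale $|i-j|=rL_t$ for a small parameter $r>0$ to be sent to zero after $N\to\infty$. On the far regime $|i-j|\geq rL_t$ the spacing $|\tilde\ell^N_i-\tilde\ell^N_j|$ keeps both logarithms uniformly bounded, and Riemann-sum convergence yields
\[
-\frac{1}{2N^2}\sum_{|i-j|\geq rL_t}\bigl(\ln|q^{\eta\tilde\ell^N_i}-q^{\eta\tilde\ell^N_j}|+\ln|q^{\theta\tilde\ell^N_i}-q^{\theta\tilde\ell^N_j}|\bigr) \;\longrightarrow\; -\frac{\kappa^2}{2}\iint_{|F(x)-F(y)|\geq r}\bigl(\ln|x^\theta-y^\theta|+\ln|x^\eta-y^\eta|\bigr)\,d\mu(x)\,d\mu(y),
\]
which tends to $-H^{(\xi)}(\mu)$ as $r\to 0$ by dominated convergence, since the kernel is $L^1(\mu\otimes\mu)$ for $\mu\in\mathfrak P$ (bounded density on a compact set). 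On the near regime $0<|i-j|<rL_t$ I exploit the elementary inequality $|q^{\eta k_1}-q^{\eta k_2}|\geq q^{\eta\max(k_1,k_2)}\eta\epsilon_N|k_1-k_2|/2$, valid for distinct integers with $\eta\epsilon_N|k_1-k_2|\leq 1$, giving $-\ln|q^{\eta\tilde\ell^N_i}-q^{\eta\tilde\ell^N_j}|\leq\ln N+C-\ln|i-j|$. Summing, the near contribution to $W_N^{(1)}(\tilde\ell^N)$ is bounded by $O(r\ln(1/r))$ because the sum $\sum_{k=1}^{rL_t}\ln k\sim rL_t\ln(rL_t)$ precisely cancels the naive $\ln N$ term; this vanishes as $r\to 0$. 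Sending $N\to\infty$ and then $r\to 0$ produces $W_N^{(1)}(\tilde\ell^N)\to -H^{(\xi)}(\mu)$ and thus $W_N(\tilde\ell^N)\to I^{(\xi)}(\mu)$, completing the lower bound for compactly supported $\lambda$.
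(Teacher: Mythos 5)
Your proposal is correct and shares the paper's overall architecture: both construct the deterministic configuration $\tilde{\ell}^N$ by discretizing the quantiles $y_i^N$ of $\lambda$, both note that the prefactor $\prod_i a^{\tilde{\ell}^N_i}(Q\tilde Q)^{\tilde{\ell}^N_i/2}=e^{O(N)}$ is negligible at speed $N^2$, and both treat the potential term $W_N^{(2)}$ by Riemann-sum convergence on the compact support of $\lambda$. Where you genuinely diverge is in the singular interaction term $W_N^{(1)}$. You use a Ben Arous--Guionnet-style near/far truncation at scale $|i-j|=rL_t$: the far part converges as a Riemann sum of a bounded continuous kernel to a truncated integral, recovered in full as $r\to 0$ by dominated convergence, while the near part is controlled by the spacing bound $|\tilde{\ell}^N_i-\tilde{\ell}^N_j|\geq |i-j|$ together with the cancellation of the $\ln N$ term against $N^{-2}\sum_{1\leq k\leq rL_t}\ln k$; your bookkeeping giving $O(r\ln(1/r))$ is right, and since each near-diagonal summand $-\ln|q^{\eta\tilde{\ell}_i}-q^{\eta\tilde{\ell}_j}|$ is nonnegative the bound works in the one direction actually needed. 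The paper instead splits the kernel itself, writing $\ln|e^{-\theta x}-e^{-\theta y}|$ as the bounded continuous function $\ln|(e^{-\theta x}-e^{-\theta y})/(x-y)|$ plus the pure logarithm $\ln(\epsilon(\tilde{\ell}_i-\tilde{\ell}_j))$; the smooth part converges with no cutoff, and the logarithmic part is compared term-by-term with $\sum\ln(y_j-y_{i-1})$ and then with $L_t^2\iint\ln|x-y|\,d\lambda\,d\lambda$, following Lemma 2.16 of \cite{DasDim2022} and using that the density bound forces $y_j-y_i\geq (2L_t)^{-1}\beta\kappa$. Your route is more self-contained but introduces the auxiliary parameter $r$ and a double limit; the paper's route avoids any cutoff by exploiting the explicit quantile structure. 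The only point you wave at is the strict monotonicity of the rounded configuration (quantile gaps are only $\geq \epsilon_N(1+o(1))$, so naive rounding can collide), but the paper's definition $\tilde{\ell}^N_i=\sup\{j\in\N_t:\epsilon j\leq y_{L_t-i+1}\}$ is equally terse on this, so this is not a substantive gap.
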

\begin{proof}
	
We will follow the step 1 of the proof of Lemma 2.16 in \cite{DasDim2022}. Let us look at $F_{\lambda}(x) = \lambda([0, x ])$. For $N \in \N$, for $N \in \N$, $1 \leq i \leq L_\xi$ we denote the following quantiles of $\lambda$

\[ y_i^N = \inf \left\{ t \in [0, + \infty], F_{\lambda}(\xi) = \frac{i - 1/2}{L_\xi} \right\} \]
and also $y_0=\beta(\gamma^2 - \kappa)$ and $y_{L_\xi + 1} = \texttt{M} + \beta \kappa$
define for $1 \leq i \leq L_\xi $,
\begin{equation}  \tilde{\ell}^N_{i }  = \sup  \{ j \in \N_t , \epsilon j  \leq y_{ L_\xi -i +1}. \} \end{equation}
Then we have that $(\tilde{\ell}^N_{i })_{ 1 \leq i \leq L_\xi}$ is a strictly decreasing sequence of integers.

Since the density of $\lambda$ is bounded above by $(\beta \kappa)^{-1}$, we have that $|F_{\lambda}(x) - F_{\lambda}(y)| \leq (\beta \kappa)^{-1} |x -y|$ which implies that $ (L_\xi)^{-1} \sum_{i =1}^{L_\xi} \delta_{ \epsilon_N \tilde{\ell}^N_i}$ converges toward $\lambda$ and so the sequence $\nu_N$ converges toward $\mu$.

Using the same notation as in upper bound lemma, the goal is to prove
\begin{equation}  \limsup_N W_N( \tilde{\ell}^N) \leq I( \mu) = \tilde{I}( \lambda).
 \end{equation} 
First let us compare $W_N^{(1)}( \tilde{\ell}^N)$ with $- \tilde{H}^{(\xi)}$. 
We have 
\begin{equation} W_N^{(1)}( \tilde{\ell}^N) = - \frac{1}{2 N^2} \sum_{\substack{1 \leq i ,  j \leq L_\xi \\ i \neq j }} \left( \ln \left| \frac{e^{- \epsilon\tilde{\ell}^N_i\theta} - e^{- \epsilon\tilde{\ell}_j\theta}}{ \epsilon(\tilde{\ell}^N_i - \tilde{\ell}^N_j)} \right| + \ln \left| \frac{e^{- \epsilon \tilde{\ell}^N_i\eta} - e^{- \epsilon \tilde{\ell}_j\eta}}{ \epsilon(\tilde{\ell}^N_i - \tilde{\ell}^N_j)} \right|\right) -  \frac{1}{ N^2} \sum_{\substack{1 \leq i ,  j \leq L_\xi \\ i \neq j } } \ln ( \epsilon(\tilde{\ell}^N_i - \tilde{\ell}^N_j)). \end{equation}

Let 
\begin{equation}
	Y^{(1)}_N( \tilde{\ell}^N )=  - \frac{1}{2 N^2} \sum_{\substack{1 \leq i ,  j \leq L_\xi \\ i \neq j }} \left( \ln \left| \frac{e^{- \epsilon\tilde{\ell}^N_i\theta} - e^{- \epsilon\tilde{\ell}_j\theta}}{ \epsilon(\tilde{\ell}^N_i - \tilde{\ell}^N_j)} \right| + \ln \left| \frac{e^{- \epsilon \tilde{\ell}^N_i\eta} - e^{- \epsilon \tilde{\ell}_j\eta}}{ \epsilon(\tilde{\ell}^N_i - \tilde{\ell}^N_j)} \right|\right) 
	\end{equation}
and 
\begin{equation}
	Y^{(2)}_N( \tilde{\ell}^N ) = -   \frac{1}{ N^2} \sum_{\substack{1 \leq i ,  j \leq L_\xi \\ i \neq j } } \ln ( \epsilon(\tilde{\ell}^N_i - \tilde{\ell}^N_j)).
	\end{equation}
Since $\displaystyle f_a : (x,y) \mapsto \ln \Big( \Big| \frac{e^{-x a} -e^{-y a}}{ x -y} \Big| \Big)$  is continuous and bounded on $[0,\texttt{M}]^2$ and since that $f_a(x,x) = \ln |a|$, we have that 

\[  \lim_{N \to \infty} \Big(Y^{(1)}_N( \tilde{\ell}^N ) - \frac{\ln(\eta \theta)L_\xi}{2N^2} \Big) = - \frac{\kappa^2}{2} \int\int \ln \Big( \Big| \frac{e^{-x \theta} -e^{-y \theta}}{ x -y} \Big| \Big) + \ln \Big( \Big| \frac{e^{-x \eta} -e^{-y \eta}}{ x -y} \Big| \Big) d\lambda(x) d\lambda(y). \]

To bound $Y_N^{(2)}$ we now follow Step 2 from the proof of Lemma 2.16 from \cite{DasDim2022}. i.e 

\begin{equation}
	\begin{aligned}
		- \frac{N^2}{2} Y_N^{(2)}(\tilde{\ell}^N ) + \sum_{1 \leq i < j \leq L_\xi} \frac{1}{\tilde{\ell}_i^N - \tilde{\ell}_j^N} \geq \sum_{1 \leq i < j \leq L_\xi} \ln( y_{L_\xi -i +1} - y_{L_\xi -j +1}).
		\end{aligned}
	\end{equation}
Furthermore we have that 
\[ \sum_{1 \leq i < j \leq L_\xi} \frac{1}{\tilde{\ell}_i^N - \tilde{\ell}_j^N}  = O(N \ln N) \]
and using equation the proof of (2.38) from \cite{DasDim2022}, we have 
we have 
\begin{eqnarray*} L_\xi^2\int\int_{ x > y} \ln| x - y| d \lambda(x) d \lambda(y) &\leq& \sum_{i=1 }^{L_\xi+1} \sum_{j=i+1}^{L_\xi+1} \ln( y_{j} - y_{i-1}) + \frac{1}{2}\sum_{i=1}^{L_\xi +1} \ln(y_i - y_{i-1}).
\end{eqnarray*}
Using the fact that $\texttt{M}+ \beta \kappa > y_{j } - y_i >(2L_\xi)^{-1} ( \beta \kappa)$ for $j >i$ (the $1/2$ factor is here to take the case $i=0,j=1$ into account), we have that  $ \ln( y_i - y_j )  \leq O( \ln N)$ and therefore  
\begin{eqnarray*} L_\xi^2\int\int_{ x > y} \ln| x - y| d \lambda(x) d \lambda(y) &\leq& \sum_{1 \leq i,j \leq L_\xi \atop i \neq j } \ln( y_{j} - y_{i}) + O(N \ln N).
\end{eqnarray*}
Putting everything together we get that 

\[ Y^{(2)}_N( \tilde{\ell}^N )  \leq  - \kappa^2 \int\int \ln| x - y| d \lambda(x) d \lambda(y) + o(1) \] 
and so, puting the limits of $ Y^{(1)}_N( \tilde{\ell}^N ) $ and $ Y^{(2)}_N( \tilde{\ell}^N ) $ together

\[ W_N^{(1)}(  \tilde{\ell}^N)  \leq - \tilde{H}( \lambda) +o(1). \]

For $W_N^{(2)}$ since the support of $\lambda$ is in $[\beta( \gamma^2 - \kappa), \texttt{M}]$ and $\epsilon \tilde{\ell}_1^N \leq \texttt{M}$, we have that the first summand converges toward $\tilde{M}( \lambda)$. Further in we consider the following Riemann sums 
\[ F_N(x) = \frac{1}{N} \sum_{i=1}^{N(1-\gamma^2-|\xi|)}\ln (1-e^{-\theta x-\beta |\xi|-\beta\theta\frac{j}{N}}). \]
We have that on $[\beta( \gamma^2 - \kappa),\texttt{M}]$, the sequence $F_N$ converges uniformly toward 
\[ F(x) =  \int_{\gamma^2-1}^{|\xi|} \ln( 1 - e^{(-x + \beta u)\theta}) du \]
and so, 
the second summand converges toward $\tilde{K}^{(\xi)}( \lambda)$. So we do have that $ \limsup_N W_N( \tilde{\ell}^N) = \tilde{I}(\lambda)$
Ergo, one has that for any $\delta > 0$ and $N$ large enough:

\begin{eqnarray} \overline{\P}[ d( \widehat{\mu}_N, \mu) \leq \delta]  &\geq& \overline{\P}[ \ell^N = \tilde{\ell}^N ] \\
&\geq & {\exp ( - N^2( W_N(\tilde{\ell}_N)) \Big(a \sqrt{Q \tilde{Q} } \Big)^{ \sum_{1 \leq i \leq L_\xi} \tilde{\ell}^N_i}}  \\
	&\geq& \exp ( - N^2( \tilde{I}(\lambda) + o(1))) \Big(a \sqrt{Q \tilde{Q} } \Big)^{ \sum_{1 \leq i \leq L_\xi} \tilde{\ell}^N_i} \\
	&\geq & \exp ( - N^2( I^(\mu) + o(1))),
\end{eqnarray} 
where we used that since $\epsilon \tilde{\ell}_{i}^N =O(N)$, then 

\[ \Big(a \sqrt{Q \tilde{Q} } \Big)^{ \sum_{1 \leq i \leq L_\xi}\tilde{\ell}^N_i} \geq \exp( O(N)). \]

\end{proof}

Finally, the following proposition concludes the proof of \eqref{eq:sup_inf_lim}, in the case $\beta\neq0$.

\begin{proof}[Proof of Proposition \ref{prop:CompactSupport}]
	In this proof, we will denote $a = \beta( \gamma^2 - \kappa)$. We define the following measure as a compact approximation of $\lambda$ for $\texttt{M} > a+ \beta \kappa = \beta\gamma^2 $
	\begin{equation}
		\lambda_{\texttt{M}}=\frac{1}{\beta\kappa}Leb |_{[a,\texttt{M'}]}+\lambda |_{[\texttt{M'},\texttt{M}]},
	\end{equation}
and $\texttt{M'}$ is such that $(\beta\kappa)^{-1} \texttt{M'}+\lambda([\texttt{M'},\texttt{M}])=1$ (using the fact that $\lambda$ has no atoms and the intermediate value theorem, it is easy to see that such a \texttt{M'} exists). Clearly such a measure converges to $\lambda$ as $\texttt{M}\to\infty$.
	We are left to prove that $\tilde{I}( \lambda_{\texttt{M}})$ converges toward $\tilde{I}( \lambda)$.
	
	Let us analyze separately the three terms composing  $\tilde{I}( \lambda)$. We start with $\tilde{M}(\lambda)$, which is of the form $c\int x d\lambda$.
	If we compare  $\tilde{M}(\lambda)$ and  $\tilde{M}(\lambda_{\texttt{M}})$, their difference gives
	\begin{equation}
		\begin{aligned}
			&\int_0^{\texttt{M'}}x\, d\lambda(x) + \int_{\texttt{M'}}^{+\infty} x\,d\lambda(x)-\int_0^{\texttt{M'}}x\, dx-\int_{\texttt{M'}}^{\texttt{M}}x\, d\lambda(x)\\
			=&\int_0^{\texttt{M'}}x\, d\lambda(x) + \int_{\texttt{M}}^{+\infty} x\,d\lambda(x) -\frac{(\texttt{M'})^2}{2},
		\end{aligned}
	\end{equation}
	which converges to 0 a $\texttt{M}\to\infty$.
	
	For $\tilde{K}(\lambda)$ the result is immediate, since the function $\displaystyle x\mapsto\int \ln(1-e^{-(s+x)})ds$ is continuous and bounded, and $\lambda_{\texttt{M}}\to\lambda$ as $\texttt{M} \to\infty$.
	
	We turn to $\tilde{H}(\lambda)$. We observe that we can decompose $\tilde{H}(\lambda)$ by $\tilde{H}(\lambda_{\texttt{M}})$ in the sum the following integrals:
	\begin{equation}
		\begin{aligned}
			&I_1=-\frac{1}{2} \int_{\texttt{M'}}^{\texttt{M}}\int_{\texttt{M'}}^{\texttt{M}} \ln | e^{ - \theta x} - e^{ - \theta y}| d \lambda(x) d \lambda(y) - \frac{1}{2}  \int_{\texttt{M'}}^{\texttt{M}}\int_{\texttt{M'}}^{\texttt{M}} \ln | e^{ - \eta x} - e^{ - \eta y}| d \lambda(x) \,d \lambda(y),\\
			&I_2=-(\beta\kappa)^{-1}\int_{\texttt{M'}}^{\texttt{M}}\int_a^{\texttt{M'}} \ln | e^{ - \theta x} - e^{ - \theta y}| d \lambda x d \lambda(y) - (\beta\kappa)^{-1}\int_{\texttt{M'}}^{\texttt{M}}\int_a^{\texttt{M'}} \ln | e^{ - \eta x} - e^{ - \eta y}| dx\, d\lambda(y),\\
			&I_3=-\frac{1}{2}(\beta\kappa)^{-1} \int_a^{\texttt{M'}}\int_a^{\texttt{M'}} \ln | e^{ - \theta x} - e^{ - \theta y}| dx\,dy - \frac{1}{2}(\beta\kappa)^{-1} \int_a^{\texttt{M'}}\int_a^{\texttt{M'}} \ln | e^{ - \eta x} - e^{ - \eta y}| dx\,dy.
		\end{aligned}
	\end{equation}
	Obviously, since the integrand is a positive function, $I_1$ converges to $\tilde{H}(\lambda)$ as $\texttt{M}\to\infty$ (and $\texttt{M'}\to a$). For the same reason $I_3$ converges to 0.
	For $I_2$, we perform a further decomposition as
	\begin{equation}
		-(\beta\kappa)^{-1}\int_{\texttt{M'}}^{\texttt{M'}+1}\int_a^{\texttt{M'}} \ln | e^{ - \theta x} - e^{ - \theta y}| d x d \lambda(y) -(\beta\kappa)^{-1}\int_{\texttt{M'}+1}^{\texttt{M}}\int_a^{\texttt{M'}} \ln | e^{ - \theta x} - e^{ - \theta y}| dx d \lambda(y),
	\end{equation}
	plus the same replacing $\theta$ by $\eta$ and $x$ by $y$. Again, since the integrand is bounded, the second term goes to 0, while the first one converges to 0 by the integrability of $\ln|x-y|$ over a compact set and the fact that the density is bounded by $(\beta\kappa)^{-1}$.
\end{proof}

Now we can prove the large deviation principle for the peak of each integer partition $l_1^{(t)}$ in the bulk; we only show it for the case $t=0$, but the proof can be easily adapted.

\begin{proposition}\label{prop:LDPl1} In the case $t=0$ (and therefore $\xi=0$). we have that 
	\begin{equation} \lim_{N \to \infty} \frac{1}{N^2} \ln \P[ \epsilon_N l_1 \leq c] = - F_c \end{equation}
	where $F_c = \min_{ \mu \in \mathcal{P}[e^{ -c},1]\cap \mathfrak{P}} J^{(0)}( \mu) $. 
\end{proposition}

\begin{proof}
First, we observe that his result comes from the fact that 
\[ \{  \epsilon_N l_1 \leq c \} = \{ \widehat{\mu}_N( [ e^{-c},1]) =1 \}.\]
However it is not a direct consequence of the previous LDP since $\{ \mu( [ e^{-c},1]) =1 \}$ isn't an open set of $\mathcal{P}([0,1])$. So a little more work is necessary.  
We express the probability in the statement as
\[ \P[ \epsilon_N l_1 \leq c]
= \frac{\bar{\mathbb{P}}[ \epsilon_N l_1 \leq c ]}{\bar{\mathbb{P}}[ \Omega]} = \frac{\bar{\mathbb{P}}_c[ \Omega]}{\bar{\mathbb{P}}[ \Omega]}
\]
where $\Omega$ denotes the entire space of configuration and $\bar{\mathbb{P}}_c$ is defined as the restriction of  $\bar{\mathbb{P}}$ to configurations in $l \in[0,cN]^N$. We already know that 
\[ \lim_{N \to \infty} \frac{1}{N^2} \log \bar{\P} [ \Omega] = 
 - \inf_{\mu\in \mathfrak{P}} I^{(0)}(\mu) \]
 So we are left with finding $\lim_{N \to \infty}N^{-2} \log \bar{\mathbb{P}}_c[ \Omega]$. 
To do this, we can follow the same steps as for $\bar\P[\Omega]$ and notice that the only difference in the proof will concern the upper bound estimate, where in all the sums of eq. \eqref{eq:UB1}, \eqref{eq:UB2} and \eqref{eq:UB3} instead of considering $l\in\N_t^{L_\xi}$ we take $l\in[0,cN]^N$. In the end we get that 
\[ \lim_{N \to \infty} \frac{1}{N^2} \log \bar{\P} [ \Omega] = 
 - \inf_{\mu\in \mathfrak{P}\cap \mathcal{P}([ e^{ -c},1]} I^{(0)}(\mu) \]
 and from there we get the result.

\end{proof}

\subsection{Generalization to $\beta =0$}

 To conclude the proof of Theorem \ref{thm:mainLDP}, we must consider the case  $\beta =0$ or, equivalently $\lim_{N \to \infty}\epsilon N = 0$. This can be easily achieved with some small adjustments to the previous proof. 
\begin{theorem}\label{thm:LDPlt_beta0}
	Let us assume that $\lim_{N \to \infty} N \epsilon_N =0$ and such that $\ln( \epsilon_N) \gg -N $.
	$\widehat{\mu}_N^{(\xi)}$ satisfy a large deviation principle in $\mathfrak{P} = \mathcal{P}([0,1])$ with speed $N^2$ and good rate function $J^{(\xi)}=I^{(\xi)} - \inf I^{(\xi)}$, where the rate function $I^{(\xi)}$ is defined as 

	\begin{equation}
		I^{(\xi)}( \mu) = -H^{(\xi)}(\mu) - K^{(\xi)}(\mu) - { M^{(\xi)}(\mu)},\end{equation}
	where $\kappa=\kappa(\xi)=L_\xi/N$, and where the definitions of  $K^{(\xi)}(\mu)$ are generalized to $\beta =0$ the following way:

	\begin{itemize}
		\item[(i)] if $\xi\in (-\gamma^2,0]$, $\kappa=\gamma^2-|\xi|$ and
		\begin{equation}K^{(\xi)}(\mu)={\kappa}( 1 - \gamma^2 - \xi)\int\ln(1-x^{\theta})\, d\mu(x);\end{equation}

		\item[(ii)] if $\xi\in(0,1-\gamma^2]$, $\kappa=\gamma^2$ and
		\begin{equation}K^{(\xi)}(\mu)={\kappa}( 1 - \gamma^2 - \xi)\int\ln(1-x^{\theta})\, d\mu(x);\end{equation}
		
		\item[(iii)] if $\xi\in(1-\gamma^2,1]$, $\kappa=1-\xi$ and
		\begin{equation}K^{(\xi)}(\mu)={\kappa}(\xi + \gamma^2 -1) \int\ln(1-x^{\eta})\, d\mu(x).\end{equation}
	\end{itemize}
\end{theorem}

Though the assumption that $ \ln( \epsilon_N) \gg - N $ is technical, its presence will become clear over the course of the proof.  We list here the modification one must make, first regarding the upper bound:

\begin{enumerate}
	\item First, one has to adapt the expression of $W_N^{(2)}(x)$ in equation \eqref{eq:W2N} by replacing $\beta$ with $N \epsilon_N$.
	\item Then, one can still write the equations \eqref{eq:UB2} and \eqref{eq:UB3} but this last equation then becomes

\begin{equation} \label{eq:UB4} \sum_{ l \in \mathbb{N}_t^{L_\xi}} \prod_{1 \leq i \leq {L_\xi}} a^{\ell_i}(Q\tilde Q)^{\frac{\ell_i}{2}} = \Big(\sum_{i=M- L_\xi}^{+\infty}  a^{i}(Q\tilde Q)^{\frac{i}{2}}\Big)^{L_\xi} = \Big( \frac{(a Q \tilde{Q})^{M -L_\xi}}{ 1 - a Q \tilde{Q} }\Big)^{L_\xi} = \exp ( o( N^2))  
 \end{equation} 
which allows us to complete the upper bound. The second point in particular illustrates why we chose to include the technical assumption that $\ln( \epsilon_N) = o(N)$.

\end{enumerate}

 For the lower bound, we still use the same definition for $\lambda$ but we must now be careful that the density of $\lambda$ is not bounded (and a priori may not even exist). Therefore we need to add an approximation step.
 
 That is, we need to approach any $\lambda$ by measures with bounded densities. For this, we will introduce for every measure $\nu \in \mathcal{P}( [0, + \infty[)$ the quantile function of $\nu$ defined for every $t \in [0,1[$ by 
 
 \[ Q_{\nu}(\xi) = \sup \{ x \in \mathbb{R}, \nu([0, x]) \leq t \}. \] 
  With this definition we have for $\nu \in \mathcal{P}( [0, + \infty[)$
 \[ \int_{0}^{+ \infty} f(x) d \nu(x) = \int_{0}^{1} f(Q_{\nu}(\xi)) dt .\]
 For $\zeta >0$ We then define $\nu^{(\zeta)}$ by 

\[ Q_{\nu^{(\zeta)}}(\xi) = \zeta t + Q_{\nu}(\xi) .\]
We can notice that for every $a <b$, $\nu^{(\zeta)}( [a,b]) \leq \frac{b-a}{\zeta}$ and therefore $\nu^{( \zeta)}$  is a measure which is continuous with respect to the Lebesgue measure and whose density is upper bounded by $\frac{1}{\zeta}$. Now, {restricting ourselves to case \textit{(i)} (the other cases are similar)} let us prove that for every $\lambda \in \mathcal{P}([0, + \infty[)$ 

\[ \lim_{\zeta \to 0} I^{(\xi)}( \lambda^{(\zeta)}) =  I^{(\xi)}( \lambda) . \]

For this, we can prove that $\tilde{H}^{(\xi)}( \lambda^{(\zeta)}), \tilde{M}^{(\xi)}( \lambda^{(\zeta)})$ and $\tilde{K}^{(\xi)}( \lambda^{(\zeta)})$ converge toward $\tilde{H}^{(\xi)}( \lambda), \tilde{M}^{(\xi)}( \lambda)$ and $\tilde{K}^{(\xi)}( \lambda)$ when $\eta$ goes to $0$. First, for the function $\tilde{M}^{(\xi)}$. 

\[ \tilde{M}^{(\xi)}( \lambda^{(\zeta)})  = \kappa \eta |\xi| \int_0^1 ( Q_{\lambda}(\xi) + \zeta t) dt =  \tilde{M}^{(\xi)}( \lambda) + \frac{ \eta\kappa \zeta |\xi| }{2} \]
the result is straightforward

For the function $K^{(\xi)}$ we have 
\begin{eqnarray*}
	\tilde{K}^{(\xi)}( \lambda^{(\zeta)}) &=& \kappa \int \ln( 1 - e^{-x\theta}) d  \lambda^{(\zeta)}(x) \\
	 &=& \kappa \int_0^1 \ln( 1 - e^{-(Q_{\lambda}(\xi) + \zeta t ) \theta}) d  t .\\
\end{eqnarray*}
Using that, for $\eta <1$,  $   - \ln( 1 - e^{-(Q_{\lambda}(\xi) + \zeta t) \theta}) ) \geq 0 $, we can use the monotone convergence theorem and deduce that $\tilde{K}^{(\xi)}( \lambda^{(\zeta)}) $ converges toward  $\tilde{K}^{(\xi)}( \lambda) $. 
And last for the function $H$, let us simply look at the term $L( \lambda^{(\zeta)}) = \int \int \ln | e^{- \theta x} - e^{ - \theta y}| d \lambda^{(\zeta)}(x) d \lambda^{(\zeta)}(y) $
\begin{eqnarray*}
	L( \lambda^{(\zeta)})  &=& \int_0^1 \int_0^1 \ln | e^{- \theta (Q_{\nu}(\xi) + \zeta t)} - e^{ - \theta(Q_{\nu}(u) + \zeta u) }| dt du \\
	&=&  2\int_0^1 \int_u^1 \ln ( e^{ - \theta(Q_{\nu}(\xi) +\zeta t)} - e^{ - \theta(Q_{\nu}(u) +\zeta u)} ) dt du \\
	&=& 2 \zeta \int_0^1 \int_u^1 \theta u dtdu +  2\int_0^1 \int_u^1 \ln ( e^{ - \theta(Q_{\nu}(\xi) +\eta (t-u))} - e^{ - \theta Q_{\nu}(u)} ) dt du\\
	&=& \frac{\zeta \theta}{3} +  2\int_0^1 \int_u^1 \ln ( e^{ - \theta(Q_{\nu}(\xi) +\zeta (t-u))} - e^{ - \theta Q_{\nu}(u)} ) dt du.\\
\end{eqnarray*}
We can then apply the monotone convergence theorem to prove that $L( \lambda^{(\zeta)})$ converges toward $L( \lambda)$. The convergence of $\tilde{H}^{(\xi)}( \lambda^{(\zeta)})$ toward $\tilde{H}^{(\xi)}( \lambda)$ follows.

From there, one can apply again the approximation step in Proposition \ref{prop:CompactSupport} to reduce ourselves to the case of a measure with bounded density and compact support. Now, proving the lower bound for a ball centered on a such a given measure follows exactly the same proof. The rest of the proof remains identical. In particular, reminding that $\tilde{\ell}_i^N$ is defined as 
\begin{equation}  \tilde{\ell}^N_{i }  = \sup  \{ j \in \N_t , \epsilon_N j  \leq y_{ L_\xi -i +1}. \} \end{equation}
the upper bound on the density ensures that for $N$ large enough so that $\epsilon_N \leq \zeta^{-1}$, we have $\tilde{\ell}_i^N \neq \tilde{\ell}_j^N$ for $i \neq j$ and that $\epsilon_N \tilde{\ell}_i \geq y_{L_\xi-i}$. Additionally, we still have 
\begin{eqnarray*}
	- \frac{N^2}{2}	Y^{(2)}_N( \tilde{\ell}^N ) + \sum_{ 1 \leq i < j \leq L_\xi} \frac{1}{\tilde{\ell}_j^N - \tilde{\ell}_i^N }  &=&   \sum_{\substack{1 \leq i <  j \leq L_\xi } } \ln ( \epsilon_N (\tilde{\ell}^N_j - \tilde{\ell}^N_i)) + \sum_{ 1 \leq i < j \leq L_\xi} \frac{1}{\tilde{\ell}_j^N - \tilde{\ell}_i^N }  \\
	& \geq & \sum_{\substack{1 \leq i <  j \leq L_\xi } } \ln ( \epsilon_N (\tilde{\ell}^N_j - \tilde{\ell}^N_i)) + \sum_{ 1 \leq i < j \leq L_\xi} \ln ( 1 + \frac{1}{\tilde{\ell}_j^N - \tilde{\ell}_i^N } ) \\
	& \geq &\sum_{\substack{1 \leq i <  j \leq L_\xi } } \ln ( \epsilon_N (\tilde{\ell}^N_j - \tilde{\ell}^N_i +1)) \\
	& \geq & \sum_{1 \leq i < j \leq L_\xi} \ln( y_{L_\xi -i +1} - y_{L_\xi -j +1}).
	\end{eqnarray*}
We then have 

\begin{eqnarray*} (L_\xi)^2 \int \int_{ x_1 > x_2} \ln(x_1 -x_2 ) d \lambda^{(\zeta)}(x_1)d \lambda^{(\zeta)}(x_2) &\leq& \sum_{i=1}^{ L_\xi +1}\sum_{j=i+1}^{ L_\xi +1} \ln( y_{j} - y_{i-1} )  +  \frac{1}{2}\sum_{i=1}^{ L_\xi +1}\ln( y_{i} - y_{i-1}) \\
& \leq & \sum_{1 \leq i < j \leq L_\xi} \ln( y_{i} - y_j) + O(N \ln N ).
 \end{eqnarray*}
Indeed we have $\texttt{M} + \beta \kappa \geq y_{i} - y_{j} \geq \frac{1}{\eta N}$. 
Regarding the $W_N^{(2)}$, since 

\[ \frac{1}{N} \sum_{i=1}^{N(1-\gamma^2-|\xi|)}\ln (1-e^{-\theta x-\beta |\xi|-\beta\theta\frac{j}{N}}) \geq ( 1-\gamma^2-|\xi| )\ln (1-e^{-\theta x}) \]
we can write

\begin{equation}
	W_N^{(2)}(\tilde{\ell}) \geq  \frac{\eta |\xi|}{N} \sum_{i=1}^{L_\xi}\epsilon_N \tilde{\ell}_i^N - \frac{ (1 - \gamma^2 - |\xi|)}{N} \sum_{i=1}^{L_\xi}\ln (1-e^{ - \epsilon_N \tilde{\ell}_i^N \theta} ).
\end{equation} 
From there it is easy to see that when $N$ goes to $\infty$, the right hand-side goes to $\tilde{K}^{(\xi)}(\lambda) + \tilde{M}^{(\xi)}( \lambda)$.
That concludes the proof.

\section{The equilibrium measure: subcritical and supercritical regimes}
\label{sec:rhp}
In the previous sections, we obtain a large deviation principle for the plane partition, i.e. we characterize the large deviation of each interlacing partition $l_i^{(\xi)}$. Specifically, we showed that the equilibrium measure (the asymptotic shape of the partition $l_i^{(\xi)}$) satisfies a large deviation principle with speed $N^2$ and rate function $I^{(\xi)}(\mu)$, see Theorem \ref{thm:mainLDP}. Furthermore, in Proposition \ref{prop:LDPl1}, we derive a large deviation principle for the length of the peak of the partition in terms of the same rate function $I^{(\xi)}(\mu)$. 
In this section, our goal is to prove Theorem \ref{thm:intro_main_shapes}, i.e. we want to obtain an explicit expression for the equilibrium measure of the functional $I^{(\xi)}(\mu)$ \eqref{eq:functional}.
We tackle these situations by rephrasing the minimization problem as a Riemann--Hilbert Problem (RHP) and solving it explicitly.
As we mentioned in the Introduction, we first reduce the non-standard rate function $I^{(\xi)}(\mu)$ \eqref{eq:functional} to the classical Muttalib--Borodin logarithmic energy functional. This is the content of the next proposition.

 \begin{proposition}
 \label{prop:bridge}
     In the same hypotheses as Theorem \ref{thm:mainLDP}, assume that  $\mu(dx) \equiv \mu(x)dx$, and define $\omega_\eta(x)=\frac{1}{\eta} x^{\frac{1}{\eta}-1}\mu(x^{\frac{1}{\eta}})\,,\omega_\theta(x)=\frac{1}{\theta} x^{\frac{1}{\theta}-1}\mu(x^{\frac{1}{\theta}})$. Then $\omega_\eta(x)\in\mathcal{P}^{\eta\beta\kappa}([0,e^{-\beta\eta(\gamma^2-\kappa)}])$ and $\omega_\theta(x)\in\mathcal{P}^{\theta\beta\kappa}([0,e^{-\beta\theta(\gamma^2-\kappa)}])$ are the unique minimizers of the functionals $I_\eta( \omega),I_\theta( \omega)$ respectively; here
     
     \begin{align}
	\label{eq:functional_eta}
	 &I_\eta( \omega) = -H_\eta(\omega) - K_\eta(\omega) - M_\eta(\omega),\\
     & I_\theta( \omega) = -H_\theta(\omega) - K_\theta(\omega) - M_\theta(\omega), \label{eq:functional_theta}
     \end{align}
     here $\kappa=\kappa(\xi)=L_\xi/N$, $H_\eta(\omega),\, K_\eta(\omega),\, M_\eta(\omega),\,H_\theta(\omega),\, K_\theta(\omega)$ and $M_\theta(\omega)$ have the following forms:
\begin{equation}
	H_\eta(\omega)=\frac{1}{2} \int \int\left(\ln (| x^{\frac{\theta}{\eta}} - y^{\frac{\theta}{\eta}}|) + \ln (| x - y| )\right) \omega(dx) \omega(dy)\,, \quad H_\theta(\omega)=\frac{1}{2} \int \int\left(\ln (| x^{\frac{\eta}{\theta}} - y^{\frac{\eta}{\theta}}|) + \ln (| x - y|) \right) \omega(dx) \omega(dy)\, ;
\end{equation}
     \begin{enumerate}[label=\roman*.]
         \item if $\xi\in (-\gamma^2,0]$, $\kappa=\gamma^2-|\xi|$  and
	\begin{equation}K_\eta(\omega)=\frac{1}{\kappa}\int\int_{\gamma^2-1}^{|\xi|}\ln(1-x^{\frac{\theta}{\eta}}e^{{ \beta}\theta u})du\, \omega(dx)\,, \quad K_\theta(\omega)=\frac{1}{\kappa}\int\int_{\gamma^2-1}^{|\xi|}\ln(1-xe^{{ \beta}\theta u})du\, \omega(dx)\,,
    \end{equation}
	\begin{equation}
	M_\eta(\omega)= \frac{|\xi|}{\kappa} \int  \ln(x)\, \omega(dx)\,, \quad M_\theta(\omega)= \frac{|\xi|\eta}{\kappa\theta} \int  \ln(x)\, \omega(dx)\,.
	\end{equation}
	
	\item if $\xi\in(0,1-\gamma^2]$, $\kappa=\gamma^2$ and
	\begin{equation}K_\eta(\omega)=\frac{1}{\kappa}\int\int_{0}^{1-\gamma^2-\xi}\ln(1-x^{\frac{\theta}{\eta}}e^{-{ \beta}\theta u})du\,\omega(dx)\,, \quad K_\theta(\omega)=\frac{1}{\kappa}\int\int_{0}^{1-\gamma^2-\xi}\ln(1-xe^{-{ \beta}\theta u})du\,\omega(dx) \,,\end{equation}
    
	\begin{equation}M_\eta(\omega)= \frac{\theta\xi }{\eta \kappa} \int  \ln(x)\, \omega(dx)\,,\quad M_\theta(\omega)= \frac{\xi}{ \kappa} \int  \ln(x)\, \omega(dx)\,.
    \end{equation}
	
	\item if $\xi\in(1-\gamma^2,1]$, $\kappa=1-\xi$ and
    
	\begin{equation}
    K_\eta(\omega)=\frac{1}{\kappa}\int\int_{{1-\gamma^2-\xi}}^{0}\ln(1-xe^{-\beta\eta u})du\, \omega(dx)\,,\quad K_\theta(\omega)=\frac{1}{\kappa}\int\int_{{1-\gamma^2-\xi}}^{0}\ln(1-x^{\frac{\eta}{\theta}}e^{-\beta\eta u})du\, \omega(dx)\,,
    \end{equation}
    
	\begin{equation}
    M_\eta(\omega)= \frac{\theta \xi }{\eta \kappa} \int  \ln(x)\,\omega(dx)\,, \quad M_\theta(\omega)= \frac{\xi }{\kappa} \int  \ln(x)\,\omega(dx)\,.\end{equation}
     \end{enumerate}
 \end{proposition}

 Therefore, if we can obtain an explicit expression for  $\omega_\eta(x),\omega_\theta(x)$, we would get one for $\mu(dx)$. So, as previously mentioned, we are naturally led to consider the following two model problems, which are analogous to the Model Problem \ref{mod_prob_1_intro}-\ref{mod_prob_2_intro}.
 
\begin{problem}
\label{mod_prob_1}
    Let $\nu >1$, consider the functional $\mathcal{I}_\nu[\omega_{\nu}]$ defined as

    \begin{equation}
    	\label{eq:simplified_functional_1}
       \mathcal{I}_\nu[\omega_{\nu}] =  - \frac{1}{2} \int_0^1 \int_0^1\left(\ln (| x^{\nu} - y^{\nu}|) + \ln( | x - y|)  \right)\omega_{\nu}(dx) \omega_{\nu}(dy)\,  - \frac{1}{\kappa}\int_0^1\int_{n_1}^{n_2} \ln(1 - x^\nu e^{-\beta \alpha u}) d u \omega_{\nu}(dx)- m_1 \int_0^1 \ln(x) \omega_{\nu}(dx) 
    \end{equation}
where $\alpha\,,\rho>0,m_2\geq 0, n_2\geq n_1$ and assume $n_1\alpha = -\rho(\gamma^2-\kappa)$, find $\omega_{\nu}(dx)\in \mathcal{P}^{\beta\rho\kappa}([0,e^{-\rho\beta(\gamma^2-\kappa)}])$, such that it minimize the previous functional. 
\end{problem}

\begin{problem}
\label{mod_prob_2}
    Let $\nu >1$, consider the functional $\mathcal{I}_1[\omega_1]$ defined as

    \begin{equation}
    	\label{eq:simplified_functional_2}
       \mathcal{I}_1[\omega_1] =  - \frac{1}{2} \int_0^1 \int_0^1\left(\ln (| x^{\nu} - y^{\nu}|) + \ln( | x - y|)  \right)\omega_1(dx) \omega_1(dy)\,  - \frac{1}{\kappa}\int_0^1\int_{n_1}^{n_2} \ln(1 - x e^{-\beta \alpha u}) d u \omega_1(dx)- m_1 \int_0^1 \ln(x) \omega_1(dx) 
    \end{equation}
where $\alpha\,,\rho>0,m_1\geq 0, n_2\geq n_1$ and assume $n_1\alpha = -\rho(\gamma^2-\kappa)$, find $\omega_1(dx)\in \mathcal{P}^{\beta\rho\kappa}([0,e^{-\rho\beta(\gamma^2-\kappa)}])$, such that it minimize the previous functional. 
\end{problem}

\begin{remark}
      The only difference between the previous two problems is that the potential is slightly different, indeed in the first case we have $\ln(1 - x^\nu e^{-\beta \alpha u})$ and in the second one $\ln(1 - x e^{-\beta \alpha u})$. Despite the difference is minimal, we need to define two different model problems for technical reasons.
\end{remark}

\begin{remark}
    We notice that one could also try to solve the model problem \ref{mod_prob_1} in the case $0<\nu<1$ to obtain the equilibrium measure $\omega_\nu(dx)$ for the all parameter values using Riemann surfaces - see for instance \cite{Charlier2022}. However, we decided to focus on two different model problems since the super critical regime becomes harder to track if $\nu<1$.
\end{remark}

    Given Theorem \ref{thm:mainLDP}, Proposition \ref{prop:bridge} and the model problems \ref{mod_prob_1}-\ref{mod_prob_2}, one can immediately deduce Corollary \ref{cor:intro_identification}.
    Therefore, to explicitly compute $\mu(x)dx$, we must solve the two model problems \ref{mod_prob_1}-\ref{mod_prob_2}.  
The advantage of these model problems is that they can be analyzed explicitly.
Following the same notation as in \cite{DiscreteOPbook}, we define three different types of intervals

\begin{definition}
For any sub-interval  $\frak{J}\subseteq(0,1)$ we say that it is a
    \begin{enumerate}
        \item[] \textbf{Void} if the lower constraint $f_1(x)\equiv0$ is active meaning that $\omega(dx)\equiv 0 $ for $x\in \frak{J}$
        \item[] \textbf{Saturated region} if the upper constraint $f_2(x)=(x \beta\delta)^{-1}$ is active, meaning that the equilibrium measure $\omega(dx)=f_2(x)dx$  for $x\in \frak{J}$
           \item[ ] \textbf{Band} if neither the upper constraint $f_2(x)$ or the lower constraint $f_1(x)$ are active for $x\in \frak{J}$. 
     \end{enumerate}
\end{definition}

The minimization problem we are considering is of the same kind of the one in \cite{Claeys2014} -- see also \cite{Wang2022}, where the author considered the same situation with $\nu\in\mathbb{N}$ -- thus we try to apply the same ideas, but with some important variations. Indeed, as we previously mentioned in the introduction, the minimization problems \ref{mod_prob_1}-\ref{mod_prob_2} differs for the one in the literature of Muttalib--Borodin ensembles for the presence of the upper constraint $(\beta \rho\kappa x)^{-1}$. Similar problems were considered in the classical logarithm energy case for some family of $q$-orthogonal polynomials \cite{byun2024qdeformedgaussianunitaryensemble,byun2025spectralanalysisqdeformedunitary,byun2026qdeformation}. So, in analogy to those works, we expect two different regimes. The first one, that we call \textit{subcritical} regime, corresponds to the case where this upper constraint is not active, meaning that the equilibrium measure does not have any saturated region. The second one, which we call \textit{supercritical} regime, corresponds to the case where there are some saturated regions.

We proceed as follows. First, we analyze the \textit{subcritical} regime, proving the first part of Theorem \ref{thm:intro_main_shapes}. Then, we focus on the \textit{supercritical} regime, proving the second part of the theorem.

    \begin{figure}
		\centering

\tikzset{every picture/.style={line width=0.75pt}} 

\begin{tikzpicture}[x=0.75pt,y=0.75pt,yscale=-1,xscale=1]

\draw    (131.33,160.92) -- (196.67,161.33) ;
\draw [shift={(196.67,161.33)}, rotate = 0.37] [color={rgb, 255:red, 0; green, 0; blue, 0 }  ][fill={rgb, 255:red, 0; green, 0; blue, 0 }  ][line width=0.75]      (0, 0) circle [x radius= 3.35, y radius= 3.35]   ;
\draw [shift={(131.33,160.92)}, rotate = 0.37] [color={rgb, 255:red, 0; green, 0; blue, 0 }  ][fill={rgb, 255:red, 0; green, 0; blue, 0 }  ][line width=0.75]      (0, 0) circle [x radius= 3.35, y radius= 3.35]   ;
\draw    (96,162.25) .. controls (97.33,37.33) and (216.67,82) .. (223,163.25) ;
\draw    (96,162.25) .. controls (98.67,258) and (213.33,301.33) .. (223,163.25) ;
\draw [shift={(223,163.25)}, rotate = 274] [color={rgb, 255:red, 0; green, 0; blue, 0 }  ][fill={rgb, 255:red, 0; green, 0; blue, 0 }  ][line width=0.75]      (0, 0) circle [x radius= 3.35, y radius= 3.35]   ;
\draw [shift={(96,162.25)}, rotate = 88.4] [color={rgb, 255:red, 0; green, 0; blue, 0 }  ][fill={rgb, 255:red, 0; green, 0; blue, 0 }  ][line width=0.75]      (0, 0) circle [x radius= 3.35, y radius= 3.35]   ;
\draw  [dash pattern={on 4.5pt off 4.5pt}]  (198,144.08) .. controls (237.4,114.53) and (471.17,122.42) .. (505.89,118.84) ;
\draw [shift={(507.33,118.67)}, rotate = 172.15] [color={rgb, 255:red, 0; green, 0; blue, 0 }  ][line width=0.75]    (10.93,-3.29) .. controls (6.95,-1.4) and (3.31,-0.3) .. (0,0) .. controls (3.31,0.3) and (6.95,1.4) .. (10.93,3.29)   ;
\draw    (573.17,108.83) -- (656.17,109.58) ;
\draw [shift={(656.17,109.58)}, rotate = 0.52] [color={rgb, 255:red, 0; green, 0; blue, 0 }  ][fill={rgb, 255:red, 0; green, 0; blue, 0 }  ][line width=0.75]      (0, 0) circle [x radius= 3.35, y radius= 3.35]   ;
\draw [shift={(573.17,108.83)}, rotate = 0.52] [color={rgb, 255:red, 0; green, 0; blue, 0 }  ][fill={rgb, 255:red, 0; green, 0; blue, 0 }  ][line width=0.75]      (0, 0) circle [x radius= 3.35, y radius= 3.35]   ;
\draw    (530.67,108) -- (568,20.67) ;
\draw [shift={(530.67,108)}, rotate = 293.15] [color={rgb, 255:red, 0; green, 0; blue, 0 }  ][fill={rgb, 255:red, 0; green, 0; blue, 0 }  ][line width=0.75]      (0, 0) circle [x radius= 3.35, y radius= 3.35]   ;
\draw    (530.67,108) -- (578,192) ;
\draw  [dash pattern={on 4.5pt off 4.5pt}]  (242.5,200.75) .. controls (287.05,208.18) and (471.26,228.34) .. (532.68,225.35) ;
\draw [shift={(534.5,225.25)}, rotate = 176.63] [color={rgb, 255:red, 0; green, 0; blue, 0 }  ][line width=0.75]    (10.93,-3.29) .. controls (6.95,-1.4) and (3.31,-0.3) .. (0,0) .. controls (3.31,0.3) and (6.95,1.4) .. (10.93,3.29)   ;
\draw    (568.5,232) -- (651.5,232.75) ;
\draw [shift={(651.5,232.75)}, rotate = 0.52] [color={rgb, 255:red, 0; green, 0; blue, 0 }  ][fill={rgb, 255:red, 0; green, 0; blue, 0 }  ][line width=0.75]      (0, 0) circle [x radius= 3.35, y radius= 3.35]   ;
\draw [shift={(568.5,232)}, rotate = 0.52] [color={rgb, 255:red, 0; green, 0; blue, 0 }  ][fill={rgb, 255:red, 0; green, 0; blue, 0 }  ][line width=0.75]      (0, 0) circle [x radius= 3.35, y radius= 3.35]   ;

\draw (202.67,166.73) node [anchor=north west][inner sep=0.75pt]    {$0$};
\draw (133.33,166.32) node [anchor=north east] [inner sep=0.75pt]    {$-1$};
\draw (225,166.65) node [anchor=north west][inner sep=0.75pt]    {$s_{b}$};
\draw (311,250.4) node [anchor=north west][inner sep=0.75pt]    {$J_{c_{0} ,c_{1}} \ :\ \mathbb{C} \setminus \overline{D} \ \rightarrow \mathbb{C} \setminus [ a,b]$};
\draw (296.33,61.9) node [anchor=north west][inner sep=0.75pt]    {$J_{c_{0} ,c_{1}} \ :\ D\setminus [ -1,0] \ \rightarrow H_{\nu } \setminus [ a,b]$};
\draw (119.67,115.23) node [anchor=north west][inner sep=0.75pt]    {$D$};
\draw (528.67,111.4) node [anchor=north east] [inner sep=0.75pt]    {$0$};
\draw (566.5,235.4) node [anchor=north east] [inner sep=0.75pt]    {$a$};
\draw (101.83,79.9) node [anchor=north west][inner sep=0.75pt]    {$\sigma _{+}$};
\draw (99.83,232.23) node [anchor=north west][inner sep=0.75pt]    {$\sigma _{-}$};
\draw (153.84,78.83) node [anchor=south] [inner sep=0.75pt]   [align=left] {(1)};
\draw (154.33,256.17) node [anchor=north west][inner sep=0.75pt]   [align=left] {(4)};
\draw (151.84,88) node [anchor=north] [inner sep=0.75pt]   [align=left] {(2)};
\draw (156.83,228.17) node [anchor=north west][inner sep=0.75pt]   [align=left] {(3)};
\draw (164,164.13) node [anchor=north] [inner sep=0.75pt]   [align=left] {(6)};
\draw (163.87,158.13) node [anchor=south] [inner sep=0.75pt]  [xslant=-0.04] [align=left] {(5)};
\draw (610,229.38) node [anchor=south] [inner sep=0.75pt]   [align=left] {(1)};
\draw (610,235.38) node [anchor=north] [inner sep=0.75pt]   [align=left] {(4)};
\draw (614.67,112.21) node [anchor=north] [inner sep=0.75pt]   [align=left] {(2)};
\draw (614.67,106.21) node [anchor=south] [inner sep=0.75pt]   [align=left] {(3)};
\draw (556.5,54) node [anchor=north west][inner sep=0.75pt]   [align=left] {(6)};
\draw (565.63,150) node [anchor=north west][inner sep=0.75pt]  [xslant=-0.04] [align=left] {(5)};
\draw (94,165.65) node [anchor=north east] [inner sep=0.75pt]    {$s_{a}$};
\draw (653.5,236.15) node [anchor=north west][inner sep=0.75pt]    {$b$};
\draw (562,111.07) node [anchor=north west][inner sep=0.75pt]    {$a$};
\draw (658.17,112.98) node [anchor=north west][inner sep=0.75pt]    {$b$};

\end{tikzpicture}
\caption{The transformation $J_{c_0,c_1}(s)$ mapping $D$ to $H_\nu\setminus[a,b]$ and $\C\setminus D$ to $\C\setminus[a,b]$. We highlight where the edges are mapped}
		\label{fig:RHP}
	\end{figure}

    \subsection{Sub-critical regime}
    \label{subsec:sub_crit}
    As we mentioned, in this regime we assume that the upper constraint is globally not active; therefore, we can use the same strategy as in \cite{Claeys2014}. The main difficulties is to identify the values $\beta_c$ such that the upper constraint becomes active. The two main results of this section are the following

    \begin{theorem}
        \label{thm:sub_crit_1}
      In the same notation as in Corollary \ref{cor:intro_identification}; consider the model problem \ref{mod_prob_1}, let $I^{\pm}$  be the inverse of $J_{c_0,c_1}(s)$ on $\sigma_\pm$ respectively. Defining

      \begin{equation}
        \label{eq:s_val_sub}
            s_1 = A\left(\frac{B-1}{A-B}\right)\,,\; s_2 = \frac{B-1}{A-B}\,,\;
            A  = \exp\left[ \frac{\alpha\beta\kappa}{\nu} \left( \nu + 1 + m_1 + \frac{\nu}{\kappa}(n_2-n_1) \right) \right],\;
            B = \exp\left[ \frac{\alpha\beta\kappa}{\nu} (1 + m_1) \right]
      \end{equation}

      and setting 

      \begin{equation}
        \label{eq:c_val_sub}
        c_1 = \frac{K_1 - K_2}{s_1 - s_2}\,,\; c_0 = \frac{K_2 s_1 - K_1 s_2}{s_1 - s_2}\,,\;K_1 = \left[ e^{n_1\alpha\beta} \left(\frac{A(B-1)}{B(A-1)} \right) \right]^  {\frac{1}{\nu}}\,,\;K_2 = \left[ e^{n_2\alpha\beta} \left( \frac{B-1}{A-1} \right) \right]^{\frac{1}{\nu}}\,,
    \end{equation}
    if 
    \begin{equation}
        \label{eq:b_sub}
        s_1 \leq s_b = -\frac{\nu-1}{2\nu} + \frac{1}{2\nu c_1}\sqrt{4c_0c_1\nu + c_1^2(\nu-1)^2} \,,
    \end{equation}
    then the equilibrium measure $\omega_\nu(\di x)\equiv  \omega_\nu(x)\di x$ has the following density

    \begin{equation}
        \omega_\nu(x) = \frac{1}{\pi \beta \rho\kappa x}\Arg\left(\frac{s_1-I_+(x)}{s_2-I_+(x)}\right)\mathds{1}_{x \in (a, b)}\,,
    \end{equation}
    where $a=J_{c_0,c_1}(s_a), b=J_{c_0,c_1}(s_b)$.

    \end{theorem}

    \begin{theorem}
        \label{thm:sub_crit_2}
      In the same notation as in Corollary \ref{cor:intro_identification}; consider the model problem \ref{mod_prob_2}, let $I^{\pm}$  be the inverse of $J_{c_0,c_1}(s)$ on $\sigma_\pm$ respectively. Defining

      \begin{equation}
        \label{eq:s_val_sub_2}
            s_1 = A\left(\frac{B-1}{A-B}\right)\,,\; s_2 = \frac{B-1}{A-B}\,,\;
            A  = \exp\left[ \alpha\beta\kappa \left( \nu + 1 + m_1 + \frac{1}{\kappa}(n_2-n_1) \right) \right],\;
            B = \exp\left[ \alpha\beta\kappa(1 + m_1) \right]
      \end{equation}

      and setting 

      \begin{equation}
        \label{eq:c_val_sub_2}
        c_1 = \frac{K_1 - K_2}{s_1 - s_2}\,,\; c_0 = \frac{K_2 s_1 - K_1 s_2}{s_1 - s_2}\,,\;K_1 =  e^{n_2 \alpha\beta} \left(  \frac{A(B-1)}{B(A-1)} \right)^  {\frac{1}{\nu}}\,,\;K_2 = e^{n_1\alpha\beta} \left( \frac{B-1}{A-1} \right) ^{\frac{1}{\nu}}\,,
    \end{equation}
    if 
    \begin{equation}
        s_1 \leq s_b = -\frac{\nu-1}{2\nu} + \frac{1}{2\nu c_1}\sqrt{4c_0c_1\nu + c_1^2(\nu-1)^2} \,,
    \end{equation}
    then the equilibrium measure $\omega_1(\di x)\equiv  \omega_1(x)\di x$ has the following density

    \begin{equation}
        \omega_1(x) = \frac{1}{\pi \beta \kappa\rho x}\Arg\left(\frac{s_1-I_+(x)}{s_2-I_+(x)}\right)\mathds{1}_{x \in (a, b)}\,,
    \end{equation}
    where $a=J_{c_0,c_1}(s_a), b=J_{c_0,c_1}(s_b)$.
    \end{theorem}

    Since the strategy of proofs are similar, we prove just Theorem \ref{thm:sub_crit_1}, and we point out the differences with the proof of Theorem \ref{thm:sub_crit_2}.
    
    \subsubsection{Proof of Theorem \ref{thm:sub_crit_1}}

    To simplify the notation, we drop the index $\nu$ of $\omega_\nu$ for this section. We now proceed by translating the model problem \ref{mod_prob_1} into a Riemann--Hilbert Problem (RHP). Proceeding as in the classical logarithmic potential case \cite{Saff2024,Bloom2016} the minimizer of the functional is characterized by the Euler-Lagrange equations:

	\begin{align}
		\int_0^1 \ln(\vert x^\nu -y^\nu\vert )\omega(\di y) +  \int_0^1\ln(\vert x -y\vert )\omega(\di y) + V(x) = \ell  & \quad x \in \frak{I}_0= (a,b)\\
  \label{eq:boring_bound}
		 \int_0^1 \ln(\vert x^\nu -y^\nu\vert )\omega(\di y) + \int_0^1\ln(\vert x -y\vert )\omega(\di y) + V(x) \leq \ell  & \quad x\not\in \frak{I}_0
	\end{align}
	for some $\ell \in \R$, we notice the change of sign in the derivative. Here $V(z)$ is defined as 
    \begin{equation}
        V(z) = \frac{1}{\kappa}\int_{n_1}^{n_2} \log(1-z^\nu e^{-\beta\alpha u})du + m_1\log(z)\,.
    \end{equation}

    \begin{remark}
        We notice that the potential $V(x)$ is concave in the interval $(0,1)$, thus, by following a standard argument as in \cite{Claeys2014}, -- see also \cite{Kuijlaars2000} for a more general treatment -- we can deduce that, if there are no saturated regions, the support of the equilibrium measure is a single band. 
    \end{remark}
	
	Define $g_\zeta(z) = \int_a^b\ln(z^\zeta - y^\zeta)\omega(\di y)$, then, following the standard notation for singular integrals \cite{Gakhovbook}, we deduce that for $x\in(a,b)$
	\begin{equation}
    \begin{split}
		&g_\zeta^+(x) =\lim_{\varepsilon\to0^+}g_{\zeta}(x+i\varepsilon)= \int_a^b\ln(\vert x^\zeta - y^\zeta\vert)\omega(\di y) +i\pi \int_x^b  \omega(\di y) \,,\\& g_\zeta^-(x) =\lim_{\varepsilon\to0^+}g_{\zeta}(x-i\varepsilon)  =\int_a^b\ln(\vert x^\zeta - y^\zeta\vert)\omega(\di y) - i\pi \int_x^b  \omega(\di y)\,.
    \end{split}
	\end{equation}
    We notice that the function $g_\zeta(z)$, for $\zeta>1$, is not well-defined in all $\C\setminus[a,b]$, but only in  $\mathbb{H}_\zeta\setminus[a,b]$, see Lemma \ref{lem:J_easy}.
	Using this notation and the previous equations, we deduce the following Euler-Lagrange equations for the equilibrium measure $\omega(dx)$ \cite{Saff2024}:
   {\mathtoolsset{showonlyrefs=false} 
    \begin{subequations} \label{eq:rhp_g_1}
        \begin{align}
			&g_\nu^+(x) +  g_1^-(x) + V(x)=\ell  &x \in \frak{I}_0\,,\\
			&g_1^+(x) - g_1^-(x) = g_\nu^+(x) - g_\nu^-(x)  = 2\pi i \int_x^b\omega(\di y)  &x \in \frak{I}_0\,.
	    \end{align}
    \end{subequations} }
	In particular, the functions $g_\nu(z),g_1(z)$ satisfy the following RHP
	\vspace{5pt}

	\begin{rhp}
		\label{rhp:g_small}
		\textbf{for $(g_\nu(z),g_1(z))$}
		
			\begin{itemize}
			\item[a.] $(g_\nu(z),g_1(z))$ are analytic in $(\mathbb{H}_\nu\setminus[a,b],\mathbb{C}\setminus[a,b])$
			\item[b.] $g_\nu(e^{-i\frac{\pi}{\nu}}x) = g_\nu(e^{i\frac{\pi}{\nu}}x) - 2\pi i$ for $x>0$ and $g_1^+(x) = g_1^-(x) + 2\pi i $ for $x<0$
			\item[c.] $g_\nu^+(x) +  g_1^-(x) = g_1^+(x) +  g_\nu^-(x)= -V(x) - \ell$ for $x\in(a,b)$
            \item[d.] $g_1(z) =  \ln(z) + O(z^{-1})$ as $z\to\infty$ in $\mathbb{C}\setminus[a,b]$
			\item[e.] $g_\nu(z) = \nu \ln(z) + O(z^{-\nu})$ as $z\to\infty$ in $\mathbb{H}_\nu\setminus[a,b]$
		\end{itemize}
	\end{rhp}
    In Figure \ref{fig:RHP} we sketch the contour for the functions $g_\nu(z),g_1(z)$ in RHP \ref{rhp:g_small}, we also highlight the transformation $J_{c_0,c_1}(s)$ that we use to solve this RHP.
	Consider the derivative of the previous function $G_\nu(z) = g_\nu'(z)$, $G_1(z)=g_1'(z)$, then from RHP \ref{rhp:g_small} and \eqref{eq:rhp_g_1} we deduce that $(G_\nu(z),G_1(z))$ solve the following RHP

\begin{rhp}
\label{rhp:g_big_1}
		\textbf{ for $(G_\nu(z),G_1(z))$}

	\begin{itemize}
	\item[a.] $(G_\nu(z),G_1(z))$ are analytic in $(\mathbb{H}_\nu\setminus[a,b],\mathbb{C}\setminus[a,b])$
	\item[b.] $G_\nu(e^{-i\frac{\pi}{\nu}}x) = e^{2\frac{\pi i}{\nu}}G_\nu(e^{i\frac{\pi}{\nu}}x)$ for $x\in\mathbb{R}_+$
	\item[c.] $G_\nu^+(x) +  G_1^-(x) = G_1^+(x) +  G_\nu^-(x)= -V'(x)$ for $x\in(a,b)$
        \item[d.] $G_1(z) = \frac{1}{z} + O(z^{-2})$ as $z\to\infty$ in $\mathbb{C}\setminus[a,b]$
	\item[e.] $G_\nu(z) = \frac{\nu}{z} + O(z^{-\nu-1})$ as $z\to\infty$ in $\mathbb{H}_\nu\setminus[a,b]$
	\item[f.] $G_1^+(x) -G_1^-(x)= G_\nu^+(x) - G_\nu^-(x)= -2 \pi i \omega(x)  $
\end{itemize}
\end{rhp}

	Consider now the following transformation
	
	\begin{equation}
		M(s) = \begin{cases}
			G_1(J_{c_0,c_1}(s)) \quad \textrm{outside } \sigma \\
			G_\nu(J_{c_0,c_1}(s)) \quad \textrm{inside } \sigma 
		\end{cases}\,.
	\end{equation}
	Therefore, $M(s)$ solves the following RHP
	
\begin{rhp}
	\textbf{for $M(s)$}

\begin{itemize}
	\item[a.] $M(s)$ are analytic in $\mathbb{C}\setminus\{\sigma \cup [-1,0]\}$
	\item[b.] $M^+(x) = e^{2\frac{\pi i}{\nu}}M^-(x)$ for $x\in (-1,0)$
	\item[c.] $M^+(s) + M^-(s) = -V'(J_{c_0,c_1}(s))$ for $s\in \sigma\setminus\{s_a,s_b\}$
	\item[d.] $\lim\limits_{s\to 0} M(s) = \frac{\nu}{J_{c_0,c_1}(s)}\left(  1 + o(1)\right)$
	\item[e.] $\lim\limits_{s\to \infty} M(s) = \frac{1}{J_{c_0,c_1}(s)}\left( 1 + o(1)\right)$
\end{itemize}
\end{rhp}
	Then, we can consider one last dressing transformation $N(s) = J_{c_0,c_1}(s)M(s)$; $N(s)$ solves the following RHP
	
\begin{rhp}
	\label{rhp:final_measure}
	\textbf{for $N(s)$}

\begin{itemize}
	\item[a.] $N(s)$ are analytic in $\mathbb{C}\setminus\sigma$
	\item[b.] $N^+(s) + N^-(s) = -J_{c_0,c_1}(s)V'(J_{c_0,c_1}(s)) = U(s)$ for $s\in \sigma\setminus\{s_a,s_b\}$
	\item[c.] $N(0) = \nu$, $N(-1)=0$
	\item[d.] $\lim\limits_{s\to \infty} N(s) = 1$
\end{itemize}
\end{rhp}
After some algebraic manipulations, the function $U(s)$ becomes

\begin{equation}
    U(s)= -m_1 + \frac{\nu}{\kappa \alpha\beta}\log\left(\frac{1- e^{-n_2\alpha\beta}J^\nu_{c_0,c_1}(s)}{1- e^{-n_1\alpha\beta}J^\nu_{c_0,c_1}(s)}\right)\,.
\end{equation} 

To solve the previous RHP explicitly we need the following proposition.

\begin{proposition}
    \label{prop:ugly_integral}
        Following the same notation as before, for $z$ inside $\sigma$ the following holds

\begin{equation}
   \frac{1}{2\pi i} \int_\sigma \frac{\log\left(\frac{1 - e^{-n_2\alpha\beta J^\nu_{c_0,c_1}(s)}}{ 1- e^{-n_1\alpha\beta}J^\nu_{c_0,c_1}(s)}\right)}{s-z}ds = \log\left(\frac{1 - e^{-n_2\alpha\beta}J^\nu_{c_0,c_1}(z)}{1 - e^{-n_1\alpha\beta}J^\nu_{c_0,c_1}(z)}\right)  + \log\left(\frac{s_1 - z}{s_2-z}\right)
\end{equation}
where $s_1 = J^{-1}_{c_0,c_1}\left(e^{\frac{n_1\alpha\beta}{\nu}}\right), s_2 =  J^{-1}\left(e^{\frac{n_2\alpha\beta}{\nu}}\right)$, and both $s_1,s_2$ are inside $\sigma$, and $s_1\geq s_2$. 
    \end{proposition}
    \begin{proof} 
        We notice that the function $\displaystyle\log\left(\frac{1 - e^{-n_2\alpha\beta J^\nu_{c_0,c_1}(s)}}{ 1- e^{-n_1\alpha\beta}J^\nu_{c_0,c_1}(s)}\right)$ has a brunch-cut along the segment $(s_2,s_1)$ where $s_1 = J^{-1}_{c_0,c_1}\left(e^{\frac{n_1\alpha\beta}{\nu}}\right)$, $s_2 = J^{-1}_{c_0,c_1}\left(e^{\frac{n_2\alpha\beta}{\nu}}\right)$ are the unique preimage in the interior of the curve $\sigma$. Therefore, by residue calculation, one deduce the following:

        \begin{equation}
        \begin{split}
           \frac{1}{2\pi i}  \int_\sigma \frac{\log\left(\frac{1 - e^{-n_2\alpha\beta}J^\nu_{c_0,c_1}(s)}{1 - e^{-n_1\alpha\beta}J^\nu_{c_0,c_1}(s)}\right)}{s-z}ds & = \log\left(\frac{1 - e^{-n_2\alpha\beta}J^\nu_{c_0,c_1}(z)}{1 - e^{-n_1\alpha\beta}J^\nu_{c_0,c_1}(z)}\right) \\ &\quad + \frac{1}{\pi}\int_{s_1}^{s_2} \frac{\Arg_+(e^{n_2\alpha\beta} -  J^\nu_{c_0,c_1}(s)) - \Arg_+( e^{n_1\alpha\beta} - J^\nu_{c_0,c_1}(s))}{s-z} ds \\& = \log\left(\frac{1 - e^{-n_2\alpha\beta}J^\nu_{c_0,c_1}(z)}{1 - e^{-n_1\alpha\beta}J^\nu_{c_0,c_1}(z)}\right)  + \log\left(\frac{s_1 - z}{s_2-z}\right)\,.
        \end{split}
        \end{equation}
    \end{proof}
\begin{figure}
    \centering

\tikzset{every picture/.style={line width=0.75pt}} 

\begin{tikzpicture}[x=0.75pt,y=0.75pt,yscale=-0.75,xscale=0.75]

\draw [fill={rgb, 255:red, 126; green, 211; blue, 33 }  ,fill opacity=0.5 ]   (39.67,149.67) .. controls (39.67,-27) and (259.67,99) .. (259.67,151) ;
\draw [fill={rgb, 255:red, 208; green, 2; blue, 27 }  ,fill opacity=0.5 ]   (41,149.67) .. controls (41,326.33) and (261,203) .. (261,151) ;
\draw [shift={(261,151)}, rotate = 270] [color={rgb, 255:red, 0; green, 0; blue, 0 }  ][fill={rgb, 255:red, 0; green, 0; blue, 0 }  ][line width=0.75]      (0, 0) circle [x radius= 3.35, y radius= 3.35]   ;
\draw [shift={(41,149.67)}, rotate = 90] [color={rgb, 255:red, 0; green, 0; blue, 0 }  ][fill={rgb, 255:red, 0; green, 0; blue, 0 }  ][line width=0.75]      (0, 0) circle [x radius= 3.35, y radius= 3.35]   ;
\draw  [color={rgb, 255:red, 126; green, 211; blue, 33 }  ,draw opacity=0 ][fill={rgb, 255:red, 208; green, 2; blue, 27 }  ,fill opacity=0.5 ] (411.33,149.33) -- (470.33,21.73) -- (620.73,150.6) -- cycle ;
\draw    (70.87,150.33) -- (130.87,150.33) ;
\draw [shift={(130.87,150.33)}, rotate = 0] [color={rgb, 255:red, 0; green, 0; blue, 0 }  ][fill={rgb, 255:red, 0; green, 0; blue, 0 }  ][line width=0.75]      (0, 0) circle [x radius= 3.35, y radius= 3.35]   ;
\draw [shift={(70.87,150.33)}, rotate = 0] [color={rgb, 255:red, 0; green, 0; blue, 0 }  ][fill={rgb, 255:red, 0; green, 0; blue, 0 }  ][line width=0.75]      (0, 0) circle [x radius= 3.35, y radius= 3.35]   ;
\draw    (171.27,149.67) -- (231.27,149.67) ;
\draw [shift={(231.27,149.67)}, rotate = 0] [color={rgb, 255:red, 0; green, 0; blue, 0 }  ][fill={rgb, 255:red, 0; green, 0; blue, 0 }  ][line width=0.75]      (0, 0) circle [x radius= 3.35, y radius= 3.35]   ;
\draw [shift={(171.27,149.67)}, rotate = 0] [color={rgb, 255:red, 0; green, 0; blue, 0 }  ][fill={rgb, 255:red, 0; green, 0; blue, 0 }  ][line width=0.75]      (0, 0) circle [x radius= 3.35, y radius= 3.35]   ;
\draw    (411.33,149.33) -- (470.33,23) ;
\draw  [dash pattern={on 4.5pt off 4.5pt}]  (147,106.33) .. controls (212.67,16.33) and (257,30.33) .. (325.67,30.33) .. controls (393.99,30.33) and (444.49,19.11) .. (515.92,97.15) ;
\draw [shift={(517,98.33)}, rotate = 227.77] [color={rgb, 255:red, 0; green, 0; blue, 0 }  ][line width=0.75]    (10.93,-3.29) .. controls (6.95,-1.4) and (3.31,-0.3) .. (0,0) .. controls (3.31,0.3) and (6.95,1.4) .. (10.93,3.29)   ;
\draw [color={rgb, 255:red, 0; green, 0; blue, 0 }  ,draw opacity=0 ][fill={rgb, 255:red, 208; green, 2; blue, 27 }  ,fill opacity=0.5 ]   (41,149.67) -- (261,151) ;
\draw  [color={rgb, 255:red, 126; green, 211; blue, 33 }  ,draw opacity=0 ][fill={rgb, 255:red, 126; green, 211; blue, 33 }  ,fill opacity=0.5 ] (411.33,149.33) -- (470.33,275.67) -- (620.73,150.6) -- cycle ;
\draw [fill={rgb, 255:red, 139; green, 56; blue, 56 }  ,fill opacity=0.84 ]   (411.33,149.33) -- (470.33,275.67) ;
\draw [shift={(411.33,149.33)}, rotate = 64.97] [color={rgb, 255:red, 0; green, 0; blue, 0 }  ][fill={rgb, 255:red, 0; green, 0; blue, 0 }  ][line width=0.75]      (0, 0) circle [x radius= 3.35, y radius= 3.35]   ;
\draw    (456,149.38) -- (530.99,150.06) ;
\draw [shift={(530.99,150.06)}, rotate = 0.52] [color={rgb, 255:red, 0; green, 0; blue, 0 }  ][fill={rgb, 255:red, 0; green, 0; blue, 0 }  ][line width=0.75]      (0, 0) circle [x radius= 3.35, y radius= 3.35]   ;
\draw [shift={(456,149.38)}, rotate = 0.52] [color={rgb, 255:red, 0; green, 0; blue, 0 }  ][fill={rgb, 255:red, 0; green, 0; blue, 0 }  ][line width=0.75]      (0, 0) circle [x radius= 3.35, y radius= 3.35]   ;
\draw    (570.2,150.6) -- (620.73,150.6) ;
\draw [shift={(620.73,150.6)}, rotate = 0] [color={rgb, 255:red, 0; green, 0; blue, 0 }  ][fill={rgb, 255:red, 0; green, 0; blue, 0 }  ][line width=0.75]      (0, 0) circle [x radius= 3.35, y radius= 3.35]   ;
\draw [shift={(570.2,150.6)}, rotate = 0] [color={rgb, 255:red, 0; green, 0; blue, 0 }  ][fill={rgb, 255:red, 0; green, 0; blue, 0 }  ][line width=0.75]      (0, 0) circle [x radius= 3.35, y radius= 3.35]   ;

\draw (68.87,153.07) node [anchor=north east] [inner sep=0.75pt]    {$-1$};
\draw (128.87,153.07) node [anchor=north east] [inner sep=0.75pt]    {$0$};
\draw (169.27,153.07) node [anchor=north east] [inner sep=0.75pt]    {$s_{2}$};
\draw (229.27,153.07) node [anchor=north east] [inner sep=0.75pt]    {$s_{1}$};
\draw (37.67,153.07) node [anchor=north east] [inner sep=0.75pt]    {$s_{a}$};
\draw (261.67,154.4) node [anchor=north west][inner sep=0.75pt]    {$s_{b}$};
\draw (325.67,33.73) node [anchor=north] [inner sep=0.75pt]    {$J_{c_{0} ,c_{1}}$};
\draw (409.33,152.73) node [anchor=north east] [inner sep=0.75pt]    {$0$};
\draw (454,152.78) node [anchor=north east] [inner sep=0.75pt]    {$a$};
\draw (528.33,154.4) node [anchor=north east] [inner sep=0.75pt]    {$b$};
\draw (570.2,154) node [anchor=north] [inner sep=0.75pt]    {$J_{c_{0} ,c_{1}}( s_{1})$};
\draw (620.73,147.2) node [anchor=south] [inner sep=0.75pt]    {$J_{c_{0} ,c_{1}}( s_{2})$};

\end{tikzpicture}

    \caption{Contours of integration for Proposition \ref{prop:ugly_integral}}
    \label{fig:ungly_integral}
\end{figure}
The previous proposition allows us to compute the function $N(s)$ explicitly as 

    \begin{equation}
        N(z) =\begin{cases} -1 -m_1 + \frac{\nu}{\kappa\alpha\beta}\left(\log\left(\frac{1 - e^{-n_2\alpha\beta}J^\nu(z)}{1 - e^{-n_1\alpha\beta}J^\nu(z)}\right)  + \log\left(\frac{s_1 - z}{s_2-z}\right) \right) \qquad &z \text{ inside } \sigma\\
1 - \frac{\nu}{\kappa\alpha\beta} \log\left(\frac{s_1 - z}{s_2-z}\right) \qquad  &z \text{ outside } \sigma
\end{cases}\,.
    \end{equation}
We notice that, in general, the previous function \textit{does not} satisfy the conditions $N(0)=\nu$ and $N(-1)=0$, therefore, we need to impose some conditions on $c_0,c_1$ to make sure that these two conditions are satisfied. In particular, we need to impose the following system of equations:

\begin{equation}
    \begin{cases}
        \nu = N(0) = -1-m_1- \frac{\nu}{\kappa}(n_2-n_1) + \frac{\nu}{\kappa\alpha\beta}\log\left(\frac{s_1}{s_2}\right) \\
        0 = N(-1) = -1-m_1 + \frac{\nu}{\kappa\alpha\beta}\log\left(\frac{s_1+1}{s_2+1}\right) \\
        J^\nu(s_1) = e^{n_1\alpha\beta}\,,\qquad J^\nu(s_2) = e^{n_2\alpha\beta}
    \end{cases}\,.
\end{equation}
With tears, one can explicitly compute $s_1,s_2$ as follows:

      \begin{equation}
        \label{eq:A_B_sub}
            s_1 = A\left(\frac{B-1}{A-B}\right)\,,\; s_2 = \frac{B-1}{A-B}\,,\;
            A  = \exp\left[ \alpha\beta\kappa \left( \nu + 1 + m_1 + \frac{\nu}{\kappa}(n_2-n_1) \right) \right],\;
            B = \exp\left[ \frac{\alpha\beta\kappa}{\nu}(1 + m_1) \right]\,,
      \end{equation}
      and then $c_0,c_1$ as 

      \begin{equation}
        c_1 = \frac{K_1 - K_2}{s_1 - s_2}\,,\; c_0 = \frac{K_2 s_1 - K_1 s_2}{s_1 - s_2}\,,\;K_1 =  e^{n_1\alpha\beta} \left(  \frac{A(B-1)}{B(A-1)} \right)^  {\frac{1}{\nu}}\,,\;K_2 = e^{n_2\alpha\beta} \left( \frac{B-1}{A-1} \right) ^{\frac{1}{\nu}}\,,
    \end{equation}
we notice that the two previous equations coincides with \eqref{eq:s_val_sub}-\eqref{eq:c_val_sub} respectively.
We can now recover the equilibrium measure using the properties of $G_\nu(z)$ as follows:
\begin{equation}
\label{eq:measure_case1}
\omega(x) = \lim_{\varepsilon\to0^+}\frac{1}{2\pi i}\left( G_\nu(x-i\varepsilon) - G_\nu(x+i\varepsilon)\right) = \frac{1}{\pi\beta\rho\kappa x}\Arg\left(\frac{s_1 - I_+(x)}{s_2 - I_+(x)}\right)\mathds{1}_{x\in(a,b)}
\end{equation}
Furthermore, the equilibrium density $\omega(x)\leq \frac{1}{\rho\kappa \beta x}$, therefore the upper constraint is respected, and $\Arg\left(\frac{s_1 - I_+(x)}{s_2 - I_+(x)}\right)\geq 0$. The only caveat is that we are assuming that both $s_1,\,s_2$ are inside $\sigma$, therefore, we must restrict to the case where

\begin{equation}
    s_1\leq s_b\,,
\end{equation}
which is equivalent to \eqref{eq:b_sub}. This concludes the proof of the first theorem.

\begin{remark}
    We notice that we do not consider the case $n_2=n_1$ because in this case we expect the solution to blow-up at the hard-edge $e^{-\beta \rho (\gamma^2-\kappa)}$, therefore the upper constraint would be violated, and we need to consider a different ansatz for the solution, which is the content of the next section.
\end{remark}

\begin{remark}
    The proof of Theorem \ref{thm:sub_crit_2} is analogous, the only difference is that the function $U(s)$ becomes  
\begin{equation}
    U(s) = -m_1 + \frac{1}{\kappa\alpha\beta}\log\left(\frac{1- e^{-n_2\alpha\beta}J_{c_0,c_1}(s)}{1- e^{-n_1\alpha\beta}J_{c_0,c_1}(s)}\right)\,.
\end{equation}
Therefore, one has to obtain an analogous version of Proposition \ref{prop:ugly_integral} where we know consider $z$ \textbf{outside} of $\sigma$.
\end{remark}

\subsection{The supercritical regime}
\label{subsec:above_critical_regime}

We now consider the supercritical regime, which is when the construction in the previous section breaks down, which happens when $s_1>s_b\,$.
As before, we split the proof for the two model problems, but in this case we also have to consider a special subcase: $n_1=n_2$.
The main result of this section are the following: 
\begin{theorem}
        \label{thm:above_crit}
      In the same notation as in Corollary \ref{cor:intro_identification} and assume that $n_1\neq n_2$; consider the model problem \ref{mod_prob_1}, let $I^{\pm}$  be the inverse of $J_{c_0,c_1}(s)$ on $\sigma_\pm$ respectively. Defining

      \begin{equation}
            s_1 = A\left(\frac{B-1}{A-B}\right)\,,\; s_2 = \frac{B-1}{A-B}\,,\;
            A  = \exp\left[ \frac{\alpha\beta\kappa}{\nu} \left( \nu + 1 + m_1 + \frac{\nu}{\kappa}(n_2-n_1) \right) \right],\;
            B = \exp\left[ \frac{\alpha\beta\kappa}{\nu} (1 + m_1) \right]\,,
      \end{equation}
      and setting 

      \begin{equation}
        c_1 = \frac{K_1 - K_2}{s_1 - s_2}\,,\; c_0 = \frac{K_2 s_1 - K_1 s_2}{s_1 - s_2}\,,\;K_1 = \left[ e^{n_1\alpha\beta} \left(\frac{A(B-1)}{B(A-1)} \right) \right]^  {\frac{1}{\nu}}\,,\;K_2 = \left[ e^{n_2\alpha\beta} \left( \frac{B-1}{A-1} \right) \right]^{\frac{1}{\nu}}\,,
    \end{equation}
    if 
    \begin{equation}
        \label{eq:b_critical}
        s_1 \geq s_b = -\frac{\nu-1}{2\nu} + \frac{1}{2\nu c_1}\sqrt{4c_0c_1\nu + c_1^2(\nu-1)^2} \,,
    \end{equation}
    then the equilibrium measure $\omega_\nu(\di x)\equiv  \omega_\nu(x)\di x$ has the following density

    \begin{equation}
        \omega_\nu(x) = \begin{cases}\frac{1}{\pi \beta \rho\kappa x}\Arg\left(\frac{s_1-I_+(x)}{s_2-I_+(x)}\right) \qquad & x\in (a,b) \\
        \frac{1}{\beta \rho\kappa x} \qquad & x\in (b,x_{\max}) \\
        \end{cases}\,,
    \end{equation}
where $a=J_{c_0,c_1}(s_a), b=J_{c_0,c_1}(s_b)$. 
    \end{theorem}

    An analogous result holds for the model problem \ref{mod_prob_2}

    \begin{theorem}
        \label{thm:above_crit_2}
      In the same notation as in Corollary \ref{cor:intro_identification} and assume that $n_1\neq n_2$; consider the model problem \ref{mod_prob_2}, let $I^{\pm}$  be the inverse of $J_{c_0,c_1}(s)$ on $\sigma_\pm$ respectively. Defining

      \begin{equation}
        \label{eq:s_val_above_2}
            s_1 = A\left(\frac{B-1}{A-B}\right)\,\; s_2 = \frac{B-1}{A-B}\,,\;
            A  = \exp\left[ \alpha\beta\kappa \left( \nu + 1 + m_1 + \frac{1}{\kappa}(n_2-n_1) \right) \right],\;
            B = \exp\left[ \alpha\beta\kappa(1 + m_1) \right]
      \end{equation}
      and setting 

      \begin{equation}
        \label{eq:c_val_above_2}
        c_1 = \frac{K_1 - K_2}{s_1 - s_2}\,,\; c_0 = \frac{K_2 s_1 - K_1 s_2}{s_1 - s_2}\,,\;K_1 =  e^{n_2\alpha\beta} \left(  \frac{A(B-1)}{B(A-1)} \right)^  {\frac{1}{\nu}}\,,\;K_2 = e^{n_1\alpha\beta} \left( \frac{B-1}{A-1} \right) ^{\frac{1}{\nu}}\,,
    \end{equation}
    if 
    \begin{equation}
        s_1 \geq s_b = -\frac{\nu-1}{2\nu} + \frac{1}{2\nu c_1}\sqrt{4c_0c_1\nu + c_1^2(\nu-1)^2} \,,
    \end{equation}
    then the equilibrium measure $\omega_1(\di x)\equiv  \omega_1(x)\di x$ has the following density

    \begin{equation}
        \omega_1(x) = \begin{cases} \frac{1}{\pi \beta \kappa\rho x}\Arg\left(\frac{s_1-I_+(x)}{s_2-I_+(x)}\right) &\qquad x\in(a,b) \\
        \frac{1}{\beta \kappa \rho x} &\qquad x \in (b,x_{\max})\end{cases}\,,
    \end{equation}
    where $a=J_{c_0,c_1}(s_a), b=J_{c_0,c_1}(s_b)$.
    \end{theorem}

Furthermore, in the specific situation where $n_1=n_2$, we prove the following 

\begin{theorem}
    \label{thm:above_crit_case2}
    In the same notation as in Corollary \ref{cor:intro_identification} and assuming that $n_1 = n_2$; consider the model problem \ref{mod_prob_1} or \ref{mod_prob_2}, let $I^{\pm}$  be the inverse of $J_{c_0,c_1}(s)$ on $\sigma_\pm$ respectively. Defining

      \begin{equation}
        \label{eq:s_val_above_2_1}
            s_1 = A\left(\frac{B-1}{A-B}\right)\,,\; s_2 = \frac{B-1}{A-B}\,,\;
            A  = \exp\left[ \alpha\beta\kappa \left( \nu + 1 + m_1  \right) \right],\;
            B = \exp\left[ \alpha\beta\kappa(1 + m_1) \right]
      \end{equation}
      and setting 

      \begin{equation}
        \label{eq:c_val_above_2_1}
        c_1 = \frac{K_1 - K_2}{s_1 - s_2}\,,\; c_0 = \frac{K_2 s_1 - K_1 s_2}{s_1 - s_2}\,,\;K_1 =  e^{n_1\alpha\beta} \left(  \frac{A(B-1)}{B(A-1)} \right)^  {\frac{1}{\nu}}\,,\;K_2 = e^{n_1\alpha\beta} \left( \frac{B-1}{A-1} \right) ^{\frac{1}{\nu}}\,,
    \end{equation}
    then the equilibrium measure $\omega_1(\di x)\equiv  \omega_1(x)\di x = \omega_\nu(\di x)\equiv  \omega_\nu(x)\di x$ has the following density
     \begin{equation}
        \omega_\nu(x) =\omega_1(x)= \begin{cases}
            \frac{1}{\pi \beta\rho\kappa x}\left(\Arg\left(\frac{s_1 - I^+(x)}{ s_2 - I^+(x)}\right) \right)\quad & x\in (a,b) \\
            \frac{1}{\beta\rho\kappa x} \quad & x\in (b,1)
        \end{cases}\,.
    \end{equation}
    where $a=J_{c_0,c_1}(s_a), b=J_{c_0,c_1}(s_b)$.
\end{theorem}

\begin{remark}
    We notice that, we state the result with the explicit extreme $1$ for the support of the measure since the only cases where $n_1=n_2$ are the cases where $\xi=1-\gamma^2$, which implies that $\gamma^2=\kappa$, and therefore the support of the measure is $(0,1)$.
\end{remark}

The strategy of proof is slightly different from the previous case. For Theorem \ref{thm:above_crit}, we consider a different ansatz for the equilibrium measure. Specifically, our guess is that the equilibrium measure has a saturated region $\mathfrak{I}_+ =\left(b,e^{-\beta \rho (\gamma^2-\kappa)}\right)$ where the upper constraint is active, and a gap region $\mathfrak{I}_-=(0,a)$ where the measure is zero, and a band $\frak{I}_0=(a,b)$ where the measure is strictly positive and below the upper constraint. 

As in the previous section, since the proof of the first two results are analogous, we will just give the proof of Theorem \ref{thm:above_crit}, and we will just point out the differences in the proof of Theorem \ref{thm:above_crit_2}. Regarding Theorem \ref{thm:above_crit_case2}, we present a full proof.

\subsubsection{Proof of Theorem \ref{thm:above_crit}}

To keep the notation simple, we drop the subscript $\nu$ in the following proof, and we just write $\omega(dx)$ instead of $\omega_\nu(dx)$.
Following the mentioned heuristic, the solution of the minimization problem satisfies the following Euler-Lagrange equations

{
    \mathtoolsset{showonlyrefs=false}
    \begin{subequations}
        \label{eq:EL_constraint_2}
        \begin{align}
	    &\int_0^{\xm} \ln(|x-y|)\omega(y)dy+\int_0^{\xm} \ln(|x^\nu-y^\nu|)\omega(y)dy  + V(x) = \ell \quad x\in \mathfrak{I}_0 \,, \\
		&\int_0^{\xm} \ln(|x-y|)\omega(y)dy+\int_0^{\xm} \ln(|x^\nu-y^\nu|)\omega(y)dy + V(x) \geq \ell \quad x\in \mathfrak{I}_+\,,\\
        &\int_0^{\xm}\ln(|x-y|)\omega(y)dy+\int_0^{\xm} \ln(|x^\nu-y^\nu|)\omega(y)dy + V(x) \leq \ell \quad x\in \mathfrak{I}_-\,.
    \end{align}
    \end{subequations}

}
Where $\xm=e^{-\rho\beta(\gamma^2-\kappa)}$.
Given this ansatz, following the same notation as in the previous section, we can recast the previous E-L equations in the following RHP for the functions $(g_1(z),g_\nu(z))$:

\begin{rhp}
		\label{rhp:g_small_final}
		\textbf{for $(g_1(z),g_\nu(z))$}
		
			\begin{enumerate}[label=\alph*.]
			\item $(g_1(z),g_\nu(z))$ are analytic in $(\C\setminus[a,\xm],\mathbb{H}_\nu\setminus[a,\xm])$
			\item $g_\nu(e^{-i\frac{\pi}{\nu}}x) = g_\nu(e^{i\frac{\pi}{\nu}}x) - 2\pi i$ for $x>0$ and $g_1^+(x) = g_1^-(x) + 2\pi i $ for $x<0$
			\item$g_\nu^+(x) +  g_1^-(x) = g_1^+(x) +  g_\nu^-(x)=\ell -V(x)$ for $x\in(a,b)$
            \item $g_1(z) =  \ln(z) + O(z^{-1})$ as $z\to\infty$ in $\mathbb{C}\setminus[a,\xm]$
			\item $g_\nu(z) = \nu \ln(z) + O(z^{-\nu})$ as $z\to\infty$ in $\mathbb{H}_\nu\setminus[a,\xm]$
   \item $g_1^+(x)-g_1^-(x)=g_\nu^+(x)-g_\nu^-(x)=2\pi i\int_x^{\xm}\omega(y)dy$ for $x\in(a,b)$
   \item $g_1^+(x)-g_1^-(x)=g_\nu^+(x)-g_\nu^-(x)=\frac{2\pi i}{\beta\delta} \int_x^{\xm} \frac{\di y}{y}$ for $x\in(b,\xm)$
		\end{enumerate}
	\end{rhp}

 Proceeding as before in Subsection \ref{subsec:sub_crit}, we consider the same chain of transformation, leading to the following RHP for $N(s)$ 
\begin{rhp}
	\label{rhp:final_measure_2}
	\textbf{for $N(s)$}

\begin{enumerate}[label=\alph*.]
	\item $N(s)$ are analytic in $\mathbb{C}\setminus\{\sigma\cup(q_1,s_1)\}$
	\item $N^+(s) + N^-(s) = -J_{c_0,c_1}(s)V'(J_{c_0,c_1}(s)) = U(s)$ for $s\in \sigma\setminus\{s_{a},s_{b}\}$
	\item $N(0) = \nu$, $N(-1)=0$
	\item $\lim\limits_{s\to \infty} N(s) = 1$
  \item $N^+(s) -N^-(s) = -\frac{2\pi i}{\beta\delta}$ for $s\in(s_{b}, s_1)$
   \item $N^+(s) -N^-(s) = \frac{2\pi i}{\beta\delta}$ for $s\in(q_1,s_{b})$
\end{enumerate}
\end{rhp}
where
\begin{equation}
    \label{eq:q_val_above}
    J_{c_0,c_1}(q_1) = J_{c_0,c_1}(s_1)=\xm\,,
\end{equation}
in particular, we chose $q_1$ to be the root inside $\sigma$, and $s_1$ the root outside $\sigma$, so that $s_a<q_1\leq s_b\leq s_1$. In figure \ref{fig:contour_nuts} one can see an example of the contour for the previous RHP.

\begin{figure}[ht]
    \centering

\tikzset{every picture/.style={line width=0.75pt}} 

\begin{tikzpicture}[x=0.75pt,y=0.75pt,yscale=-0.7,xscale=0.7]

\draw    (29.01,241.88) .. controls (29.22,196.38) and (37.7,163.72) .. (63.96,144.12) .. controls (82.24,130.47) and (109.14,123.15) .. (147.87,122.24) .. controls (265.63,124.46) and (293.07,182.15) .. (293.07,241.23) ;
\draw [shift={(33.6,194.52)}, rotate = 284.18] [fill={rgb, 255:red, 0; green, 0; blue, 0 }  ][line width=0.08]  [draw opacity=0] (8.93,-4.29) -- (0,0) -- (8.93,4.29) -- cycle    ;
\draw [shift={(98.25,128.4)}, rotate = 346.36] [fill={rgb, 255:red, 0; green, 0; blue, 0 }  ][line width=0.08]  [draw opacity=0] (8.93,-4.29) -- (0,0) -- (8.93,4.29) -- cycle    ;
\draw [shift={(244.67,144.75)}, rotate = 32.68] [fill={rgb, 255:red, 0; green, 0; blue, 0 }  ][line width=0.08]  [draw opacity=0] (8.93,-4.29) -- (0,0) -- (8.93,4.29) -- cycle    ;
\draw [shift={(29.01,241.88)}, rotate = 270.27] [color={rgb, 255:red, 0; green, 0; blue, 0 }  ][fill={rgb, 255:red, 0; green, 0; blue, 0 }  ][line width=0.75]      (0, 0) circle [x radius= 3.35, y radius= 3.35]   ;
\draw    (29.01,241.88) .. controls (29.6,288.46) and (38.85,321.43) .. (67.19,340.46) .. controls (85.5,352.75) and (111.77,359.22) .. (148.8,359.79) .. controls (266.55,356.64) and (293.54,299.92) .. (293.08,241.23) ;
\draw [shift={(37.69,301.5)}, rotate = 251.87] [fill={rgb, 255:red, 0; green, 0; blue, 0 }  ][line width=0.08]  [draw opacity=0] (8.93,-4.29) -- (0,0) -- (8.93,4.29) -- cycle    ;
\draw [shift={(111.53,356.66)}, rotate = 190.58] [fill={rgb, 255:red, 0; green, 0; blue, 0 }  ][line width=0.08]  [draw opacity=0] (8.93,-4.29) -- (0,0) -- (8.93,4.29) -- cycle    ;
\draw [shift={(254.61,330.71)}, rotate = 144.57] [fill={rgb, 255:red, 0; green, 0; blue, 0 }  ][line width=0.08]  [draw opacity=0] (8.93,-4.29) -- (0,0) -- (8.93,4.29) -- cycle    ;
\draw    (70.14,241.46) -- (120.43,241.46) ;
\draw [shift={(120.43,241.46)}, rotate = 0] [color={rgb, 255:red, 0; green, 0; blue, 0 }  ][fill={rgb, 255:red, 0; green, 0; blue, 0 }  ][line width=0.75]      (0, 0) circle [x radius= 3.35, y radius= 3.35]   ;
\draw [shift={(70.14,241.46)}, rotate = 0] [color={rgb, 255:red, 0; green, 0; blue, 0 }  ][fill={rgb, 255:red, 0; green, 0; blue, 0 }  ][line width=0.75]      (0, 0) circle [x radius= 3.35, y radius= 3.35]   ;
\draw    (192.51,241.25) -- (242.79,241.25) ;
\draw [shift={(242.79,241.25)}, rotate = 0] [color={rgb, 255:red, 0; green, 0; blue, 0 }  ][fill={rgb, 255:red, 0; green, 0; blue, 0 }  ][line width=0.75]      (0, 0) circle [x radius= 3.35, y radius= 3.35]   ;
\draw [shift={(192.51,241.25)}, rotate = 0] [color={rgb, 255:red, 0; green, 0; blue, 0 }  ][fill={rgb, 255:red, 0; green, 0; blue, 0 }  ][line width=0.75]      (0, 0) circle [x radius= 3.35, y radius= 3.35]   ;
\draw    (242.79,241.25) -- (268.5,241.25) -- (293.08,241.25) ;
\draw [shift={(293.08,241.25)}, rotate = 0] [color={rgb, 255:red, 0; green, 0; blue, 0 }  ][fill={rgb, 255:red, 0; green, 0; blue, 0 }  ][line width=0.75]      (0, 0) circle [x radius= 3.35, y radius= 3.35]   ;
\draw [shift={(242.79,241.25)}, rotate = 0] [color={rgb, 255:red, 0; green, 0; blue, 0 }  ][fill={rgb, 255:red, 0; green, 0; blue, 0 }  ][line width=0.75]      (0, 0) circle [x radius= 3.35, y radius= 3.35]   ;
\draw    (293.08,241.25) -- (340,241.08) -- (384.21,240.93) ;
\draw [shift={(384.21,240.93)}, rotate = 359.8] [color={rgb, 255:red, 0; green, 0; blue, 0 }  ][fill={rgb, 255:red, 0; green, 0; blue, 0 }  ][line width=0.75]      (0, 0) circle [x radius= 3.35, y radius= 3.35]   ;
\draw [color={rgb, 255:red, 0; green, 0; blue, 0 }  ,draw opacity=1 ]   (469.65,100.85) -- (517.63,100.85) -- (565.07,100.85) ;
\draw [shift={(565.07,100.85)}, rotate = 0] [color={rgb, 255:red, 0; green, 0; blue, 0 }  ,draw opacity=1 ][fill={rgb, 255:red, 0; green, 0; blue, 0 }  ,fill opacity=1 ][line width=0.75]      (0, 0) circle [x radius= 3.35, y radius= 3.35]   ;
\draw [shift={(469.65,100.85)}, rotate = 0] [color={rgb, 255:red, 0; green, 0; blue, 0 }  ,draw opacity=1 ][fill={rgb, 255:red, 0; green, 0; blue, 0 }  ,fill opacity=1 ][line width=0.75]      (0, 0) circle [x radius= 3.35, y radius= 3.35]   ;
\draw [color={rgb, 255:red, 0; green, 0; blue, 0 }  ,draw opacity=1 ]   (565.07,100.85) -- (592.5,100.5) -- (620.5,100.5) ;
\draw [shift={(620.5,100.5)}, rotate = 0] [color={rgb, 255:red, 0; green, 0; blue, 0 }  ,draw opacity=1 ][fill={rgb, 255:red, 0; green, 0; blue, 0 }  ,fill opacity=1 ][line width=0.75]      (0, 0) circle [x radius= 3.35, y radius= 3.35]   ;
\draw [color={rgb, 255:red, 0; green, 0; blue, 0 }  ,draw opacity=1 ]   (443.67,100.25) -- (464.67,30.25) ;
\draw [color={rgb, 255:red, 0; green, 0; blue, 0 }  ,draw opacity=1 ]   (464.67,169.25) -- (443.67,100.25) ;
\draw [color={rgb, 255:red, 0; green, 0; blue, 0 }  ,draw opacity=1 ]   (468.48,419) -- (524.5,419) -- (563.89,419) ;
\draw [shift={(563.89,419)}, rotate = 0] [color={rgb, 255:red, 0; green, 0; blue, 0 }  ,draw opacity=1 ][fill={rgb, 255:red, 0; green, 0; blue, 0 }  ,fill opacity=1 ][line width=0.75]      (0, 0) circle [x radius= 3.35, y radius= 3.35]   ;
\draw [shift={(468.48,419)}, rotate = 0] [color={rgb, 255:red, 0; green, 0; blue, 0 }  ,draw opacity=1 ][fill={rgb, 255:red, 0; green, 0; blue, 0 }  ,fill opacity=1 ][line width=0.75]      (0, 0) circle [x radius= 3.35, y radius= 3.35]   ;
\draw [color={rgb, 255:red, 0; green, 0; blue, 0 }  ,draw opacity=1 ]   (563.89,419) -- (590,419.35) -- (618.8,419.74) ;
\draw [shift={(618.8,419.74)}, rotate = 0.78] [color={rgb, 255:red, 0; green, 0; blue, 0 }  ,draw opacity=1 ][fill={rgb, 255:red, 0; green, 0; blue, 0 }  ,fill opacity=1 ][line width=0.75]      (0, 0) circle [x radius= 3.35, y radius= 3.35]   ;
\draw  [dash pattern={on 3.75pt off 3pt on 7.5pt off 1.5pt}]  (122.5,159.25) .. controls (107.21,113.64) and (197.81,101.08) .. (287.03,99.18) .. controls (357.46,97.69) and (413.25,100.37) .. (427.8,100.93) ;
\draw [shift={(429.75,101)}, rotate = 181.83] [color={rgb, 255:red, 0; green, 0; blue, 0 }  ][line width=0.75]    (10.93,-3.29) .. controls (6.95,-1.4) and (3.31,-0.3) .. (0,0) .. controls (3.31,0.3) and (6.95,1.4) .. (10.93,3.29)   ;
\draw  [dash pattern={on 3.75pt off 3pt on 7.5pt off 1.5pt}]  (184.5,397) .. controls (193.68,400.67) and (277.18,411.89) .. (324.57,414.79) .. controls (370.54,417.59) and (427.19,414.17) .. (447.7,413.09) ;
\draw [shift={(449.5,413)}, rotate = 177.1] [color={rgb, 255:red, 0; green, 0; blue, 0 }  ][line width=0.75]    (10.93,-3.29) .. controls (6.95,-1.4) and (3.31,-0.3) .. (0,0) .. controls (3.31,0.3) and (6.95,1.4) .. (10.93,3.29)   ;

\draw (287.03,96.78) node [anchor=south] [inner sep=0.75pt]  [color={rgb, 255:red, 0; green, 0; blue, 0 }  ,opacity=1 ]  {$J_{c_{0} ,c_{1}} \ :\ D\setminus [ -1,0] \ \rightarrow \mathbb{H}_{\nu } \setminus [ a ,b]$};
\draw (324.57,418.19) node [anchor=north] [inner sep=0.75pt]  [color={rgb, 255:red, 0; green, 0; blue, 0 }  ,opacity=1 ]  {$J_{c_{0} ,c_{1}} \ :\ \mathbb{C} \setminus \overline{D} \ \rightarrow \mathbb{C} \setminus [ a,b]$};
\draw (27.01,245.28) node [anchor=north east] [inner sep=0.75pt]    {$s_{a}$};
\draw (295.07,244.63) node [anchor=north west][inner sep=0.75pt]    {$s_{b}$};
\draw (190.51,244.65) node [anchor=north east] [inner sep=0.75pt]    {$s_{2}$};
\draw (386.21,244.33) node [anchor=north west][inner sep=0.75pt]    {$s_{1}$};
\draw (72.14,245.26) node [anchor=north west][inner sep=0.75pt]    {$-1$};
\draw (122.43,245.26) node [anchor=north west][inner sep=0.75pt]    {$0$};
\draw (471.65,104.25) node [anchor=north west][inner sep=0.75pt]    {$a$};
\draw (567.07,104.25) node [anchor=north west][inner sep=0.75pt]    {$b$};
\draw (470.48,422.4) node [anchor=north west][inner sep=0.75pt]    {$a$};
\draw (565.89,422.4) node [anchor=north west][inner sep=0.75pt]    {$b$};
\draw (61.96,140.72) node [anchor=south east] [inner sep=0.75pt]    {$( 1)$};
\draw (65.96,147.52) node [anchor=north west][inner sep=0.75pt]    {$( 2)$};
\draw (69.19,337.06) node [anchor=south west] [inner sep=0.75pt]    {$( 3)$};
\draw (65.19,343.86) node [anchor=north east] [inner sep=0.75pt]    {$( 4)$};
\draw (268.5,237.85) node [anchor=south] [inner sep=0.75pt]    {$( 5)$};
\draw (268.5,244.65) node [anchor=north] [inner sep=0.75pt]    {$( 6)$};
\draw (340,237.68) node [anchor=south] [inner sep=0.75pt]    {$( 7)$};
\draw (340,244.48) node [anchor=north] [inner sep=0.75pt]    {$( 8)$};
\draw (517.63,104.25) node [anchor=north] [inner sep=0.75pt]    {$( 2)$};
\draw (517.63,97.45) node [anchor=south] [inner sep=0.75pt]    {$( 3)$};
\draw (592.5,97.1) node [anchor=south] [inner sep=0.75pt]    {$( 6)$};
\draw (592.5,103.9) node [anchor=north] [inner sep=0.75pt]    {$( 5)$};
\draw (524.5,415.6) node [anchor=south] [inner sep=0.75pt]    {$( 1)$};
\draw (524.5,422.4) node [anchor=north] [inner sep=0.75pt]    {$( 4)$};
\draw (590,422.75) node [anchor=north] [inner sep=0.75pt]    {$( 8)$};
\draw (590,415.95) node [anchor=south] [inner sep=0.75pt]    {$( 7)$};
\draw (240.79,244.65) node [anchor=north east] [inner sep=0.75pt]    {$q_{1}$};
\draw (622.5,103.9) node [anchor=north west][inner sep=0.75pt]    {$x_{\max}$};
\draw (620.8,423.14) node [anchor=north west][inner sep=0.75pt]    {$x_{\max}$};
\draw (287,165.4) node [anchor=north west][inner sep=0.75pt]    {$\sigma $};

\end{tikzpicture}

    \caption{The contour in the supercritical regime}
    \label{fig:contour_nuts}
\end{figure}
Since the potential $V(x)$ did not change from the previous case, also the function $U(s)$ remains the same, and it is given by

\begin{equation}
    U(s)= -m_1 + \frac{1}{\rho\kappa\beta}\log\left(\frac{1- e^{-n_2\alpha\beta}J^\nu(s)}{1- e^{-n_1\alpha\beta}J^\nu(s)}\right)\,.
\end{equation}
Applying Proposition \ref{prop:ugly_integral}, one can explicitly solve the previous RHP as follows

\begin{equation}
    N(s) = \begin{cases}
         1 - \frac{1}{\rho\kappa\beta}\log\left(\frac{q_1-s}{s_2-s}\right) - \frac{1}{\beta\rho\kappa}\ln\left(\frac{s-s_1}{s-q_1}\right) \qquad &\text{outside } \sigma\\
        -1 - m_1 + \frac{1}{\rho\kappa\beta}\left[ \log\left(\frac{1- e^{-n_2\alpha\beta}J^\nu(s)}{1- e^{-n_1\alpha\beta}J^\nu(s)}\right) + \log\left(\frac{q_1-s}{s_2-s}\right) \right] + \frac{1}{\beta\rho\kappa}\ln\left(\frac{s-s_1}{s-q_1}\right)\qquad &\text{inside } \sigma\\
    \end{cases}
\end{equation}
where 
\begin{equation}
    \label{eq:s_val_above}
    J^\nu_{c_0,c_1}(q_1) = e^{n_1\alpha\beta} = x_{\max}\,, \quad J^\nu_{c_0,c_1}(s_2) = e^{n_2\alpha\beta}\,.
\end{equation}
In particular, the previous solution can be rewritten as follows:

\begin{equation}
    \label{eq:simple_N}
    N(s) = \begin{cases}
         1 - \frac{1}{\rho\kappa\beta}\log\left(\frac{s_1-s}{s_2-s}\right) \qquad &\text{outside } \sigma\\
        -1 - m_1 + \frac{1}{\rho\kappa\beta}\left[ \log\left(\frac{1- e^{-n_2\alpha\beta}J^\nu(s)}{1- e^{-n_1\alpha\beta}J^\nu(s)}\right) + \log\left(\frac{s_1-s}{s_2-s}\right) \right] \qquad &\text{inside } \sigma\\
    \end{cases}
\end{equation}
As in the previous case, the function $N(s)$ does not always respect the constraint c. of the RHP, therefore, we must impose that:

\begin{equation}
 \label{eq:normalization_above_crit}
    \begin{cases}
        \nu = N(0) = -1-m_1- \frac{\nu}{\kappa}(n_2-n_1) + \frac{1}{\rho\kappa\beta}\log\left(\frac{s_1}{s_2}\right)  \\
        0 = N(-1) = -1-m_1 + \frac{1}{\rho\kappa\beta}\log\left(\frac{s_1+1}{s_2+1}\right) \\
    \end{cases}\,.
\end{equation}
We notice that we have exactly $4$ parameters $c_0,c_1,s_1,s_2$ and $4$ independent equations \eqref{eq:q_val_above}-\eqref{eq:s_val_above}-\eqref{eq:normalization_above_crit}, where the only restriction is that $s_1$ has to be outside $\sigma$ and $s_2$ inside. Setting $s_1,s_2,c_0,c_1$  as in \eqref{eq:intro_constants_1} one can solve the previous system explicitly.
Finally, we can explicitly compute density $\omega(x)$ as follows:

    \begin{equation}
        \omega(x) = \begin{cases}
            \frac{1}{\pi \beta\rho\kappa x}\Arg\left(\frac{s_1 - I^+(x)}{ s_2 - I^+(x)}\right) \quad & x\in (a,b) \\
            \frac{1}{\beta\rho\kappa x} \quad & x\in (b,\xm)
        \end{cases}\,.
    \end{equation}
To conclude our proof of Theorem \ref{thm:above_crit}, we need to show that the E-L equations \eqref{eq:EL_constraint_2} are satisfied by the equilibrium measure. Define the function $h(x)$ as follows:

\begin{equation}
    h(x) = G_1^+(x)+G_\nu^-(x) + V'(x)\,.
\end{equation}
We constructed the equilibrium measure such that $h(x)=0$ on $(a,b)$, by direct calculation one can show that $h'(x)<0$ for $x\in(0,a)$ and $h(x)>0$ for $x\in(b,\xm)$, therefore by integration the E-L equations \eqref{eq:EL_constraint_2} are satisfied. This concludes the proof of Theorem \ref{thm:above_crit}.

\begin{remark}
    The proof of Theorem \ref{thm:above_crit_2} follows the same lines, the only difference is that in this case the function $U(s)$ is given by
\begin{equation}
    U(s) = -m_1 + \frac{1}{\rho\kappa\beta}\log\left(\frac{1- e^{-n_2\alpha\beta}J_{c_0,c_1}(s)}{1- e^{-n_1\alpha\beta}J_{c_0,c_1}(s)}\right)\,.
\end{equation}
\end{remark}

\subsection{Proof of Theorem \ref{thm:above_crit_case2}}

We can follow exactly the same steps as in the previous case, the only difference is that in this case the function $U(s)$ is given by
\begin{equation}
    U(s) = -m_1\,.
\end{equation}
Therefore, we can solve the RHP \ref{rhp:final_measure_2} for $N(s)$ as follows:

\begin{equation}
    N(s)=\begin{cases}
        1 - \frac{1}{\beta\rho\kappa}\ln\left(\frac{s-s_1}{s-q_1}\right) & \quad s \text{ outside } \sigma\\
        -1 -m_2 + \frac{1}{\beta\rho\kappa}\ln\left(\frac{s-s_1}{s-q_1}\right) &\quad s \text{ inside } \sigma\\
    \end{cases}\,.
\end{equation}
Then, we must impose the two conditions in c., therefore 

\begin{equation}
    \begin{cases}
        -1-m_1 +\frac{1}{\beta\rho\kappa}\ln\left(\frac{s_1 }{q_1}\right) = \nu \\[2pt]
        -1-m_1 + \frac{1}{\beta\rho\kappa}\ln\left(\frac{1+s_1 }{1+q_1}\right)=0
    \end{cases}
\end{equation}
Relabelling $q_1$ as $s_2$, one can show that the previous system of equation can be solved as in \eqref{eq:s_val_above_2_1}-\eqref{eq:c_val_above_2_1}. Furthermore, one can compute the equilibrium measure as

\begin{equation}
\begin{split}
        \omega(x) & = -\frac{G^+_1(x) - G^-_1(x)}{2\pi i} = - \frac{N^-(I^+(x))-N^-(I^-(x))}{2\pi ix}\\
        & = \begin{cases}
            \frac{1}{\pi \beta\rho\kappa x}\Arg\left(\frac{s_1 - I^+(x)}{ s_2 - I^+(x)}\right)\quad & x\in (a,b) \\
            \frac{1}{\beta\rho\kappa x} \quad & x\in (b,1)
        \end{cases}\,.
\end{split}
\end{equation}
Finally, by direct computations one verifies that \eqref{eq:EL_constraint_2} are satisfied.

\bibliographystyle{siam}
\bibliography{ldp4lpp.bib}

\end{document}